\documentclass[11pt]{article}
\usepackage[a4paper,margin=2.4cm]{geometry}

\usepackage[utf8]{inputenc}
\usepackage[english]{babel}

\usepackage[colorinlistoftodos,prependcaption]{todonotes}

\usepackage{amsmath}
\usepackage{enumerate}
\usepackage{amsthm,amssymb}
\usepackage{amscd}
\usepackage{color}
\usepackage{hyperref}
\usepackage{fancyhdr}
\usepackage{bigints}

\usepackage{caption}

\usepackage{array}
\usepackage{makecell}

\usepackage{array}

\usepackage{scalerel}

\setlength{\headheight}{14pt}

\pagestyle{myheadings}
%\pagestyle{fancy}
%\fancyhf{}

\markright{Edge fluctuations for random normal matrices}

%\lhead{Edge fluctuations for Coulomb gases}

 \newenvironment{msc}
 {\par \noindent\textit{2010 MSC:}\ \ignorespaces}
 {\par\vspace{3mm}}

\newenvironment{keywords}
 {\par \noindent\textit{Keywords:}\ \ignorespaces}
 {\par\vspace{3mm}} 

\newcommand{\lambdap}{
\lambda_{\scaleto{+}{4.5pt}}
}

\newcommand{\lambdam}{
\lambda_{\scaleto{-}{4.5pt}}
}

\newcommand{\lp}{l_{\scaleto{+}{4.5pt}}}
\newcommand{\lm}{l_{\scaleto{-}{4.5pt}}}
\newcommand{\lpp}{l_{\scaleto{+}{3.5pt}}}
\newcommand{\lmm}{l_{\scaleto{-}{3.5pt}}}

\newcommand{\itemEq}[1]{%
         \begingroup%
     \setlength{\abovedisplayskip}{-3pt}%
     \setlength{\belowdisplayskip}{-3pt}%
         \parbox[c]{\linewidth}{\begin{flalign}#1&&\end{flalign}}%
         \endgroup}

\newcommand{\itemEqNoLabel}[1]{%
         \begingroup%
     \setlength{\abovedisplayskip}{-3pt}%
     \setlength{\belowdisplayskip}{-3pt}%
         \parbox[c]{\linewidth}{\begin{flalign*}#1&&\end{flalign*}}%
         \endgroup}

\newtheorem{theorem}{Theorem}[section]
\newtheorem{proposition}[theorem]{Proposition}
\newtheorem{lemma}[theorem]{Lemma}

\newtheorem{remark}[theorem]{Remark}
\newtheorem{definition}[theorem]{Definition}

\title{Edge fluctuations for random normal matrix ensembles}

\author{David García-Zelada}
\date{}

\begin{document}

\maketitle

\begin{abstract}
%We study the fluctuations 
%of the maxima of some classes
%of two-dimensional radial
%determinantal Coulomb gases.
%Different behaviors
%are given when the 
%equilibrium measure is compactly supported.
%This includes exponential fluctuations
%at quadratic speed and
%Gumbel fluctuations
%at a variety of speeds.
%We also obtain
%the limiting kernel
%of certain two-dimensional radial
%determinantal Coulomb gases
%at the origin and at the unit circle.
%The method used is very simple, the
%main technical aspect being
%Lebesgue's dominated convergence theorem.
A famous result going back to Eric Kostlan states that the moduli of the eigenvalues of random normal matrices with radial potential are independent yet non identically distributed. This phenomenon is at the heart of the asymptotic analysis of the edge, and leads in particular to the Gumbel fluctuation of the spectral radius when the potential is quadratic. In the present work, we show that a wide variety of laws of fluctuation are possible, beyond the already known cases, including for instance Gumbel and exponential laws at unusual speeds. We study the convergence in law of the spectral radius as well as the limiting point process at the edge. Our work can also be seen as the asymptotic analysis of the edge of two-dimensional determinantal Coulomb gases and the identification of the limiting kernels.
\end{abstract}

\begin{msc}
\small
60G55; 60F05; 60K35; 60G70
\end{msc}

\begin{keywords} 
\small
random normal matrix; 
interacting particle system; extreme values;
determinantal point process;
Gibbs measure; 
Coulomb gas.
\end{keywords}

\vspace{3mm}

\hrule

\section{Introduction}
We will be interested in the fluctuations
of the maxima of the moduli
of the eigenvalues of random normal matrices
confined by a radial potential. Equivalently,
these eigenvalues may be thought of as a
two-dimensional radial determinantal
Coulomb gas as defined
in \eqref{eq:CoulombGasDensity} below. 
The purpose of this article is two-fold.
On the one hand,
we want to 
present different classes
of universality 
by describing the subtleties
there may appear. In this way, we show
a variety of
possibilities emerging,
each possibility having different interesting
aspects. On the other hand, we want
to convey the simplicity of the methods,
which extend far
beyond the examples mentioned in this article. 
In particular, the local limiting point process
of any two-dimensional radial determinantal
Coulomb gas 
can be easily understood
as well as the
local limiting point process
of the norms. Furthermore,
the same methods also help one study the local 
limiting covariance kernel for random polynomials
such as the ones considered
in \cite{ButezGZ}.
I would like to also mention that
the regularities required for the potential $V$
are minimal and that this makes the proofs
even clearer by not looking
at the unimportant aspects of $V$.

Let us mention some of the previous
work on the fluctuations of the maxima.
The first result we are aware of
is the Gumbel fluctuations
at speed $\sqrt{n \log n}$
obtained by Rider \cite{Rider}
for the farthest particle of the Ginibre ensemble.
A generalization of this result
can be found in the work of Chafai and 
Péché \cite{ChafaiPeche} 
as well as in the recent work of
Ameur, Kang and Seo 
\cite{Ameur}
with the same speed 
$\sqrt{n \log n}$.
The hard-edge version has been
considered by Seo \cite{Seo}
who showed exponential fluctuations
at speed $n$.
On a series of articles,
\cite{JiangQi,GuiQi,ChangLiQi}, Qi and his
collaborators
have studied different
cases related to matrix models
which includes truncated circular unitary
matrices and products of matrices
from the spherical
and from the Ginibre ensemble. 
A class of potentials generated
by probability measures
has been recently considered by
Butez and the author in
\cite{ButezGZ}. Fluctuations
of the maxima for Coulomb gases have
also attracted physicists attention as we
can see, for instance, in
the work of Lacroix-A-Chez-Toine, Grabsch, Majumdar
and Schehr
\cite{Grabsch} where even an intermediate
deviation regime is explored. 
The farthest particle has also
been of interest for
fermionic systems
as in the work of Dean,
Le Doussal, Majumdar and Schehr \cite{Dean}.
Despite these efforts, the variety
of behaviors has not been really explored
and the methods used
up to this point have not been simple enough.

We now proceed to describe the model. 
Let $\mathcal N_n$
denote the set of $n$ by $n$ normal matrices
endowed with the measure $\mathrm d M$ 
induced by the restriction (to the regular part of
$\mathcal N_n$)
of the euclidean inner product 
$\langle M, N \rangle = \mathrm{Tr}(M N^*)$
on the space of $n$ by $n$ matrices.
We will be interested in the measure
\begin{equation}
\label{eq:NormalMatrixLaw}
e^{-Q(M)} \mathrm d M,
\end{equation}
for some continuous function 
$Q:\mathbb C \to \mathbb R$.
By \cite{NormalMatrixPhysics,NormalMatrices},
if \eqref{eq:NormalMatrixLaw}
has finite measure and if
 $M$ follows a law proportional to \eqref{eq:NormalMatrixLaw}, the 
eigenvalues of $M$ (symmetrically ordered)
follow a law proportional to the measure given by
$\prod_{i<j}^n |x_i - x_j|^2 
e^{-\sum_{i=1}^n Q(x_i)}
{\rm d}\ell_{ \mathbb C^n}(x_1,\dots,x_n),$
where $\ell_{\mathbb C^n}$
is the Lebesgue measure on $\mathbb C^n$.
This measure has the following interesting
interpretation in classical electrostatics.
Since we are interested in the case where $Q$
is radial,
we fix a continuous function
$V: [0,\infty) \to \mathbb R$
and consider
a system of $n$
particles in $\mathbb C$
of the same charge $q$ and confined
by the potential $V$. 
This system, known as a Coulomb gas, has the energy
\[H(x_1,\dots,x_n)
= - q^2 \sum_{i<j}^n\log|x_i - x_j| 
+    q  \sum_{i=1}^n V(|x_i|) \]
and follows
the Gibbs probability measure 
at inverse temperature $\beta>0$ and energy $H$, i.e.
the probability measure
proportional to $e^{-\beta H}
\mathrm d \ell_{\mathbb C^n}$.
The case related to random normal matrices
is the one where
$\beta q^2 = 2$ since
in this case the 
Gibbs probability measure 
is given
by
\begin{equation}
\label{eq:CoulombGasDensity}
\mathrm d \mathbb P_n^\kappa
(x_1,\dots,x_n) =\frac{1}{\mathcal Z}
 \prod_{i<j}^n |x_i - x_j|^2 
e^{-2\kappa \sum_{i=1}^n V(|x_i|)}
{\rm d}\ell_{ \mathbb C^n}(x_1,\dots,x_n)
\end{equation}
where $\kappa = 1/q$ and where $\mathcal Z$
is a normalization constant.
Then, the eigenvalues
of a random normal matrix 
that follows a law proportional to
\eqref{eq:NormalMatrixLaw} 
with $Q(z) = 2\kappa V(|z|)$ can be thought of
as a system of particles in $\mathbb C$ 
of charge $1/\kappa$
and at inverse temperature $\beta = 2\kappa^2$.
For
$\mathbb P_n^\kappa$
to be well-defined
(i.e., $\mathcal Z < \infty$)
we shall assume that
 $\kappa > n$ and that
%\begin{equation}
%\label{eq:GeneralConfiningCondition}
\[\liminf_{r \to \infty} 
\left\{V(r) - \log r \right\} > - \infty.\]
%\end{equation}
To relax
the condition $\kappa > n$
we can assume that
the potential is 
\emph{strongly confining}, i.e.,
\begin{equation}
\label{eq:StronglyConfiningCondition}
\liminf_{r \to \infty} 
\frac{V(r)}{\log r} >1
\end{equation}
or, equivalently,
$\liminf_{r \to \infty} 
\left\{V(r) - Q\log r \right\} >
-\infty$ for some 
$Q>1$.
If \eqref{eq:StronglyConfiningCondition}
is not satisfied we say that the potential
is \emph{weakly confining}.
We shall also consider the
degenerate case where $V = \infty$
outside a disk which, for simplicity, 
we take it to be the 
closed unit disk centered at zero
$\overline {\mathbb D}$.
These are examples of
\emph{hard-edge} systems
and they are determined by a
continuous function
$V:[0,1] \to \mathbb R$ 
and a positive number $\kappa$. The Gibbs 
probability measure would be
\begin{equation}
\label{eq:CoulombGasDensityHardEdge}
\frac{1}{\tilde{\mathcal Z}}
 \prod_{i<j}^n |x_i - x_j|^2 
e^{-2\kappa \sum_{i=1}^n V(|x_i|)}
{\rm d}\ell_{ \overline {\mathbb D}}(x_1)\dots
{\rm d}\ell_{ \overline {\mathbb D}}(x_n),
\end{equation}
where $\ell_{\overline {\mathbb D}}$ denotes
the Lebesgue measure restricted to
$\overline {\mathbb D}$. It may be thought of
as a particular case
of \eqref{eq:CoulombGasDensity}
where we let $V(r) = \infty$ for
$r>1$.

By well-known large deviation
principles
(see, for instance, 
\cite[Theorem 1.2]{ChafaiJung}
for an idea of the proof), if $(x_1^{(n)},
\dots,x_n^{(n)}) \sim 
\mathbb P_n^{\kappa_n}$
and $\kappa_n/n 
\xrightarrow[n \to \infty]{} 1$
(and under some conditions on $\kappa_n$
in the weakly confining case)
we have that
%\begin{equation}
%\label{eq:2ASConvergence}
\[\frac{1}{n}\sum_{i=1}^n \delta_{x_i^{(n)}}
\xrightarrow[n \to \infty]{\mathrm{a.s.}} \mu_V\]
%\end{equation}
where $\mu_V$ is the probability
measure
that minimizes
the functional
\[\nu \mapsto 
\int_{\mathbb C \times \mathbb C}
\left(\log \frac{1}{|x-y|}
+ V(|x|) + V(|y|) \right)
\mathrm d \nu(x)
\, \mathrm d \nu(y).\]
Finally,
let us say some words about the
laws \eqref{eq:CoulombGasDensity} and 
\eqref{eq:CoulombGasDensityHardEdge}.
The exponent $2$ in $\prod_{i<j}^n |x_i - x_j|^2$
makes the system quite feasible to study. It 
enjoys the property
of being a \textit{determinantal point process},
which will be further explained
in Section \ref{sec:determinantal}.
This structure 
allows us to find limits of point processes
by studying the limit of a function of
two variables. Some examples of limits are given
in Subsection \ref{subAp:KernelGumbel}
and in Proposition \ref{prop:HardEdgeKernelLimit}.
Furthermore,
in the study of the moduli
$(|x_1^{(n)}|,\dots,|x_n^{(n)}|)$,
an idea due to Kostlan \cite[Lemma 1.4]{Kostlan}
shows that, if 
$Y_0^{(n)},\dots,Y_{n-1}^{(n)}$
are independent random variables
such that $Y_k^{(n)}$ has a density proportional to
$r^{2k+1}e^{-2\kappa V(r) }$, then
\[\{|x_1^{(n)}|,\dots,|x_n^{(n)}|\}
\sim \{Y_0^{(n)},\dots,Y_{n-1}^{(n)}\}\]
as point processes on $[0,\infty)$. 
A more general statement will
be recalled in Subsection \ref{sub:Kostlan}.
Interestingly, the asymptotic analysis
of the moduli then becomes
an asymptotic analysis of independent and non-identically distributed random variables.

%This model may be interesting
%in itself due to its
%relation
%to the basic classical
%model of electrostatics in
%two dimensions.
%It has also many connections 
%to probability and
% modern physics 
%such as random matrix models,
%the fractional quantum Hall effect
%and the Ginzburg-Landau model.
%We refer to \cite{coulomb-serfaty} for 
%further motivations.

%The subject of Coulomb gases
%appears also in the 
%fractional quantum Hall effect
%and the Ginzburg-Landau model.
%We refer to \cite{coulomb-serfaty} for 
%further information on general Coulomb gases.
%We now proceed to describe the content of each
%section.
Let us describe the content of each
section.

In Section \ref{sec:FarthestParticles} we present the main results.
We consider
 potentials whose equilibrium measures
are compactly supported and
we show that 
different behaviors 
for the farthest particles may arise.
In Subsection \ref{sub:Coulombian} 
we consider potentials
for which the farthest particle
keeps the Coulombian behavior. This means
that the limit point process at the scale considered
is not Poissonian and is still related
to a determinantal point process.
The potentials involved
may be either strongly confining
or weakly confining, i.e., either
\eqref{eq:StronglyConfiningCondition}
is true or not.
 In Subsection \ref{sub:Gumbel}
we consider potentials for which a Gumbel
fluctuation appears.
One of the cases shown is
a class of strongly confining potentials
and the other one is a class
of weakly confining potentials
with the parameter $\kappa$ 
growing fast to infinity.
In Subsection \ref{sub:Exponential}
we consider
a hard-edge potential for which
exponential fluctuations at speed $n^2$ 
are obtained. 

In Section \ref{sec:determinantal}
we give a short introduction to the
determinantal structure we need
together with the property
that allows us to describe the 
cumulative distribution function
of the maxima of the moduli in an
explicit way.
It is
the generalization of a
property discovered by Kostlan
and rediscovered, for instance, by
Fyodorov and Mehlig
\cite[Equation 16]{Fyodorov}.

The proofs are divided in three sections.
Section
\ref{sec:Minima}, \ref{sec:ProofsGumbel} and
\ref{sec:ProofExponential} contain
the proofs
stated in Subsection \ref{sub:Coulombian},
 \ref{sub:Gumbel} and 
 \ref{sub:Exponential} respectively.

Finally, Section 
\ref{sec:Appendices} contains four
appendices with further information the reader
might find interesting.
Subsection \ref{subAp:RandomNumber}
contains
a version of
Theorem \ref{th:MaxFiniteParticles}
that is longer to state
and where a 
random limiting number of particles appears. 
Subsection \ref{subAp:Hard-edgeAlphaCase}
contains the $\alpha>1$ version of Theorem
\ref{th:HardEdge}. 
Subsection \ref{subAp:KernelGumbel}
contains the limiting kernel at the edge
of some systems described
in Subsection \ref{sub:Gumbel} together
with a remark about the relation between
the limiting kernel and the Gumbel fluctuations.
Subsection \ref{subAp:GumbelConnectionWeakly}
explains an interesting connection between
the Gumbel distribution and the limit
of the maxima from
Theorem \ref{th:MaxAnnulus}.

\section{Behaviors of the farthest
particles}
\label{sec:FarthestParticles}
In this article,
we will be interested in the
case where
$\mu_V$ is compactly supported.
Without loss of generality,
 we can assume that
\begin{equation}
\label{eq:SupportCondition}
\partial \mathbb D \subset 
\mathrm{supp}\, \mu_V
\subset \overline{ \mathbb D},
\end{equation}
where $\mathbb D$ is the open
unit disk centered at zero.
Recall we suppose that
 $V:[0,\infty) \to \mathbb R$,
or $V:[0,1] \to \mathbb R$ in the hard-edge case,
is continuous. Using
 Frostman's conditions 
(see
\cite{Saff}), the condition in
\eqref{eq:SupportCondition} 
traduces,
by adding a constant to $V$,
in the following properties
for the potential.
\begin{definition}[Standard properties]
\label{def:StandardProperties}
A continuous function
$V:[0,\infty) \to \mathbb R$ or
$V:[0,1] \to \mathbb R$ is said to satisfy
the \emph{standard properties} if
\begin{itemize}
\item $V(r) > \log r
\quad \mbox{ when } r < 1, $
\item $V(r) \geq \log r
\quad \mbox{ when } r > 1, $ and
\item $V(1) = 0$.
\end{itemize}
\end{definition}
Every $V$ is assumed to satisfy these properties
throughout the article. Notice that the second condition
is vacuous for $V:[0,1] \to \mathbb R$,
or we may say it is always true
if we define $V(r)=\infty$ for $r>1$.
Although
this second condition will always be implied
by the other conditions in the theorems, it
is important to keep it in mind. Given
a sequence $\{\kappa_n\}_{n\geq 1} $
we consider a random element
$(x_1^{(n)},\dots,x_n^{(n)})
\sim \mathbb P_n^{\kappa_n}$, where
$\mathbb P_n^{\kappa_n}$ is defined by
\eqref{eq:CoulombGasDensity} or \eqref{eq:CoulombGasDensityHardEdge},
and study 
\[M_n = \max \left(|x_1^{(n)}|,
\dots,|x_n^{(n)}| \right). \]
More precisely,
we look for two sequences of real numbers
$\{\alpha_n\}_{n \geq 1}$
and $\{\beta_n\}_{n \geq 1}$ such that
\[\alpha_n M_n +
\beta_n  \quad 
\mbox{converges in law}\]
to a non-deterministic 
random variable. 
We will see that the behavior
of $M_n$
depends on the behavior of 
$V$ with respect to $\log r$.
It will also be seen to depend on the asymptotic
behavior of 
$\kappa_n$ or, equivalently, on
the behavior of the charge
of a single particle $1/\kappa_n$.
%\[q_n= \frac{1}{\kappa_n}.  \]
This dependence will
be explained by
five examples. 

\begin{itemize}
\item In Theorem \ref{th:MaxAnnulus},
$M_n$ converges to a random variable
supported in a closed interval
inside $[1,\infty)$. This interval
depends on the behavior of $V$
in $[1,\infty)$ but is otherwise arbitrary. 
For instance, the interval could be bounded away
from $1$ in which case an infinite number of particles
stay far from the support of $\mu_V$.
Moreover, the potential can be strongly
confining, in the sense of
\eqref{eq:StronglyConfiningCondition},
while the farthest particles stay 
living in an annulus.
\item In Theorem
\ref{th:MaxFiniteParticles}, $M_n$ converges
to infinity. After a proper rescaling,
$M_n$ is seen to converge to a non-deterministic random
variable. Interestingly,
at the same scale there are only
a finite number of particles that accompany 
the one of modulus $M_n$.
	\item Theorem \ref{th:Strongly} shows 
Gumbel fluctuations
arising when the potential is strongly confining.
This is stated for a family
of behaviors of $V(r)$ near $r=1$.
There is a compatibility,
stated in Remark \ref{rem:GumbelAndEdge},
between the Gumbel fluctuations and
the limiting
point process at the edge
described in Proposition
\ref{prop:StronglyKernelLimit}.
	\item Theorem \ref{th:WeaklyKappa} shows
Gumbel fluctuations
when the potential is still weakly confining
but $\kappa_n - n \xrightarrow[]{} \infty$.
In this case, there is no longer
a compatibility
between the Gumbel fluctuations
and the limiting point process
at the edge, described in
Proposition \ref{prop:WeaklyKernelLimit}.

\item 
Theorem \ref{th:HardEdge}
treats the hard-edge case.
The parameter for
the exponential fluctuations
are shown
to depend on the potential
near the edge but, in the particular family
of potentials described,
it also depends on what happens inside of
the unit disk.
These exponential fluctuations
are obtained by using the limiting kernel
at the edge from
Proposition \ref{prop:HardEdgeKernelLimit}.

\end{itemize}

%Some unexpected
%behaviors will be seen but the methods
%will always involve a simple
%analysis of the kernel.
%We emphasize again
%that the simplicity of the
%methods allows plenty
%of lack of regularity for $V$.
There are two further 
complementary classes of potentials described
in Theorem \ref{th:MaxFiniteParticlesRandom}
and Theorem \ref{th:HardEdgeAlpha} in the appendix.
To complement the previous explanation, 
we would like to give some examples
implied by the theorems.
Unless otherwise stated,
$V(r) > \log r$ for $r \neq 1$. 

We begin by giving some
non-Poissonian examples. We call them 
Coulombian examples because
they are related
to some limits of determinantal point processes. 
These limiting point processes
may be thought of as instances of
Coulomb gases
with an infinite number of particles.
In the left-most column of Table \ref{tab:Coulomb}
some conditions on the potential are stated.
In the second column, 
speed means the coefficient $\alpha_n$
necessary to obtain the convergence in law 
of $\alpha_n M_n$. Finally,
the right-most column tells us
the cumulative distribution function of the
limit of $\alpha_n M_n$. For simplicity,
we have chosen $\kappa_n = n + \chi$ as the inverse charge
for $\chi > 0$.
{\small
\\
$\left. \right.$
\\
\begin{minipage}{\linewidth}
\renewcommand{\arraystretch}{1.3}
\begin{center} \captionsetup{type=figure}
\captionof{table}{Coulombian examples} \label{tab:Coulomb} 
\begin{tabular}{|m{15em}||c|c|}
		\hline
		Conditions & Speed & Cumulative distribution
		function \\
		\hline \hline
		\thead{$\left. \right.$ \\$V(r) = \log r$ 
		only on $[R,\infty)$\\
		$\left. \right.$ }  & 1 & 
		$\displaystyle\prod_{k=0}^\infty 
	\left( 
1 - \left(\frac{R}{t}\right)^{2k+2\chi}
	 \right)$\\
	 \hline
		\thead{$\left. \right.$ \\ $V(r) = \log r$ 
		only on $[1,R]$\\
		$\left. \right.$ } & $1$ & 
		$\displaystyle
		\prod_{k=0}^\infty 
		\left(\frac{1- t^{-2k-2\chi} }{1-
		R^{-2k-2\chi} } \right)$
		\\
		\hline
		\thead{$V(r) = \log r
		+ \frac{1}{2} r^{-\alpha}
		+ o(r^{-\alpha}) $,\\
		$V(r) = \log r + 
		|1-r| + o(|1-r|)$,	\\
		$\ell : =
		\alpha/2 - \chi  \notin \mathbb Z$} 
		& $n^{-1/\alpha}$ & $\displaystyle \prod_{k=0}^
				{\lfloor \ell
				\rfloor}
	\frac{\Gamma
	 \left( 
	 \frac{2(k+\chi)}{\alpha},
	  t^{-\alpha} \right)
	}
	{\Gamma
	 \left( 
	 \frac{2(k+\chi)}{\alpha} 
	\right)}$ \\
		\hline 
		\thead{$V(r) = \log r
		+ \frac{1}{2} r^{-\alpha}
		+ o(r^{-\alpha}) $, \\
		$V(r) = \log r + 
		|1-r| + o(|1-r|)$,	\\
		$\ell:=\alpha/2 - \chi  \in \mathbb Z$} 
		& $n^{-1/\alpha}$ & $\displaystyle
	\left(
	e^{- t^{-\alpha}}\! \!
	+
	(1-e^{- t^{-\alpha}})
	\frac{\alpha}{1+\alpha}
	\right)
				\prod_{k=0}^
				{ \ell
				- 1
				}
	\frac{\Gamma
	 \left( 
	 \frac{2(k+\chi)}{\alpha},
	 t^{-\alpha} \right)
	}
	{\Gamma
	 \left( 
	 \frac{2(k+\chi)}{\alpha} 
	\right)} $ \\
		\hline 
	\end{tabular}\par
\end{center}
\end{minipage}
\\ 
$\left. \right.$
\\}
The first and second examples are particular
cases of Theorem \ref{th:MaxAnnulus}.
The third example is a consequence
of Theorem \ref{th:MaxFiniteParticles}
while the fourth example is a consequence of
the random number of particles version stated
in
Theorem \ref{th:MaxFiniteParticlesRandom}
in the appendix.

Next, we give two examples of Gumbel fluctuations.
More precisely, in those cases there exist
$\alpha_n$ and $\beta_n$ such that
$\alpha_n M_n + \beta_n$
converges to a random variable
whose cumulative distribution function
is
$t \in \mathbb R \mapsto e^{-e^{-t}}$.
The left-most column 
of Table \ref{tab:Gumbel} contains the conditions on
the potential,
the second column contains the conditions on 
the inverse charge $\kappa_n$,
and the right-most column contains
the speed $\alpha_n$.
{\small
\\
$\left. \right.$
\\
\begin{minipage}{\linewidth}
\renewcommand{\arraystretch}{1.3}
\begin{center} \captionsetup{type=figure}
\captionof{table}{Gumbel examples} \label{tab:Gumbel}
	\begin{tabular}{|m{22em}||c|c|}
		\hline
		Conditions & Inverse charge & Speed \\
		\hline 
		\hline 
		\thead{$V(r)=\log r +
		\frac{1}{2\alpha} |1-r|^\alpha
		+ o(|1-r|)^{\alpha+\varepsilon} $ \\}
		& $\kappa_n
		= n + o\left(
		\frac{n}{\log n} \right)^{1/\alpha}$
		& $n^{1/\alpha}(\log n)^{1-1/\alpha}$ \\
	\hline
	\thead{$V(r)= \log r +
		\frac{1}{2\alpha} (1-r)^\alpha
		+ o(1-r)^{\alpha} $ 
		as $r \to 1^{-}$ \\ and 
		$V(r) = \log r$ if $r \geq 1$}
		& $\kappa_n - n \to \infty$
		& $2(\kappa_n -n )$ \\
	\hline
	\end{tabular}\par
\end{center}
\end{minipage}
\\ 
$\left. \right.$
\\}
The first example is a particular case
of Theorem \ref{th:Strongly}
while the second example
 is implied by
Theorem \ref{th:WeaklyKappa}.

Finally, we give two examples of exponential 
fluctuations. More precisely,
in those cases
there exists $\alpha_n$ such that $\alpha_n (1-M_n)$
converges to a random variable
whose cumulative distribution function
is
$t \in [0,\infty) \mapsto 1-e^{-at}$ for some $a>0$.
The left-most column of
Table \ref{tab:Expo} contains the conditions on
the potential,
the second column contains the conditions on 
the inverse charge $\kappa_n$,
and the right-most column tells us if the parameter
$a$
of the exponential
 depends only on the conditions of the potential
stated on the first column or not.
\\
$\left. \right.$
\\ {\small
\begin{minipage}{\linewidth}
\renewcommand{\arraystretch}{1.3}
\begin{center} \captionsetup{type=figure}
\captionof{table}{Exponential examples} \label{tab:Expo}
	\begin{tabular}{|m{21.7em}||c|c|c|}
		\hline
		Conditions & Inverse charge & 
		Speed & Parameter \\
	\hline
	\thead{$V(r)=\log r +  (1-r) + o(1-r)$ 
		as $r \to 1^{-}$		\\
		and $V(r) = \infty$ if $r > 1$}
		& No condition
		& $n^2$ & 
	\thead{Depends on  \\ $V$ 
	inside $[0,1)$}
		\\
	\hline
	\thead{$V(r)=\log r +  
	\frac{1}{2\alpha}(1-r)^\alpha + o(1-r)$ 
		as $r \to 1^{-}$		\\
		and $V(r) = \infty$ if $r > 1$}
		& $\kappa_n = n + o(n^{1/\alpha})$
		& $n^{2/\alpha}$ & Universal \\
	\hline
	\end{tabular}\par
\end{center}
\end{minipage}
\\ 
$\left. \right.$
\\}
Recall that $V(r) = \infty$ if $r>1$
means we are dealing with
the hard-edge case and $V$ is actually
a continuous function on $[0,1]$.
The first example of Table \ref{tab:Expo} 
can be found in
Theorem \ref{th:HardEdge}
while the second example is found in
Theorem \ref{th:HardEdgeAlpha}
in the appendix.

\begin{remark} [Equivalent statement]
In every theorem we will 
have some
 $R \in [-\infty,\infty)$
and some $\alpha_n$ and $\beta_n$ such that
$\{\alpha_n|x_1^{(n)}| + \beta_n,\dots,
\alpha_n|x_n^{(n)}|+\beta_n\} \cap (R,\infty)$ 
converges to some
point process $\mathcal X$
together with the convergence of 
 $\alpha_n M_n + \beta_n$
towards 
the maximum of $\mathcal X$. Nevertheless, both
together imply and are implied by 
the following statement. For every 
$f: \mathbb R \to  \mathbb R$
whose support is contained in
$(R,\infty)$,
 \[
 \sum_{i=1}^n f(\alpha_n |x_i^{(n)}| + \beta_n)
 \xrightarrow[n \to \infty]{\mathrm{law}}
 \sum_{x \in \mathcal X} f(x).\]
\end{remark}

\subsection{Coulombian behavior}
\label{sub:Coulombian}

In both of the theorems presented here 
$V(r) - \log r$ touches zero 
away from $r = 1$ and in a non trivial way.
Their proofs, given in Section
\ref{sec:Minima}, are motivated
by the methods used in \cite{ButezGZ}.
Even 
though the proofs will give
the behavior of the point process
$\{x_1^{(n)},\dots,x_n^{(n)}\}$,
the theorems will only be stated
for the point process of the moduli
$\{|x_1^{(n)}|,\dots,|x_n^{(n)}|\}$.
The behavior of the point process
$\{x_1^{(n)},\dots,x_n^{(n)}\}$
may be obtained from
Proposition \ref{propC:MinA}
and Proposition \ref{propC:MinFiniteOne}.

The main interesting aspect
of the following theorem is that,
despite the fact that the support
of $\mu_V$
is contained in the closed unit disk,
the limit of the particle 
farthest from the origin may
live in an annulus at a finite distance
from the unit disk.
Moreover, in the limit, there will be
an infinite number of particles
living in such complement
and they will only accumulate
at the inner boundary of the annulus.

Notice that the following result
involves a potential
that does not need to be
weakly confining. 
It is an example
where the fact of being
strongly confining, in our sense, 
does not immediately imply
a Gumbel fluctuation of the 
farthest particle.

\begin{theorem}[Particles in an annulus]
\label{th:MaxAnnulus}
Suppose $V$ satisfies the 
\emph{standard properties} and
%there are $R \in [1,\infty)$ and 
%$\widetilde R \in (1,\infty]$ that satisfy
%$R < \widetilde R$ and such that
\begin{itemize}
\item \itemEqNoLabel{V (r) > \log r\, \mbox{ for every } r \in (1,R) \cup (\widetilde R,\infty) \mbox{ and }}
\item \itemEqNoLabel{V(r) = \log r\,
\mbox{ for every }
r \in [R,\widetilde R),}
\end{itemize}
for some $R \in [1,\infty)$ and 
$\widetilde R \in (1,\infty]$
such that $R < \widetilde R$.
Suppose that,
for some $\chi>0$,
\[\kappa_n =  n +
\chi + o(1).\]
%\quad \mbox{ or,  equivalently, }
%\quad 
%q_n = 
%\frac{1}{n} - \frac{\chi}{n^2}
%+ o \left(\frac{1}{n^2} \right). \]
Consider a sequence
$\{Y_k\}_{k \geq 0}$
of independent random variables
taking values in $[R,\widetilde R)$
such that $Y_k$ has a density proportional to
$
r \in [R,\widetilde R) \mapsto 
r^{-2(k + \chi)-1}$ .
Then, as point processes on 
$(R,\infty)$,
\[
\{|x_i^{(n)}|: 1 \leq i \leq n\} \cap 
(R,\infty)
\xrightarrow[n \to \infty]{
\mathrm{law}}\{Y_k: k \geq 0\}.\]
Furthermore, the maximum of
$|x_1^{(n)}|,\dots,
|x_n^{(n)}|$ converges in law
to the maximum of 
$\{Y_k\}_{k \geq 0}$.
More explicitly, for  every $t \in [R,\widetilde R)$,
\[\lim_{n \to \infty}\mathbb P
	\left(M_n
	\leq t
	\right)
	=
	\prod_{k=0}^\infty 
	\left( 
	\frac{R^{-2(k+\chi)}-t^{-2(k+\chi)} }
	{R^{-2(k+\chi)}-\widetilde R^{-2(k+\chi)}}
	 \right).\]

\end{theorem}

The limiting point process 
for $\widetilde R < \infty$
can be thought of as a conditioned version of the
limiting point process for $\widetilde R =\infty$
and, by a scaling, it can be taken
to the limiting point process for $R=1$.
For $\chi \in (0,1]$, the latter is related
to a weighted Bergman kernel of $\mathbb C 
\setminus \overline{\mathbb D}$ while
for $\chi > 1$ it would be related to
a truncated version of it as can be
seen from the results of 
Proposition \ref{propC:MinA}. We may see
\cite{BergmanSpaces} for more information on
Bergman kernels.
In Proposition
\ref{prop:GumbelWeakly} from the appendix
we will describe
the behavior
as $\chi$ goes to infinity
of the limit of the maxima
obtained
in Theorem \ref{th:MaxAnnulus}.

The main aspect of
the next theorem is
that we will only
see a finite number
of particles at the same
scale of the particle farthest
from the origin. There is a version
where the number of particles is random
and is
stated in Theorem \ref{th:MaxFiniteParticlesRandom}
for convenience of the reader.
The conditions satisfied
by $V$ imply
that
$\mu_V(\mathbb D)< 1$ so that
$\mu_V$
must give non-zero charge
to $\partial \mathbb D$ but it may
still be charged near $\partial \mathbb D$.

\begin{theorem}[Finite limiting point process]
\label{th:MaxFiniteParticles}

Suppose $V$ satisfies the \emph{standard properties} and 
\begin{itemize}
\item 
	\itemEqNoLabel{V(r) > \log r \,
	\mbox{ for every } 
	r > 1,}
\item
	\itemEqNoLabel{\lim_{r \to \infty}
 	r^\alpha \left( V(r) - \log r \right) = 
 	\gamma,}
\item 
	\itemEqNoLabel{\lim_{r \to 1^+} \frac{V(r)}{r-1}
	 >1 \mbox{ and }
	  \lim_{r \to 1^-} \frac{V(r)}{r-1}
	 < 1,}
%\item
%	\itemEqNoLabel{ \lim_{r \to 1^-} \frac{V(r)}{r-1}
%	 < 1}
\end{itemize}
for some
positive numbers
$\alpha, \gamma >  0$. Suppose that, for some 
$\chi \in (0,\alpha/2) $ such that
$\alpha/2 - \chi \notin \mathbb Z$,
\[\kappa_n = n + \chi + o(1).\]
%\[\quad \mbox{ or,  equivalently, }
%\quad 
%q_n = \frac{1}{n}
%- \frac{\chi}{n^2} +
%o\left(\frac{1}{n^2}\right).
%\]
Consider a sequence
$\{Y_k\}_{k \geq 0}$
of positive independent random variables
such that $Y_k$ has a density proportional to
%\[r \in (0,\infty) \mapsto
%\frac{r^{-2(k + \chi)-1}
%e^{-r^{-\alpha}}}
%{\alpha
% \Gamma\left(\frac{2(k+ \chi)}{\alpha}
%\right)}.
%\]
$r \in (0,\infty) \mapsto
r^{-2(k + \chi)-1}
e^{-r^{-\alpha}}$.
Then, as point processes on 
$(0,\infty)$,
\[
\{n^{-1/\alpha}|x_i^{(n)}|: 1\leq i\leq n\}
\xrightarrow[n \to \infty]{\mathrm{law}}
\{(2\gamma)^{1/\alpha}Y_k: 0 \leq k 
 \leq \lfloor \alpha/2 - \chi
 \rfloor \}.\]
Furthermore, 
$n^{-1/\alpha}\max \{
|x_1^{(n)}|
,\dots, |x_n^{(n)}|\}$
converges in law to
the maximum of 
$\{Y_k\}_{0\leq k \leq 
\lfloor \alpha/2 - \chi
 \rfloor}$.
More explicitly, for every $t > 0$,
	\[\lim_{n \to \infty}\mathbb P
	\left(n^{-1/\alpha}
	M_n
	\leq t
	\right)
	=
				\prod_{k=0}^
				{\lfloor \alpha/2-
				\chi 
				\rfloor}
	\frac{\Gamma
	 \left( 
	 \frac{2(k+\chi)}{\alpha},
	 2\gamma t^{-\alpha} \right)
	}
	{\Gamma
	 \left( 
	 \frac{2(k+\chi)}{\alpha} 
	\right)},
	\]
where $\Gamma$ with two parameters
is the upper incomplete
gamma function.	
\end{theorem}

If $\chi = 1$, the limit point process
obtained in Theorem \ref{th:MaxFiniteParticles}
is related to a system of particles in $\mathbb C$
with density proportional to
$ \prod_{i<j}^n|x_i - x_j|
e^{-\sum_{i=1}^n |x_i|^\alpha }$.
For general $\chi > 0$ the potential 
has an additional term.
See Proposition \ref{propC:MinFiniteOne}.

%Notice that the sequence $\{Y_k\}_{k\geq 0}$
%of
%Theorem \ref{th:MaxAnnulus}
%is 
%the sequence
%$\{Y_k\}_{k \geq 0}$ of Theorem 
%\ref{th:MaxVeryWeak} conditioned
%to live
%in $[1,R]$ and, in particular,
%the limit of the maxima (for $R=1$)
%in
%Theorem \ref{th:MaxAnnulus}
%is the limit of the maxima 
%in Theorem
%\ref{th:MaxVeryWeak} conditioned
%to live in $[1,R]$.
%\textcolor{red}{Retirar?}

\subsection{Gumbel behavior}
\label{sub:Gumbel}

Here we consider two theorems
where Gumbel fluctuations 
of the maxima of the moduli appear.
They are proved in Section \ref{sec:ProofsGumbel}.
The first theorem treats potentials
that are strongly confining including
the quadratic potential
treated in 
 \cite{Rider} as a particular case.
The second theorem treats what we have
called weakly confining potentials
but where $\kappa_n - n \to \infty$ so
that 
Theorem \ref{th:MaxAnnulus} no longer applies.
At the end of this section, we give some intuition on
the conditions of the theorems 
by using potentials generated by 
radial positive measures.

There is an interesting connection between the
Gumbel fluctuations
and the local limit of these Coulomb gases
at the unit circle that only holds for 
$\xi=0$ from Theorem \ref{th:Strongly}. 
Since it is not essential to obtain 
the Gumbel fluctuations, we
have stated this in Subsection \ref{subAp:KernelGumbel}
in the appendix.
By somewhat simpler calculations than the ones
used
to obtain the Gumbel fluctuations,
we obtain the limiting kernel at the edge
(Proposition \ref{prop:StronglyKernelLimit}
and Proposition \ref{prop:WeaklyKernelLimit})
and in Remark \ref{rem:GumbelAndEdge}
we mention its connection with the Gumbel fluctuations
when there is one.

%
%\begin{equation}
%\label{eq:RadialIntensity}
%\bar \rho_n(r) = 
%2\pi r\sum_{k=0}^{n-1}
%a_k^{(n)} r^{2k} e^{-2\alpha_n V(r)}
%\end{equation}

%\textcolor{red}{Proof in 
%\eqref{proof:DensityWeaklyConfining}}
%
%\begin{theorem}
%\label{th:WeaklyKappa}
%Suppose that there is
%$\lambda > 0$ such that
%\begin{itemize}
%\item
%	\itemEqNoLabel{V(r) = \log r
%	+ \frac{\lambda}{\alpha}(1-r)^{\alpha}
%	+o(1-r)^{\alpha} 
%\mbox{ as } r \to 1^- \mbox{ and }}
%\item
%	\itemEqNoLabel{V(r) = \log r 
%	\,\mbox{ for every } r\geq 1.}
%\end{itemize}
%Let $\gamma_n$
%be the unique solution to
%\[e^{2(\kappa_n- n)\gamma_n}
%\gamma_n = 1\]
%and suppose that
%\[
%\lim_{n \to \infty}
%\left(\kappa_n - n\right) = \infty
%\quad
%\mbox{and}
%\quad 
%\lim_{n \to \infty}
%\frac{\kappa_n - n}{
%n^{1/\alpha}}
%= c \geq 0 \]
%Define
%\[A =c
%\left(\frac{2}{\alpha}\right)^{1-1/\alpha}
%\Gamma\left(
%\frac{1}{\alpha} \right)
%\left(\frac{1}{\lambda^{1/\alpha}}
%\right)\]
%
%\[f_n(x)=\frac{1}{\kappa_n - n}
%\bar \rho_n
%\left(1 + \gamma_n
%+\frac{x}{\kappa_n - n} \right)
%\]
%where $\bar \rho_n$ is given by
%\eqref{eq:RadialIntensity}. 
%\textcolor{red}{Add Poisson statement}
%
%\[\lim_{n \to \infty}
%f_n(x) = \frac{1}{1+ 2 c \sqrt{\pi / 
%\lambda}} e^{-2x}
%\]
%uniformly on compact sets
%of $\mathbb R$.
%\end{theorem}

\begin{theorem}[Gumbel fluctuations for strongly confining potentials]
\label{th:Strongly}
Suppose $V$ satisfies the \emph{standard properties} and
\begin{itemize}
\item 
	\itemEq{V(r) = \log r
	+ \frac{\lambdap}{\alpha}
	(r-1)^\alpha
	+o(r-1)^{\alpha+\varepsilon}
	\mbox{ as } r \to 1^+,
						\label{eq:2PotentialEpsilon}} 
\item 
	\itemEq{\label{eq:LeftPotential}
	V(r) = \log r
	+ \frac{\lambdam}{\alpha}(1-r)^\alpha
	+o(1-r)^{\alpha}
	\mbox{ as } r \to 1^- ,}
\item \itemEq{V(r) > \log r
\, \mbox{ for every }
r > 1 \mbox{ and } \nonumber} 
\item 
   \itemEq{\label{eq:2StronglyConfiningInTheorem}
	\liminf_{r \to \infty} 
	\frac{V(r)}{\log r} >1
	\quad \mbox{(strongly confining),}}
\end{itemize}
%and define
%\[c_n =1 + \left(
%\frac{\Delta_n}{
%2\lambda_{\scaleto{+}{4.5pt}}
%n}
% \right)^{1/\alpha}
% \quad 
% \mbox{ and }
% \quad d_n = \frac{1}{\Delta_n}
% \left(
%\frac{\Delta_n}{
%2\lambda_{\scaleto{+}{4.5pt}}
%n}
% \right)^{1/\alpha}.\]
for some positive numbers
$\varepsilon,\lambda_{\scaleto{+}{4.5pt}},
\lambda_{\scaleto{-}{4.5pt}} >0$ and $\alpha \geq 1$.
Suppose that, for some 
$\xi \in \mathbb R$,
\[
\kappa_n = n
+ \xi \left(\frac{2
\lambdap n}{\log n} 
\right)^{1/\alpha}
+ o \left(\frac{n}{\log n} 
\right)^{1/\alpha}.\]
%\[
%\  \mbox{ or,  equivalently, }
%\ 
%q_n = \frac{1}{n}
%- 
%\frac{\xi}{n^2} 
%\left(\frac{ 2 \lambdap n}{\log n} \right)^{1/\alpha}
%\bigg(1+o(1)\bigg)
%%+ o\left(
%%\frac{1}{n^2}
%%\left(\frac{n}{\log n}
%%\right)^{1/\alpha}
%%	\right)
%	.\]
Let $\Delta_n$ be the unique solution to
 $e^{\Delta_n/\alpha}
{\Delta}_n = n^{1/\alpha}$ 
and define
\[\delta_n = 
 \left(
\frac{\Delta_n}{
2\lambda_{\scaleto{+}{4.5pt}}
\kappa_n}
 \right)^{1/\alpha} 
 \quad \mbox{ and } \quad 
A =
\left(\frac{2}{\alpha}\right)^{1-1/\alpha}
\Gamma\left(
\frac{1}{\alpha} \right)
\left(\frac{1}{\lambda_{\scaleto{-}{4.5pt}}^{1/\alpha}}
+
\frac{1}{\lambda_{\scaleto{+}{4.5pt}}^{1/\alpha}}
\right).\]
%Then, for every continuous function
%$f: \mathbb R \to \mathbb R$ whose support
%is
%bounded from below,
%\[\lim_{n \to \infty} 
%\sum_{k=1}^n f
%\left(\delta_n^{-1}\Delta_n
%\left(|x_k^{(n)}| - 1 - 
%\delta_n \right)
%\right)
%= \sum_{x \in \mathcal P} f(x)\]
%where $\mathcal P$ is a Poisson point process
%with intensity
%\[\frac{e^{-2\xi}
%e^{-s}}{A}
%\mathrm d s. \]
Then, as point processes on $\mathbb R$,
\[
\{
\delta_n^{-1}\Delta_n
\big(|x_i^{(n)}| - 1 - 
\delta_n \big): 1 \leq i \leq n\}
\xrightarrow[n \to \infty]{\mathrm{law}}
\mathcal P,
\]
where $\mathcal P$ is a Poisson point process
on $\mathbb R$
with intensity $A^{-1} e^{-2\xi} e^{-s}$.
%In particular,
%\[
%\mathbb P\left(\delta_n^{-1} \Delta_n
%\left(M_n - 1 - 
%\delta_n \right)
%\leq t
%\right)
%=
%\exp\left(-A^{-1} e^{- t -2 \xi }\right)  \]
Furthermore,
\[
\lim_{n \to \infty}
\mathbb P\left(\delta_n^{-1}\Delta_n
\left(M_n - 1 - 
\delta_n \right)
\leq t
\right)
=
\exp\left(-A^{-1} e^{- t -2 \xi }\right)  \]
for every $t \in \mathbb R$.
\end{theorem}

The $\varepsilon$
in condition \eqref{eq:2PotentialEpsilon}
is essential, i.e., the theorem
would not hold if we take 
$\varepsilon = 0$ in 
\eqref{eq:2PotentialEpsilon}. In that
case, we
would have to look at more details about
the behavior of $V$ near $1$.

Let us say a few words about the coefficients.
We can replace $\Delta_n$ by
any sequence $\{ \Delta_n\}_{n \geq 1}$
that satisfies
\begin{equation}
\label{eq:DeltaDefinitionAsympt}
e^{\Delta_n/\alpha}
{\Delta}_n \sim n^{1/\alpha}.
\end{equation}
A simpler version of $\Delta_n$ would be
\[\Delta_n = 
\log n - \alpha \log \log n\]
but  we preferred to use the solution of
$e^{\Delta_n/\alpha}
{\Delta}_n = n^{1/\alpha}$
to emphasize the importance of
 \eqref{eq:DeltaDefinitionAsympt}. Moreover,
replacing $\delta_n^{-1}\Delta_n$
by
$(2\lambda_{\scaleto{+}{4.5pt}})^{1/\alpha}
n^{1/\alpha} (\log n)^{1-1/\alpha}$ in the multiplicative
coefficient we have, for instance,
\[
\lim_{n \to \infty}
\mathbb P\left((2\lambda_{\scaleto{+}{4.5pt}})^{1/\alpha}
n^{1/\alpha} (\log n)^{1-1/\alpha}
\left(M_n - 1 - 
\delta_n \right)
\leq t
\right)
=
\exp\left(-A^{-1} e^{- t -2 \xi }\right) . \]
Nevertheless, we have chosen
to write $\delta_n^{-1}\Delta_n$ for simplicity
of notation. Finally, if $\alpha > 1$ we could
have defined
$\delta_n = \left(
\frac{\Delta_n}{
2\lambda_{\scaleto{+}{4.5pt}}
n}
 \right)^{1/\alpha}$ instead of 
 $ \left(
\frac{\Delta_n}{
2\lambda_{\scaleto{+}{4.5pt}}
\kappa_n}
 \right)^{1/\alpha}$ due to the conditions
 on $\kappa_n$.

The interest of the following
theorem is that it involves 
very weakly confining potentials
so that the coefficients
cannot be guessed from the
limiting kernel at the edge.
See
Proposition \ref{prop:WeaklyKernelLimit}.
and Remark \ref{rem:GumbelAndEdge} in the appendix.

\begin{theorem}[Gumbel fluctuations for weakly
confining potentials]
\label{th:WeaklyKappa}
Suppose $V$ satisfies the \emph{standard properties} and
\begin{itemize}
\item
	\itemEqNoLabel{V(r) = \log r
	+ \frac{\lambda}{\alpha}(1-r)^{\alpha}
	+o(1-r)^{\alpha} 
\mbox{ as } r \to 1^- \mbox{ and }}
\item
	\itemEqNoLabel{V(r) = \log r 
	\,\mbox{ for every } r\geq 1,}
\end{itemize}
for some
$\lambda > 0$ and $\alpha \geq 1$.
Suppose that $\kappa_n>n$
satisfies $\kappa_n/n \to 1$ and that
\[
\lim_{n \to \infty}
\left(\kappa_n - n\right) = \infty
\quad
\mbox{and}
\quad 
\lim_{n \to \infty}
\frac{\kappa_n - n}{
n^{1/\alpha}}
= c \in [0,\infty).\]
Let $\Delta_n$
be the unique solution to
$e^{\Delta_n}
\Delta_n = \kappa_n-n$
and 
define
\[A =c
\left(\frac{2}{\alpha}\right)^{1-1/\alpha}
\Gamma\left(
\frac{1}{\alpha} \right)
\left(\frac{1}{\lambda^{1/\alpha}}
\right).\]
Then, as point processes on $\mathbb R$,
\[
\left\{
2(\kappa_n - n)
\left(|x_i^{(n)}| - 1 - 
\frac{\Delta_n}{2(\kappa_n-n)} \right): 1 \leq i \leq n
\right\}
\xrightarrow[n \to \infty]{\mathrm{law}}
\mathcal P,
\]
where $\mathcal P$ is a Poisson point process
on $\mathbb R$
with intensity $(1+A)^{-1}e^{-s}
\mathrm d s$.
Furthermore,
\[
\lim_{n \to \infty}
\mathbb P\left(2(\kappa_n -n)
\left(M_n - 1 - 
\frac{\Delta_n}{2(\kappa_n - n)} \right)
\leq t
\right)
=
\exp\left(-(1+A)^{-1} e^{- t }\right)  \]
for every $t \in \mathbb R$.
\end{theorem}

Notice that if $\alpha=1$
we must have $c=0$.
As in the case of Theorem \ref{th:Strongly}
we could have chosen $\Delta_n =
\log(\kappa_n - n) - \log \log(\kappa_n - n)$
but we preferred to emphasize 
the property $e^{\Delta_n}
\Delta_n \sim \kappa_n-n$ by using
$\Delta_n$ that satisfies exactly $e^{\Delta_n}
\Delta_n = \kappa_n-n$.

To make the conditions on Theorem \ref{th:Strongly}
and on Theorem \ref{th:WeaklyKappa}
more intuitive we compare them
to the following setting. Let $\nu$ be a 
radial locally finite positive measure on $\mathbb C$ 
%such that
%$\int_{\mathbb C}
%|\log|z|| \mathrm d\nu(z)
%< \infty $. 
%Define
and define
\begin{equation}
\label{eq:LogarithmicPotential}
V^{\nu}(r)=\int_1^r\frac{\nu(D_s)}{s}\mathrm d s,
\end{equation}
where $D_s$ denotes the open disk centered
at zero and of radius $s$. 
Notice that $\Delta 
\left(V^\nu(|\cdot|) \right)= 2\pi \nu$
which tells us that $V^\nu$ has the right of
being called an \textit{electrostatic potential}
of $-\nu$.
Suppose that $\nu(\overline{\mathbb D})=1$
and $\nu(D_r)<1$ for every $r<1$. Then
\eqref{eq:LogarithmicPotential} implies that
$V^{\nu}$ satisfies the standard properties
from Definition \ref{def:StandardProperties}. 
Furthermore, 
up to an additive constant,
we have the well-known relation in physics
$V^{\nu}(|x|) = 
\int_{\overline{\mathbb D}} \log|x-y|\mathrm d \nu(y)$
for every $x \in \overline{\mathbb D}$.
This can be obtained either by
noticing that both functions 
have the same Laplacians or
by Fubini's theorem.
See, 
for instance, \cite[Lemma 4.1]{ButezGZ}.
Assume $V^{\nu}(0) > -\infty$ 
for $V^\nu$ to be continuous.
Then,
Frostman's conditions
implies that 
the equilibrium measure
$\mu_{V^\nu}$ is $\nu|_{\overline{\mathbb D}}$.
Using \eqref{eq:LogarithmicPotential} we get that
%\begin{equation}
%\label{eq:RegularityImpliesBehaviorOfV}
\[\lim_{r \to 1^-}
\frac{1-\nu(D_r)}{(1-r)^{\alpha-1}} = \lambda >0
\quad \mbox{ implies } \quad
V^\nu(r) = \log r + \frac{\lambda}{\alpha} (1-r)^\alpha
+o(1-r)^\alpha
\mbox{ as } r\to 1^{-}\]
%\end{equation}
so that the regularity of $\mu_{V^\nu}$ at the unit
circle would tell us
some behavior of  $V^\nu(r)$ near $r=1$.
If, in addition, $\nu(\mathbb C)=1$ then
\eqref{eq:LogarithmicPotential}
implies that $V(r) = \log r$ for $r\geq 1$.
These are precisely the conditions of
Theorem \ref{th:WeaklyKappa}.
Moreover, by using 
\eqref{eq:LogarithmicPotential} we see that
\[
\nu(D_r) = 1 + \lambda (r-1)^{\alpha-1}
+ o (r-1)^{\alpha-1+\varepsilon}
\quad \mbox{ implies } \quad
V^\nu(r) = \log r + \frac{\lambda}{\alpha} (r-1)^\alpha
+o(r-1)^{\alpha+\varepsilon}\]
so that the conditions of
Theorem \ref{th:Strongly} appear
from the regularity of $\nu$ at the unit circle.
A famous example is the quadratic potential
that appears also in the Ginibre ensemble.
We may obtain it by choosing $\nu = 
\ell_{\mathbb C}/\pi$
so that $V^\nu(r) = 
\int_1^r\frac{\nu(D_s)}{s}\mathrm d s =
(r^2-1)/2$ and  $\alpha = \lambda = 2$.

\subsection{Exponential behavior}

\label{sub:Exponential}

Here the system of particles is restricted
to live in the unit disk. The fluctuations
obtained will be exponential and the methods used
are motivated by
\cite{Seo}. 
A peculiarity of the following case
is that the parameter of the exponential
not only depends
on what happens at the edge but it also
depends on what 
happens inside the unit disk. Moreover,
the conditions
on $V$ imply that $\mu_V(\partial \mathbb D) > 0$
so that
the scale $n^2$ needed to obtain
the fluctuations of the maxima
is no longer guessable from $\mu_V$. This is in contrast
to the case where the potential satisfies
\eqref{eq:LeftPotential} for $\alpha > 1$. In
this case,
the scale $n^{2/\alpha}$ may be guessed
from the behavior of $\mu_V$ near the unit circle.
We have stated the $\alpha>1$ version
as an appendix
in Subsection \ref{subAp:Hard-edgeAlphaCase}
 for convenience of the reader.

In the following theorem, $V$ will be a continuous
function on $[0,1]$.

\begin{theorem}[Hard-edge potential]
\label{th:HardEdge}
Suppose $V$
satisfies the \emph{standard properties} and
\[
\lim_{r \to 1^-}\frac{V(r)}{r-1}
= 1-q,\]
%\quad \mbox{ and }
%\quad V(r) = \infty \mbox{ for } r > 1,\]
for some $q > 0$.
Suppose that $\kappa_n/n \longrightarrow 1$.
Define
\[Q = \max \{p \geq 0: V(s) \geq (1 - p)\log s
\mbox{ for all } s \leq 1\}\]
 and $A = q^2 - (q-Q)^2$.
Then, as point processes on $[0,\infty)$,
\[
\left\{
n^2
(1 - |x_i^{(n)}|) : 1 \leq i \leq n
\right\}
\xrightarrow[n \to \infty]{\mathrm{law}}
\mathcal P,
\]
where $\mathcal P$ is a Poisson point process
on $[0,\infty)$ with intensity $A$.
In particular, for every $t \geq 0$,
\[\mathbb P\left(n^2 \left(1-
M_n
\right) \leq t\right) \to 
1 - 
e^{-At}.\]

\end{theorem}

%The method works for $V$ that satisfies 
%condition
%\eqref{eq:LeftPotential} for
%$\alpha > 1$ and exponential fluctuations
%also appear. Nevertheless,
%in those cases the parameter of the exponential
%does not depend on the behavior of
%$V$ inside the unit disk.
%For completeness, we have written the $\alpha>1$
%version in Theorem \ref{th:HardEdgeAlpha}
%in the appendix.

Following the ideas used in \cite{Seo}, the proof
will involve a limit kernel calculation at the
edge, which is is stated in the following proposition.
A version for $\alpha>1$ can be obtained
by the same methods which can be seen from
the proof of
 Theorem \ref{th:HardEdgeAlpha} in the appendix.
By defining
\[a_k^{(n)}
= \left(2\pi\int_0^1
s^{2k + 1} e^{-2\kappa_n V(s)} 
\mathrm d s\right)^{-1}\]
for $k \in \{0,\dots,n-1\}$, we have the following.

\begin{proposition}[Hard-edge kernel]
\label{prop:HardEdgeKernelLimit}
%Suppose that, for some $q > 0$,
%\[
%\lim_{r \to 1^-}\frac{V(r)}{r-1}
%= 1-q
%\quad \mbox{ and }
%\quad V(r) = \infty \mbox{ for } r > 1,\]
% and assume that
%\[Q = \min \{p \geq 0: V(s) \geq (1 - p)\log s
%\mbox{ for all } s \leq 1\} \mbox{ is strictly positive}.\]
%Take
%$\kappa_n$ such that $\kappa_n/n \longrightarrow 1$.
%Then,
Under the conditions of Theorem \ref{th:HardEdge},
\[\lim_{n \to \infty} \frac{1}{n^2}
\sum_{k=0}^{n-1} a_k^{(n)}\left(
1 + \frac{z}{n}\right)^k \left(1 + 
\frac{\bar w}{n}\right)^k =e^{z + \bar w}\int_0^Q
\frac{q-t}{\pi} e^{-t(z+\bar w)} \mathrm d t\]
for $(z,w)$ uniformly on compact subsets of 
$\mathbb C \times \mathbb C$.

\end{proposition}

%\begin{theorem}[Limiting first intensity]
%Let $V:[0,1] \to \mathbb R$ be a continuous
%function
%such that
%\[V(r) > \log r
%\quad \mbox{ for } r < 1\]
%and such that there exists 
%$Q_{\scaleto{-}{4.5pt}}<1$ with
%\[V(r) = Q_{\scaleto{-}{4.5pt}}
%\log r + o(r-1)\]
%when $r \to 1^-$. 
%Let $Q_{\scaleto{+}{4.5pt}}<1$ 
%be the smallest
%number such that
%\[Q_{\scaleto{+}{4.5pt}}\log r \leq V(r) \quad \mbox{for every } 
%r\in [0,1]\]
%and define
%\[\left(a_k^{(n)} \right)^{-1}
%= 2\pi\int_0^1
%r^{2k+1} e^{-2\kappa_n V(r)} 
%\mathrm d r.\]
%Then
%\[
%\lim_{n \to \infty} 
%\frac{1}{n^2}
%\sum_{k=0}^{n-1}
%a_k^{(n)} \left(
%1+\frac{r}{n}\right)^{2k} e^{-2\kappa_n V\left(
%1+\frac{r}{n}\right)}
%=\int_{Q_{\scaleto{+}{4.5pt}}}^1 
%\frac{(t-Q_{\scaleto{-}{4.5pt}})}
%{\pi} e^{2r 
%(t-Q_{\scaleto{-}{3.5pt}})} 
%\mathrm d t\]
%uniformly on compact sets 
%of\,\footnote{Notice
%that the $n$-th term
%of the sequence of functions
%is defined
%for $r \in [-n,0]$ so that
%for each compact set $K$
%of $(-\infty,0]$
%the $n$-th term 
%of the sequence is well-defined
%on $K$
%for $n$ large enough.}
%$(-\infty,0]$.
%
%
%\end{theorem}

Notice that 
$0<Q\leq q$.
There is a nice, physical,
interpretation for
$q$ and $Q$.
If there exists some
signed measure $\mu$ that 
has $V$ as its potential, i.e.,
such that
\[V(r) = \int_1^r \frac{\mu(D_s)}{s}
\mathrm d s\]
then we can see that
\[1-q=
\mu(\mathbb D)
\quad \mbox{ while } \quad
1-Q=
\mu_V(\mathbb D).\]
The latter equality
is a consequence
of Frostman's conditions.

\begin{remark}[Distance to the unit circle]
By the same method used to prove 
Theorem \ref{th:HardEdge},
the limiting kernel of
 Proposition \ref{prop:StronglyKernelLimit}
or Proposition \ref{prop:WeaklyKernelLimit}
allows us to find
a Poissonian local limit for the process
of moduli at $r=1$ in cases that are not
hard-edge systems. Nevertheless, since
the point processes obtained
in those cases are Poissonian on $\mathbb R$
with constant
intensity, they give us
no information on the maxima of the moduli.
\end{remark}

\section{Determinantal structure}

\label{sec:determinantal}

The main important structure
is the one of a determinantal point process that
we recall in this section. We begin in
Subsection \ref{sub:DetCoulomb}
by explaining the case
of two-dimensional radial determinantal Coulomb gases.
Then, in Subsection
\ref{sub:GeneralDPP}, we recall the definition of
point process and recall
the definition and some properties of 
general determinantal
determinantal point process.
Finally, in Subsection \ref{sub:Kostlan},
we state Kostlan's idea about the description
of the process of moduli which will
be essential in the proofs.

\subsection{Determinantal radial Coulomb gases}
\label{sub:DetCoulomb}
Let $n$ be a positive integer
and let $\sigma$ be a finite radial
positive measure on 
$\mathbb C$ such that
\[\int_{\mathbb C} |z|^{2k}\mathrm d \sigma(z)
< \infty
\quad \mbox{ for } k \leq n-1.\]
Then, 
$\mathcal Z =
\int_{\mathbb C^n} \prod_{i<j}^n |x_i - x_j|^2 
\mathrm d \sigma^{\otimes_n}(x_1,\dots,x_n)
< \infty$.
Moreover, if we define
\[K(z,w) = \sum_{k=0}^{n-1}
a_k z^k \bar w^k
\quad \mbox{ where } \quad
a_k 
= \left( \int_{\mathbb C} |z|^{2k}\mathrm d \sigma(z)\right)^{-1},
\]
we have that
\[\frac{1}{n!}
\det \left(K(x_i,x_j)_{i,j \in \{1,\dots,n\}}
\right)=\frac{1}{\mathcal Z}
\prod_{i<j}^n |x_i - x_j|^2 .
\]
Furthermore, since
\[\int_{\mathbb C}K(x,y) K(y,z)
\mathrm d \sigma(y) = K(x,z) \quad
\mbox{ and } \quad \int_{\mathbb C}K(x,x)
\mathrm d \sigma(x) = n,\]
we can see that
\[\int_{\mathbb C}
\det \left(K(x_i,x_j)_{i,j \in \{1,\dots,k\}}
\right) \mathrm d \sigma(x_k)
=(n-k+1)
\det \left(K(x_i,x_j)_{i,j \in \{1,\dots,k-1\}}
\right)\]
which implies the following property. If 
$(X_1,\dots,X_n) \sim \frac{1}{\mathcal Z}
\prod_{i<j}^n |x_i - x_j|^2 \mathrm d\sigma^{\otimes_n}(x_1,\dots,x_n)$
then, for every bounded measurable
function $f: \mathbb C \to
\mathbb R$,
\[
\mathbb E\left[\prod_{i=1}^n\left(1+f(X_i)\right)
\right]
=\sum_{k=0}^\infty\frac{1}{k!}
\int_{\mathbb C^k}
\det \left(K(x_i,x_j)_{i,j \in \{1,\dots,k\}}
\right)
\prod_{i=1}^k
f(x_i)
\mathrm d \sigma^{\otimes_k}(x_1,\dots,x_k),
\]
where the sum is actually finite since
$\det \left(K(x_i,x_j)_{i,j \in \{1,\dots,k\}}\right)
= 0$ for $k > n$. In this case,
we say that $\{X_1,\dots,X_n\}$ is
a determinantal point process
with kernel $K$ with respect to 
$\sigma$.

\subsection{General determinantal point processes}
\label{sub:GeneralDPP}

Let us recall what we mean
by a point process or a random configuration of points.
Let $M$ be a locally compact Polish space
and let $\mathcal C_M$ be the space
of positive
measures that take integer values on compact sets. 
Every 
$\mathcal X \in \mathcal C_M$ can be written
as
\[\mathcal X = 
\sum_{\lambda \in \Lambda} \delta_{x_\lambda}\]
for some locally finite family
$\{x_{\lambda}\}_{\lambda \in \Lambda}$
of elements of $M$. We shall usually think
$\mathcal X$ as a multiset and write
$\mathcal X=\{x_{\lambda}:\lambda \in \Lambda\}$. 
For any 
compactly supported measurable
function $f:M \to \mathbb R $ we may define
$\hat f: \mathcal C_M \to \mathbb R$ by
\[\hat f(\mathcal X) = \int_M f \mathrm d \mathcal X
= \sum_{\lambda \in \Lambda} f(x_\lambda).\]
Then, $\mathcal C_M$ will be endowed with
the smallest topology that makes
$\hat f$ continuous for every
compactly supported continuous function 
$f:M \to \mathbb R$. A \textit{point process}
will be a random element of $\mathcal C_M$
and the weak convergence or convergence
in law 
of point processes will be well-defined.

Now, to define the notion of determinantal point
process, let us endow $M$ with a 
locally finite measure $\sigma$
and let us consider a continuous Hermitian function 
$K:M \times M \to \mathbb C$, where
Hermitian means that $K(x,y) = \overline{K(y,x)}$
for every $x, y \in M$.
A point process
 $\mathcal X$ of $M$ is called
  a \textit{determinantal point process} with kernel 
$K$
with respect to the reference measure
$\sigma$ if, for every compactly supported
continuous function $f: M \to \mathbb R$,
\begin{equation}
\label{eq:GeneralMultFunctional}
\mathbb E\left[\prod_{x \in \mathcal X}
\left(1+f(x)\right)
\right]
=\sum_{k=0}^\infty\frac{1}{k!}
\int_{M^k}
\det \left(K(x_i,x_j)_{i,j \in \{1,\dots,k\}}
\right)
\prod_{i=1}^k
f(x_i)
\mathrm d \sigma^{\otimes_k}(x_1,\dots,x_k).
\end{equation}
In fact,
we may relax the continuity 
and the Hermiticity condition of $K$ but
it will not be really 
necessary for our purposes. We refer to 
\cite{HoughKrishnapurPeresVirag}
and \cite{ShiraiTakahashi} for more information
on determinantal point processes.

There are three aspects we wish to
remark.
\begin{itemize}
\item The following freedom on $K$ and $\sigma$
will be useful to consider. Notice that
if $\mathcal X$ is a determinantal 
point process with kernel $K$
with respect to $\sigma$ and if $H:M \to 
\mathbb C\setminus\{0\}$
is continuous, then $\mathcal X$
is a determinantal 
point process with kernel $\widetilde K$
and reference measure $\widetilde \sigma$
given by
\[\widetilde K(x,y)=
H(x) K(x,y) \overline{H(y)} \quad \mbox{ and }
\quad \mathrm d \widetilde \sigma(x) 
= |H(x)|^{-2} \mathrm d \sigma(x).\]
This will be used in the case of radial determinantal
Coulomb gases
to move the potential term
from $\sigma$ to $K$ so that we may choose
the Lebesgue measure or any other convenient
measure as a reference measure. 

\item The defining property
\eqref{eq:GeneralMultFunctional} 
allows us to perform many calculations.
In particular,
it allows us to write
\[\mathbb E \left[ \sum_{x \in \mathcal X}
f(x)\right] = \int_M K(x,x) \mathrm d \sigma(x).\]
The function $\rho(x)= K(x,x)$ is known
as the intensity function
or the first correlation function. The
determinants
$(x_1,\dots,x_k) \mapsto \det \left(K(x_i,x_j)_{i,j \in \{1,\dots,k\}}
\right)$ give the so-called
$k$-correlation functions. Since they
will not be used here, we refer to
\cite{HoughKrishnapurPeresVirag} for more information.

\item One of the most interesting aspects of
determinantal point process
is the continuity of the map that
takes a kernel $K$ to the law of a
point process. More precisely,
if for each $n$ we are given
a determinantal point process $\mathcal X_n$ 
with kernel $K_n$ with respect to the 
reference measure $\sigma$ and if $\mathcal X$
is a
determinantal point process 
with kernel $K_n$ with respect to the 
same reference measure $\sigma$,
\cite[Proposition 3.10]{ShiraiTakahashi} 
tells us that
\begin{equation}
\label{eqDet:ContinuityKernelProcess}
K_n \xrightarrow[n \to \infty]{\mathrm{compact-open}}
K \quad \Longrightarrow
\quad \mathcal X_n 
\xrightarrow[n \to \infty]{\mathrm{law}}
\mathcal X,
\end{equation}
where $\mathrm{compact-open}$
means that the convergence
is in the compact-open topology or, equivalently,
that it is uniform on 
compact sets of $M \times M$.

\end{itemize}

\subsection{Process of the moduli and Kostlan's idea}
\label{sub:Kostlan}
Now, suppose that $M$ is a subset of $\mathbb C$
and suppose that $M$ 
 and $\sigma$ are
 invariant under rotations.
Given $I \subset \mathbb Z$,
let $K$ be the kernel of
the orthogonal projection from
$L^2(M,\sigma)$ onto
the closure of the subspace generated by
$\{z^k\}_{k \in I}$, where we have
assumed that 
$z^k \in L^2(M,\sigma)$ for every
$k \in I$.
To be more precise,
\[K(z,w) = \sum_{k\in I}
a_k z^k \bar w^k,
\quad \mbox{ where }
a_k 
= \left( \int_{\mathbb C} |z|^{2k}\mathrm d \sigma(z)\right)^{-1}.
\]
Suppose that $\mathcal X$ is a determinantal point process
with kernel $K$
with respect to $\sigma$.
Then, an explicit calculation
such as the one in 
\cite[Theorem 4.7.1]{HoughKrishnapurPeresVirag}
or
\cite[Theorem 1.2]{ChafaiPeche}
shows that, if $f: M \to \mathbb R$
is invariant under rotations,
\[
\mathbb E\left[\prod_{x \in \mathcal X}
\left(1+f(x)\right)
\right]
=\prod_{k \in I}\left(1+
a_k
\int_{\mathbb C} f(z) |z|^{2k} \mathrm d \sigma(z)
\right).
\]
In particular,
if $\{\widetilde Y_k\}_{k \in I}$
are independent random variables on $M$
such that 
$\widetilde Y_k \sim a_k |z|^{2k} 
\mathrm d \sigma(z)$, we have 
the equality in law
\begin{equation}
\label{eq:LawEqualityModulus}
\{|x|: x \in \mathcal X\}
\sim \{|\widetilde Y_k|:
k \in I\},
\end{equation}
where we are thinking both
as point processes in $[0,\infty)$. This is the
main property used
in this article and it was initially considered by
Kostlan in \cite{Kostlan}.
The extension where $K$ is not an orthogonal
projection is
described in
\cite[Theorem 4.7.1]{HoughKrishnapurPeresVirag}
and will be particularly useful in the proof of
Theorem \ref{th:MaxFiniteParticlesRandom} in
the appendix. 

For a more concrete example, we may consider
the case from Subsection \ref{sub:DetCoulomb} 
where $\sigma$ is defined by
$\mathrm d \sigma(z) = 
e^{-V(|z|)}\mathrm d \ell_{\mathbb C}(z)$ for some continuous function
$V: [0,\infty) \to \mathbb R$.
If $(X_1,\dots,X_n)$ has a density
proportional to $\prod_{i<j}^n
|x_i - x_j|^2 e^{-2\sum_{i=1}^n V(|x_i|)}$, then
$\{X_1,\dots,X_n\}$ is a determinantal point
process with kernel
\[K(z,w) = \frac{1}{2\pi}\sum_{k=0}^{n-1}
b_k z^k \bar w^k e^{-V(z)} e^{-V(w)},
\quad \mbox{ where } \quad
b_k 
= \left( \int_{0}^\infty r^{2k+1} e^{-2 V(r)}
\mathrm d r \right)^{-1},\]
with respect to Lebesgue measure.
By taking $n$ positive
random variables $\{Y_k\}_{0 \leq k \leq n-1}$
such that $Y_k \sim b_k r^{2k+1}e^{-2V(r)}\mathrm d r$,
we have that
\[\{|X_1|,\dots,|X_n|\} \sim 
\{Y_k: 0 \leq k \leq n-1\}.\]
We may quickly see, for instance, 
that both point processes have the same
intensity
\[\bar \rho(r) = \sum_{k=0}^{n-1}b_k r^{2k+1}
e^{-2V(r)}\]
with respect to Lebesgue measure on $[0,\infty)$, 
i.e., that $\mathbb E[\sum_{k=0}^{n-1} f(Y_k)]$
and
$\mathbb E[\sum_{i=1}^{n} f(X_i)]$ equal
$\int_0^\infty f(r) \bar \rho (r) \mathrm d r $
for every bounded measurable function
$f:[0,\infty) \to \mathbb R$.

\section{Proofs of the Coulombian behavior}
\label{sec:Minima}

To prove Theorem \ref{th:MaxAnnulus} and
Theorem \ref{th:MaxFiniteParticles}
we will take advantage of the continuity
property stated in
\eqref{eqDet:ContinuityKernelProcess}. It
involves an understanding
of the limiting point processes at the correct
scale and in the correct region of $\mathbb C$.
Nevertheless, using the notation 
of Subsection \ref{sub:GeneralDPP},  since 
the map $\sup: \mathcal C_{[0,\infty)}
\to [0,\infty]$ that takes a configuration
of points to its supremum is not 
continuous,
a point process convergence will not imply
the convergence in law
of the maxima. For this reason, it is more
convenient to consider the inverted model 
and to use
the map 
$\inf: \mathcal C_{[0,\infty)}
\to [0,\infty]$ that takes a configuration
of points to its infimum, which is continuous.
Behind this simple reasoning is
the idea that
the maximum and the minimum
are indistinguishable on the sphere:
one can take one to the other by a rotation.
The rotation used will be precisely
the map $z \mapsto 1/z$ in the complex plane.
This idea was used in \cite{ButezGZ}
and it is motivated
by an equivariant property
of the Coulomb gas model which in turn 
is motivated
by the regular case where
the Laplacian of $V$
is thought of as a $(1,1)$-form and
$e^{-2V}$ is thought of as a metric
on the line bundle of
degree one
on the sphere.
These objects can be found
in the work of Berman \cite{Berman} who consider
analogous processes on complex manifolds.
We emphasize
that no complex geometry is needed
in this article
but that the ideas fit nicely in that context.

Let me begin by stating the following
equivariant
property of Coulomb gases.

\begin{lemma}[Inversion of Coulomb gases]
\label{lem:Inversion}
Let $V: \mathbb C \to (-\infty,\infty]$ 
be a measurable function and let $\chi > 0$.
Define $\widetilde V: \mathbb C \setminus \{0\}
\to (-\infty,\infty]$ by
\[ \widetilde{V}(x) = V \left(\frac{1}{x} \right) + 
\log |x|.
 \]
Then, the image of the measure
\[ \prod_{i<j}^n |x_i - x_j|^2 
e^{-2(n+\chi)\sum_{i=1}^n V(x_i)}
{\rm d}\ell_{ \mathbb C}(x_1)\dots
{\rm d}\ell_{ \mathbb C}(x_n)\]
under the application
$(x_1,\dots,x_n) \mapsto (1/x_1,\dots,1/x_n)$
is the measure
\[ \prod_{i<j}^n |x_i - x_j|^2 
e^{-2(n+\chi)\sum_{i=1}^n \widetilde V(x_i)}
{\rm d}\Lambda_{\chi}(x_1)\dots 
{\rm d}\Lambda_{\chi}(x_n)\]
where
\[ {\rm d}\Lambda_{\chi}(x) = |x|^{2(\chi-1)}
{\rm d}\ell_{\mathbb C}(x) \]
\end{lemma}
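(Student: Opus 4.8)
The plan is to perform a direct change of variables $x_i \mapsto y_i = 1/x_i$ on $\mathbb C \setminus \{0\}$, tracking each of the three ingredients of the density separately: the Vandermonde factor, the weight $e^{-2(n+\chi)\sum V(x_i)}$, and the Lebesgue reference measure. First I would record the Jacobian of the map $x \mapsto 1/x = \bar x / |x|^2$ viewed as a transformation of $\mathbb R^2$; since this is (anti)conformal, the real Jacobian determinant has absolute value $1/|x|^4$, so ${\rm d}\ell_{\mathbb C}(x) = |y|^{-4}\,{\rm d}\ell_{\mathbb C}(y)$ when $x = 1/y$. Next I would expand the Vandermonde: for $x_i = 1/y_i$ we have $x_i - x_j = (y_j - y_i)/(y_i y_j)$, hence
\[
\prod_{i<j}|x_i - x_j|^2 = \prod_{i<j}|y_i - y_j|^2 \cdot \prod_{i<j}\frac{1}{|y_i y_j|^2} = \prod_{i<j}|y_i - y_j|^2 \cdot \prod_{i=1}^n |y_i|^{-2(n-1)}.
\]
Finally the potential term: by definition of $\tilde V$, writing $V(x_i) = V(1/y_i) = \tilde V(y_i) - \log|y_i|$, so $e^{-2(n+\chi)\sum_i V(x_i)} = e^{-2(n+\chi)\sum_i \tilde V(y_i)} \cdot \prod_{i=1}^n |y_i|^{2(n+\chi)}$.

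The second step is to collect all the powers of $|y_i|$ that have accumulated. From the three sources above, the exponent of $|y_i|$ is $-2(n-1)$ (Vandermonde) $-4$ (Jacobian) $+2(n+\chi)$ (potential) $= 2\chi - 2 = 2(\chi - 1)$. Therefore the pushforward density is
\[
\prod_{i<j}|y_i - y_j|^2 \, e^{-2(n+\chi)\sum_i \tilde V(y_i)} \prod_{i=1}^n |y_i|^{2(\chi-1)}\,{\rm d}\ell_{\mathbb C}(y_1)\cdots{\rm d}\ell_{\mathbb C}(y_n),
\]
which is exactly the claimed expression once we absorb $\prod_i |y_i|^{2(\chi-1)}{\rm d}\ell_{\mathbb C}(y_i)$ into $\prod_i {\rm d}\Lambda_\chi(y_i)$. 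The bookkeeping of exponents is the only place an error could slip in, so I would present that arithmetic explicitly; there is no analytic subtlety, since the map is a smooth diffeomorphism of $\mathbb C\setminus\{0\}$ onto itself and $\{0\}$ is Lebesgue-null, so measurability and the validity of the change-of-variables formula are immediate for the measurable function $V$.

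The main (very mild) obstacle is simply being careful that the counting of the power of $|y_i|$ matches across the symmetric product — in particular that the Vandermonde contributes $|y_i|^{-2(n-1)}$ to each index $i$ (since $y_i$ appears in $n-1$ of the pairs $\{i,j\}$), not some other power. Once that is pinned down, the identity $2(n+\chi) - 2(n-1) - 4 = 2(\chi-1)$ closes the proof. I would also remark that this is precisely the computation underlying the geometric interpretation mentioned in Section \ref{sec:Minima}: the factor $\log|x|$ in $\tilde V$ is the transition function for the relevant metric on the tautological bundle over the sphere, but no such language is needed here.
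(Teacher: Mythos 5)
Your proof is correct, and the exponent bookkeeping $2(n+\chi)-2(n-1)-4=2(\chi-1)$ is exactly the arithmetic that has to close. The paper reaches the same conclusion by a slightly different organization: instead of tracking the global power of $|y_i|$ across the three factors, it first rewrites the density as $e^{-2\sum_{i<j}G^V(x_i,x_j)}\,{\rm d}\pi(x_1)\cdots{\rm d}\pi(x_n)$ with the two-body kernel $G^V(x,y)=-\log|x-y|+V(x)+V(y)$ and the one-body measure ${\rm d}\pi=e^{-2(\chi+1)V}{\rm d}\ell_{\mathbb C}$, and then checks that each of these two objects separately transforms covariantly under inversion ($G^V\mapsto G^{\tilde V}$ and $\pi\mapsto e^{-2(\chi+1)\tilde V}{\rm d}\Lambda_\chi$). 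The payoff of that reorganization is that the $n$-dependent powers of $|y_i|$ never appear explicitly: the $-2(n-1)$ from the Vandermonde and the $2(n-1)$ portion of the potential cancel pair by pair inside $G^V$, and only the $\chi$-dependent residue survives in $\pi$, which is precisely the ``avoid possible mistakes'' motivation stated in the paper. Your direct computation buys brevity and makes the source of each exponent transparent; the paper's version buys a structural statement (conformal covariance of the two-body interaction) that matches the line-bundle picture you allude to at the end. Both are complete proofs.
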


\begin{proof}
To avoid possible mistakes, we divide 
the change of variables in two steps.
Consider
the function
$G^V:\mathbb C \setminus \{0\}
\times \mathbb C \setminus \{0\}
\to (-\infty,\infty]$ and
the positive measure $\pi$ defined by
\[G^V(x,y) = -\log|x-y| + V(x) + V(y)
 \ \ \
 \mbox{ and }
 \ \ \
 {\rm d}\pi=e^{-2(\chi+1)V }{\rm d}\ell_{\mathbb C}.\]
Then
we may write
\begin{align*}
 \prod_{i<j}^n &|x_i - x_j|^2 
e^{-2(n+\chi)\sum_{i=1}^n V(x_i)}
{\rm d}\ell_{ \mathbb C}(x_1)\dots
{\rm d}\ell_{\mathbb C}(x_n)						\\
&=
\exp 
\left(-2  \left[ -\sum_{i<j}^n \log |x_i-x_j| + 
(n+\chi) \sum_{i=1}^n V(x_i)  \right]
\right) {\rm d}\ell_{\mathbb C}(x_1) \dots
{\rm d}\ell_{\mathbb C}(x_n)			\\
&= e^{-2\sum_{i<j}^n G^V(x_i,x_j)} 
{\rm d}\pi(x_1)\dots {\rm d}\pi(x_n).
\end{align*}
It is enough, then, to notice that
the image of $G^V$ and $\pi$ under the inversion
are $G^{\widetilde V}$
and $\widetilde \pi$, respectively, defined by 
\[G^{\widetilde V}(x,y)
	=-\log|x-y| + \widetilde V(x) + \widetilde V(y)
	\ \ \ 
	\mbox{ and }
	\ \ \
	{\rm d} \widetilde \pi = e^{-2(\chi + 1) V} 
	{\rm d}\Lambda_{\chi}. \]
\end{proof}

Lemma \ref{lem:Inversion} tells us that
the inverted model we will be interested
in will have the following form.
Let us consider
a sequence
$\{\kappa_n\}_{n \geq 1}$ of positive numbers,
a lower semicontinuous
function $U: [0,\infty) \to \mathbb R$
such that
\[\lim_{r \to \infty} 
\left\{U(r) - \log r\right\} > -\infty\]
and
a system of particles
$(x_1^{(n)},\dots,x_n^{(n)} )$
distributed according to
the law proportional to
\begin{equation}
\label{eq:LawKappaNMenosN}
 \prod_{i<j}^n |x_i - x_j|^2 
e^{-2\kappa_n \sum_{i=1}^n U(|x_i|)}
{\rm d}\Lambda_{\kappa_n - n}(x_1)\dots
{\rm d}\Lambda_{\kappa_n - n}(x_n),
\end{equation}
where $\Lambda_\chi$ is defined by
\[ {\rm d}\Lambda_{\chi}(x) 
= |x|^{2(\chi -1)} 
{\rm d}\ell_{\mathbb C}(x) \]
for every $\chi > 0$.
This can be thought of as an electrostatic system
confined by $U$ plus
a potential 
$\left(1 - \frac{n+1}{\kappa_n} \right)\log|\cdot|$
generated by a point charge at the origin.
The following proposition
will be the limiting point process
convergence used to prove
Theorem \ref{th:MaxAnnulus}.
It is stated in a slightly more general form
since there will be no essential difference
in the proof.

\begin{proposition}[Particles at zero potential]
\label{propC:MinA}

Suppose $U:[0,\infty) \to [0,\infty]$ 
is non-negative 
and lower semicontinuous.
Denote 
\[\mathcal A= 
\{ r \geq 0 : \, U(r) = 0\}
\quad \mbox{ and } \quad
R = \emph{ess\,sup } \mathcal A,\]
i.e., 
$R$ is such that the Lebesgue measure of
$\mathcal A \cap (R, \infty)$  is zero
but the Lebesgue measure of
$\mathcal A \cap (r, \infty)$
is different from zero for every $r < R$.
Assume $R>0$ 
%or, equivalently,
%that 
%$\mathcal A$ 
%has positive Lebesgue measure.
and that, for some $\chi > 0$,
\[\kappa_n = n + \chi + 
o(1).\]
Then, 
\[
 \{x_k^{(n)}: k \in \{1,\dots,n\}
 	\mbox{ and } |x_k^{(n)}| < R\}
\xrightarrow[n \to \infty]
	 {\mathrm {law}} \mathbb M,\]
where $\mathbb M$ is
the (inclusion into the open unit
disk of radius $R$ of a) 
determinantal point process
on $\{ x \in \mathbb C : \,|x| < R
\mbox{ and } U(|x|) = 0\}$ 
with kernel
\[ K
(z,w) = \sum_{k=0}^\infty a_k z^k \bar w^k,
 \ \ \
 a_k 
=\left(2 \pi 
\int_{\mathcal A}
r^{2k+2\chi-1} {\rm d}r \right)^{-1},\]
with respect to the reference measure
$\Lambda_{\chi}$.
\begin{proof}
For simplicity, we shall assume that $R\leq 1$.
The general case can be obtained
by a scaling.
Denote 
	$Z = \{x \in \mathbb C:
	|x| < R \mbox{ and } U(|x|)=0\}
		$.
We will prove that
\begin{equation}
\label{eq:ZeroPotentialLimitInProof}
\{x_k^{(n)}: k \in \{1,\dots,n\}
 	\mbox{ and } x_k^{(n)} \in Z \}
 \xrightarrow[n \to \infty]
 {\mathrm{law}} 
 \mathbb M
 \end{equation}
and that
\begin{equation}
\label{eq:NumberGoesToZero}
\# \{x_k^{(n)}: k \in \{1,\dots,n\}
	\mbox{, } 
 	|x_k^{(n)}| \leq \widetilde R 
 	\mbox{ and } x_k^{(n)} \notin Z \}
  \xrightarrow[n \to \infty]
  {\mathrm{law}} 0
  \end{equation}
for every
$\widetilde R < R$. Then we conclude by
the following lemma.

\begin{lemma}[Union with an empty point process]
Let $X$ be a Polish space
and let $C \subset X$ be a closed subset
of $X$.
Suppose we have a sequence of point 
processes
$\{P_n\}_{n \in \mathbb N}$ on $X$
and a point process $P$ on $C$
such 
\[P_n \cap C 
\xrightarrow[n \to \infty]{\mathrm{law}}
 P \ \ \ \mbox{ and } \ \ \
\# \left( P_n \cap \mathcal K \cap C^c\right)
\xrightarrow[n \to \infty]{\mathrm{law}}
 0 \mbox{ for every compact set } 
 \mathcal K \subset X.\] 
Then,
\[P_n 
\xrightarrow[n \to \infty]{\mathrm{law}} 
P,\]
where $P$ is seen as a point process
in $X$ by the natural inclusion.
\begin{proof}[Proof of the lemma]
By \cite[Theorem 4.11]{Kallenberg} we
have to prove that
	$$\sum_{x \in P_n} f(x) 
	 \xrightarrow[n \to \infty]
	 {\mathrm{law}} 
	\sum_{x \in P} f(x)$$
for every continuous function
$f: X \to \mathbb R$ with compact support. 
We already know that
	$$\sum_{x \in P_n \cap C} f(x) 
	\xrightarrow[n \to \infty]
	 {\mathrm{law}} 
	\sum_{x \in P} f(x)$$
so that it is enough, by Slutsky's theorem,
to prove that
	$$\sum_{x \in P_n \cap C^c} f(x) 
	\xrightarrow[n \to \infty]
	 {\mathrm{law}} 
	0.$$
Let $\mathcal K = \mbox{supp} \, f$. Then,
by hypothesis,
$\# \left( P_n \cap \mathcal K \cap C^c\right)
\to 0$.	We can use that
$$\left|\sum_{x \in P_n \cap C^c} f(x) \right|
\leq \# \left( P_n \cap \mathcal K \cap C^c\right)
	\|f\|_{\infty}$$
to conclude.	

\end{proof}
\end{lemma}

To prove
\eqref{eq:ZeroPotentialLimitInProof}
we will use
\cite[Proposition 3.10]{ShiraiTakahashi}
(recalled in \eqref{eqDet:ContinuityKernelProcess}).
A small problem appears since
$\Lambda_{\kappa_n - n}$ is not a fixed
reference measure. We choose any 
$\varepsilon > 0$ and
fix
instead another reference
measure $\Lambda_{\chi - \varepsilon}$.
The point process
$\{x_k^{(n)}: k \in \{1,\dots,n\}
 	\mbox{ and } x_k^{(n)} \in Z \}$
 	is determinantal with kernel 
 	$K_n:Z \times Z \to \mathbb C$ given by
\begin{equation}
\label{KernelFormulaInZeroPotentialProof}
K_n(z,w) = \sum_{k=0}^{n-1} a_k^{(n)}
z^k \bar w^k 
|z \bar w|^{\kappa_n - n - \chi + \varepsilon},
\quad
a_k^{(n)}  = 
\left(2\pi
\int_0^\infty
r^{2k+2\kappa_n - 2n -1} 
e^{-2\kappa_n U\left(r \right)}
{\rm d}r \right)^{-1},
\end{equation}
with respect to $\Lambda_{\chi - \varepsilon}$.
Since $\kappa_n - n - \chi + \varepsilon$ is
positive for $n$ large enough, $K_n$
is continuous and we are in the context
described in Subsection \ref{sub:GeneralDPP}.
We first notice that 
$|z \bar w|^{\kappa_n - n - \chi + \varepsilon}$
converges to $|z \bar w|^{\varepsilon}$
for $(z,w)$
uniformly on compact sets of 
$\mathbb C \times \mathbb C$.
We will now show that
\[\sum_{k=0}^{n-1} a_k^{(n)}
z^k \bar w^k
\xrightarrow[n \to \infty]{}
\sum_{k=0}^\infty a_k z^k \bar w^k\]
for $(z,w)$ uniformly on compact sets 
of $D_R \times D_R$. 
This will be done by showing the
convergence of the coefficients
and by bounding
each term of the series to apply Lebesgue's dominated
convergence theorem.
%
%We will
%proceed by the following steps.
%\begin{enumerate}
%\item 
%Notice that
%$\lim_{n \to \infty} a_k^{(n)}= a_k$.
%\item
%Find a sequence
%$\{A_k\}_{k \in \mathbb N}$
%such that
%$\sum_{k=0}^\infty A_k r^k$ converges
%for every $r \in [0, R^2)$ and
%such that $a_k^{(n)} \leq A_k$ for every $n$
%and $k < n$.
%\item Use Lebesgue's dominated convergence theorem
%to conclude.
%\end{enumerate}
%\textbf{Step 1.}
%We want to find the limit,
%as $n$ goes to infinity, of
%\[\left( 2\pi a_k^{(n)} \right)^{-1} = \int_0^\infty
%r^{2k+2\kappa_n - 2n -1} 
%e^{-2\kappa_n V\left(r \right)}{\rm d}r.
%\]
The bound
$r^{2k+2\kappa_n - 2n -1} 
e^{-2\kappa_n U\left(r \right)}
\leq (r^{\varepsilon} + r^{-\varepsilon})r^{2k+2\chi-1} 
e^{-2(k+1+(\chi-\varepsilon))
U\left(r \right)}$,
for $\varepsilon>0$ small enough, allows us to use Lebesgue's
dominated convergence theorem and obtain
\[\int_0^\infty
r^{2k+2\kappa_n - 2n -1} 
e^{-2\kappa_n U\left(r \right)}{\rm d}r
\xrightarrow[n \to \infty]{}
\int_{\mathcal A}
r^{2k+2\chi-1} {\rm d}r.\]
This gives us
the convergence of $a_k^{(n)}$ towards
$a_k$. Now, we need to bound the terms in 
the series. 
By definition of $\mathcal A$ and since
we are assuming that $R=\mathrm{ess\,sup }\,
\mathcal A\leq 1$,
we may choose $\varepsilon > 0$ small enough such that
\[\int_0^\infty
r^{2k+2\kappa_n - 2n -1} 
e^{-2\kappa_n U\left(r \right)}{\rm d}r
\geq
\int_{\mathcal A}
r^{2k+2\kappa_n - 2n -1}  {\rm d}r
\geq
\int_{\mathcal A}
r^{2k+2\chi + \varepsilon -1}   \mathrm d r
\]
for $n$ large enough 
and for every $k \in \{0,\dots,n-1\}$. So, 
we define
\[A_k = \left(2\pi
\int_{\mathcal A} 
r^{2k+2\chi + \varepsilon-1} {\rm d}r \right)^{-1}\]
which satisfies
$a_k^{(n)} \leq A_k$ and notice that,
by Laplace's method, 
\[\frac{1}{k}\log 
\int_{\mathcal A} 
r^{2k+2\chi + \varepsilon -1}  {\rm d}r
=
\frac{1}{k}\log 
\int_{\mathcal A} 
e^{k\log r^2}r^{2\chi + \varepsilon -1}{\rm d}r 
\xrightarrow[k \to \infty]
	 {} \sup \{\log r^2\}\]
where the supremum is taken
over the support of the Lebesgue measure on 
$\mathcal A$.
By the definition of $R$ this supremum
is $\log R^2$ and
the radius of convergence of
$\sum_{k=0}^\infty A_k r^k$ is
$R^2$.
Take $r \in [0,R)$ and suppose
that $|z|, |w| \leq r$.	
Then
$$\left|\sum_{k=0}^{n-1} a_k^{(n)}
z^k \bar w^k
-
\sum_{k=0}^\infty a_k z^k \bar w^k
\right|
\leq
\sum_{k=0}^{\infty} |a_k^{(n)} - a_k|
|z|^k |\bar w|^k
\leq
\sum_{k=0}^{\infty} |a_k^{(n)} - a_k|
r^{2k}
 $$	
where we have defined $a_k^{(n)} = 0$
for $k \geq n$.
Since $|a_k^{(n)} - a_k|r^{2k}$
is bounded by
$2A_k r^{2k}$ 
we can use Lebesgue's dominated convergence
theorem to conclude.

Now, we need to show \eqref{eq:NumberGoesToZero}.
As explained in Section \ref{sec:determinantal},
$\{x_1^{(n)}, \dots, x_n^{(n)} \}$
is a determinantal point process with
respect 
to
$e^{-2\kappa_n U(|z|)}\mathrm d
\Lambda_{\chi - \varepsilon}$ and
with kernel $K_n:\mathbb C \times \mathbb C
\to \mathbb C$ defined by the same formula
\eqref{KernelFormulaInZeroPotentialProof}.
In particular, we have
\[
\mathbb E \left[\# \{x_k^{(n)}: k \in \{1,\dots,n\}
	\mbox{, } 
 	|x_k^{(n)}| \leq \widetilde R 
 	\mbox{ and } x_k^{(n)} \notin Z \}
 	\right]
 	= \int_{Z^c \cap \overline D_{\widetilde R}}
 	 K_n(z,z) e^{-2\kappa_n U(|z|)}
 	\mathrm d \Lambda_{\chi-\varepsilon}(z),
\]
where $\overline D_{\widetilde R}$ 
denotes the closed unit disk of radius $\widetilde R$
and centered at zero.
Since $K_n(z,z)$ converges uniformly on
$\overline D_{\widetilde R}$ and since
$U(|z|)>0$ for $z \in Z^c$, we can use
Lebesgue's dominated convergence
theorem to conclude that
$$\mathbb E \left[\# \{x_k^{(n)}: k \in \{1,\dots,n\}
	\mbox{, } 
 	|x_k^{(n)}| \leq \widetilde R 
 	\mbox{ and } x_k^{(n)} \notin Z \}
 	\right]
\xrightarrow[n \to \infty]
	 {} 0$$
and then
\[\#\{x_k^{(n)}: k \in \{1,\dots,n\}
	\mbox{, } 
 	|x_k^{(n)}| \leq \widetilde R 
 	\mbox{ and } x_k^{(n)} \notin Z \}
 	\xrightarrow[n \to \infty]
	 {\mathrm{law}} 0.\]
	 
\end{proof}
\end{proposition}

Recall that $(x_1^{(n)},\dots,x_n^{(n)} )$
is
distributed according to
the law proportional to
\eqref{eq:LawKappaNMenosN}.
For Theorem \ref{th:MaxFiniteParticles},
and also for Theorem
\ref{th:MaxFiniteParticlesRandom} in the appendix, the following  will be used.

\begin{proposition}[Finite limiting process at zero]
\label{propC:MinFiniteOne}
Suppose that $U:[0,\infty) \to 
[0,\infty)$ is continuous
and that 
\begin{itemize}
\item
	\itemEqNoLabel{U(r) > 0
 		\mbox{ for } r \neq 1 ,	}
\item
	\itemEqNoLabel{
	\lim_{r \to 0}\frac{1}{r^\alpha}U(r)  
	= \lambda,}
\item
	\itemEqNoLabel{
	\lim_{r \to 1^+}\frac{U(r)}{r-1}
= \lp \mbox{ and} 
	}
\item
	\itemEqNoLabel{
	\lim_{r \to 1^-}\frac{U(r)}{1-r}
= \lm
	}
\end{itemize}
for some positive numbers
$\lambda, \alpha, \lp, \lm > 0$. Suppose that,
for some $\chi \in (0, \alpha/2]$
and $\xi \in \mathbb R$,
\[\kappa_n = n + \chi + \frac{\xi}{\log n}
+ o \left(\frac{1}{\log n} \right) .\]
Then,
\begin{equation}
\label{eq:MinFiniteOne}
 \{n^{1/\alpha} x_k^{(n)}: k \in \{1,\dots,n\}
	\}
\xrightarrow[n \to \infty]
	 {\mathrm{law}} 
	 \mathbb G,
\end{equation}
where $\mathbb G$ is
a determinantal point process
on $\mathbb C$ 
with respect to
$\Lambda_{\chi}$ and with kernel
\[
 K
(z,w) = \sum_{k=0}^\infty a_k z^k \bar w^k
e^{-\lambda|z|^\alpha} e^{-\lambda|w|^\alpha}, \]
where
$$\left( a_k \right)^{-1}
= \left\{ \begin{array}{ll} \infty 
		& \mbox{ if } 2k+2\chi > \alpha			\\
2\pi\int_0^\infty r^{2k+2\chi-1}
 e^{-2\lambda r^\alpha}{\rm d}r
	 	& \mbox{ if } 2k+2\chi< \alpha			\\
\pi\left(\frac{1}{\alpha \lambda}+ 
e^{2\xi/\alpha}\left(\frac{1}{\lpp}
+\frac{1}{\lmm} \right)\right)
		& \mbox{ if } 2k+2\chi = \alpha		
\end{array} 
\right. 
.$$
The number of particles 
$\# \mathbb G$
of $\mathbb G$ 
belongs to the interval 
$[\alpha/2 - \chi, \alpha/2-\chi+1]$.
More precisely, 
\[\# \mathbb G
=\lceil \alpha/2 - \chi \rceil
\quad \mbox{ if }
\quad \alpha/2 - \chi \notin 
\mathbb Z,\]
while
$\# \mathbb G \in 
\{\alpha/2-\chi,\, \alpha/2-\chi + 1\}$
with
\[\mathbb P\left(\# \mathbb G 
= \alpha/2 - \chi +1 \right)
=\left(1+\alpha\lambda
e^{2\xi/\alpha}
\left(\frac{1}{\lp} + \frac{1}{\lm}
\right) \right)^{-1}
\mbox{ if }
\quad \alpha/2 - \chi \in 
\mathbb Z.
\]
%$$\tilde K_n(z,w) = \frac{1}{n^{2/\alpha}} K_n \left(\frac{z}{n^{1/\alpha}}, \frac{w}{n^{1/\alpha}} \right).$$
\end{proposition}

\begin{remark}[Finite Coulomb gas]
Notice that $\mathbb G$, conditioned
to having a fixed number of particles, 
can be thought of as a finite
Coulomb gas confined
by a power-law potential plus a potential multiple of
$\log|\cdot|$.
\end{remark}

\begin{proof}
Notice that 
$\{n^{1/\alpha} x_k^{(n)}: k \in \{1,\dots,n\}\}$
is a determinantal point process 
on $\mathbb C$ associated
to the measure $\Lambda_{\kappa_n - n}$ and to the
kernel
	\[K_n(z,w) = \sum_{k=0}^{n-1} a_k^{(n)}
z^k \bar w^k 
e^{-\kappa_n U\left(\frac{|z|}{n^{1/\alpha}} \right)}
e^{-\kappa_n U\left(\frac{|w|}{n^{1/\alpha}} \right)},
\]
where
\[
\left(a_k^{(n)} \right)^{-1} = 
\int_{\mathbb C}|z|^{2k}
e^{-2\kappa_n
U\left( \frac{|z|}{n^{1/\alpha}} \right)}
{\rm d}\Lambda_{\kappa_n-n}(z)
=
2\pi
\int_0^\infty
r^{2k+2\kappa_n - 2n -1} 
e^{-2\kappa_n U\left(\frac{r}{n^{1/\alpha}} \right)}
{\rm d}r.\]
Since the reference measure is different
for each $n$ we will consider, instead, the fixed
reference measure $\Lambda_{\chi-\varepsilon}$,
for some (choose any) 
$\varepsilon \in (0, \chi)$.
The kernel will be
\[K_n(z,w) = \sum_{k=0}^{n-1} a_k^{(n)}
z^k \bar w^k 
e^{-\kappa_n U\left(\frac{|z|}{n^{1/\alpha}} \right)}
e^{-\kappa_n U\left(\frac{|w|}{n^{1/\alpha}} \right)}
|z \bar w|^{\kappa_n - n - \chi + \varepsilon},
\]
By \cite[Proposition 3.10]{ShiraiTakahashi}, 
to obtain \eqref{eq:MinFiniteOne}
we should
understand the limit of
the sequence of continuous (for $n$ large enough)
functions
$\{K_n\}_{n \geq 1}$.
Since $\lim_{r \to 0}\frac{1}{r^\alpha}U(r)  
= \lambda \in (0, \infty)$ we already have 
that,
uniformly on compact sets,
	\[\kappa_n U \left( \frac{|z|}{n^{1/\alpha}} 
	\right) \to \lambda|z|^{\alpha}.\]
Since $\kappa_n - n \to \chi$ we know that
$|z|^{\kappa_n - n - \chi + \varepsilon}
\to |z|^{\varepsilon}$
uniformly on compact sets.
Then
what is left to prove is that,
uniformly on compact sets,	
\[\sum_{k=0}^{n-1} a_k^{(n)}
z^k \bar w^k
\to
\sum_{k=0}^\infty a_k z^k \bar w^k.\]
This will be done by showing the convergence
of the coefficients, by 
properly bounding them and then
by applying Lebesgue's dominated convergence theorem.
%\begin{enumerate}
%\item 
%Prove that
%$\lim_{n \to \infty} a_k^{(n)}= a_k$.
%\item
%Find a sequence
%$\{A_k\}_{k \in \mathbb N}$
%such that
%$\sum_{k=0}^\infty A_k r^k$ converges
%for every $r \geq 0$ and
%such that $a_k^{(n)} \leq A_k$ for every $n$
%and $k < n$.
%\item Use Lebesgue's dominated convergence theorem
%to conclude.
%\end{enumerate}
%{\bf Step 1.} 
%This step is the more intuitive
%one.
We want to find the limit,
as $n$ goes to infinity, of
\begin{equation}
\label{eq:akIntegral}
\left( 2\pi a_k^{(n)} \right)^{-1} = \int_0^\infty
r^{2k+2\kappa_n - 2n-1} 
e^{-2 \kappa_n U\left(\frac{r}{n^{1/\alpha}} \right)}
{\rm d}r
=
n^{(2k + 2\chi)/\alpha}\int_0^\infty
r^{2k + 2\kappa_n - 2n -1} 
e^{-2 \kappa_n U\left(r \right)}
{\rm d}r
\end{equation}
By Laplace's method
the dominant part will be around
the minima of $U$ (i.e., $r = 0$ and $r=1$).
This suggests us to split the right-hand side
integral in \eqref{eq:akIntegral} as
\[\int_0^\infty
= \int_0^{1/2}
+ \int_{1/2}^1
+ \int_1^2
+ \int_2^\infty.\]
%We know that there exists 
%$\varepsilon \in (0,1)$ such that
%\begin{itemize}
%\item 
%	$\varepsilon \lambda r^\alpha \leq V(r)	$ 
%for $r \leq 1/2$,
%\item 
%$\varepsilon\, \lm  (1 - r)	
%\leq V(r) $
%for $r \in [1/2,1]$,
%\item 
%$\varepsilon\, \lp  (r - 1)	
%\leq V(r) $
%for $r \in [1, 2]$ and
%\item
%$\varepsilon \log r \leq V(r)$
%for $r \geq 2$.
%\end{itemize}
%These properties help us to control the integrands
%and apply Lebesgue's dominated convergence theorem.
%Notice that they
%only involve \textit{lower bounds}
%of $V$ and not upper bounds.
We begin by understanding the integral from 
$0$ to $1/2$. The main contribution comes
from $r = 0$ so that we perform
a change of variables and apply
Lebesgue's dominated convergence theorem 
(by controlling
the potential $U(r)$ from below by a multiple of 
$r^\alpha$),
\begin{align*}
n^{(2k + 2\kappa_n - 2n)/\alpha} 
\int_0^{1/2}
r^{2k+2\kappa_n - 2n -1} 
e^{-2\kappa_n U\left(r \right)}
{\rm d}r					
&=
\int_0^{\frac{n^{1/\alpha}}{2}}
r^{2k+2\kappa_n - 2n -1} 
e^{-2\kappa_n U\left(\frac{r}{n^{1/\alpha}} \right)}
{\rm d}r			
			\\
&\xrightarrow[n \to \infty]{}
\int_0^\infty
r^{2k+2\chi-1} 
e^{-2\lambda r^\alpha }
{\rm d}r.
\end{align*}
For the integral from $1/2$ to $1$ we notice
that, by applying
Lebesgue's dominated convergence theorem
(by controlling the potential from below by
a multiple of $1 - r$),
\begin{align*}
	n
	\int_{1/2}^{1} r^{2k+2\kappa_n - 2n -1} 
	e^{-2\kappa_n U\left( r\right)} 
	{\rm d}r
	&=
	\int_{0}^{n/2}
	\left(1-\frac{r}{n} \right)^{2k+2\kappa_n - 2n-1} 
	e^{-2\kappa_n U\left(1-\frac{r}{n}\right)} {\rm d}r
	\\	
	&\xrightarrow[n \to \infty]{}
	\int_{0}^{\infty}
	e^{-2 \lmm r} {\rm d}r	
	=\frac{1}{2 \lm}	
\end{align*}
which implies that
	\[n^{(2k+2\kappa_n -2n)/\alpha}
	\int_{1/2}^{1} r^{2k+2\kappa_n
	-2n-1} 
	e^{-2 \kappa_n U\left( r\right)} {\rm d}r
	\xrightarrow[n \to \infty]{}
	\left\{ \begin{array}{ll}
	\infty & \mbox{ if } 2k + 2\chi  > \alpha	\\
	e^{2\xi/\alpha}\frac{1}{2 \lmm} 
			& \mbox{ if } 2k + 2\chi = \alpha	\\
	0 & \mbox{ if } 2k + 2\chi< \alpha	
	\end{array} \right.
	.\]
Similarly, for the integral from $1$ to $2$ we have
	\[n^{(2k+2\kappa_n - 2n)/\alpha}
	\int_1^{2} r^{2k+2\kappa_n - 2n -1} 
	e^{-2\kappa_n U\left( r\right)} {\rm d}r
	\xrightarrow[n \to \infty]{}
	\left\{ \begin{array}{ll}
	\infty & \mbox{ if } 2k + 2\chi > \alpha	\\
	e^{2\xi/\alpha}\frac{1}{2 \lpp} 
			& \mbox{ if } 2k + 2\chi = \alpha	\\
	0 & \mbox{ if } 2k + 2\chi  < \alpha	
	\end{array} \right. 
	.\]
The last integral, the one from $2$ to $\infty$,
will go to zero since there is $\varepsilon > 0$
such that
$\varepsilon \log r \leq U(r)$
for $r \geq 2$ which implies that
\begin{align*}
n^{(2k + 2\kappa_n - 2n)/\alpha}\int_{2}^\infty
& r^{2k+2\kappa_n - 2n -1} 
e^{-2 \kappa_n U\left(r \right)}{\rm d}r			\\
&\leq
n^{(2k + 2\kappa_n - 2n)/\alpha}\int_{2}^\infty
r^{2k+2\kappa_n - 2n-1} 
e^{-2\kappa_n \varepsilon \log r}{\rm d}r 
	\xrightarrow[n \to \infty]{} 0.
\end{align*}
In summary, we have obtained
that
\[\lim_{n \to \infty} (a_k^{(n)})^{-1}
= \left\{ \begin{array}{ll}
	\infty 
		& 
		\mbox{ if }	2k+2\chi > 	\alpha	
								\\
	2\pi\int_0^\infty r^{2k+2\chi-1} 
	e^{-2\lambda r^\alpha}{\rm d}r
 		& 
 		\mbox{ if }	2k+2\chi < \alpha	
 								\\
	2\pi\int_0^\infty r^{2k+2\chi-1} 
	 e^{-2\lambda r^\alpha}{\rm d}r
	+
	\pi e^{2\xi/\alpha}
	 \left( \frac{1}{ \lpp}
	+\frac{1}{ \lmm} \right)
		 & 
		 \mbox{ if } 2k+2\chi = \alpha		
\end{array} 
\right.\]
To control the coefficients we notice that
%Take
%$\bar \lambda > \lambda$ such that 
%	$\bar \lambda r^\alpha \geq V(r)$
%for $r \leq 1$.	 Then,
%since $\{\kappa_n/n\}_{n \geq 2}$ 
%is a bounded sequence, we have
%\[2\kappa_n V\left( \frac{r}{n^{1/\alpha}}\right)
%\leq 2\frac{\kappa_n}{n}\bar \lambda r^{\alpha}
%\leq C r^{\alpha}\quad 
%\mbox{ if } r 
%\leq n^{1/\alpha}\] 
%for some constant $C>0$. Notice that
%we need
%here an \textit{upper bound} of $V$
%and not a lower bound
%%to control the series.
%Since $k \leq n$ we obtain,
%for $\bar \chi \in (0, \chi)$ and $n$ large enough,
%\[
%	\int_0^\infty  r^{2k+2\kappa_n - 2n-1} 
%	e^{-2 \kappa_n V\left( \frac{r}{n^{1/\alpha}}
%	 \right)} {\rm d}r 
%	\geq
%	\int_0^{k^{1/\alpha}} r^{2k+2\bar \chi-1} 
%	e^{-C r^\alpha} {\rm d}r.
%\]
by the behavior of $U$ near the origin
and since $U$ is bounded in $[0,1]$, there exists $C>0$ and $\bar \chi > 0$ such that
if we define
\[A_k = 
	\left(2\pi\int_0^{k^{1/\alpha} } 
	r^{2k+ 2\bar \chi - 1} 
	e^{-C r^\alpha} {\rm d}r \right)^{-1}\]
we have
\[A_k \leq a_k^{(n)} \mbox{ for } k \in 
\{0,\dots,n-1\}.\]
By Laplace's method,
$\lim_{k \to \infty}
\frac{1}{k}\log \left[ (A_k)^{-1} \right]
=\infty$
so that
$\sum_{k=0}^\infty A_k x^k$  has an infinite
radius of convergence. Finally,
if $r > 0$ and $|z|, |w| \leq r$ we have
$$\left|\sum_{k=0}^{n-1} a_k^{(n)}
z^k \bar w^k
-
\sum_{k=0}^\infty a_k z^k \bar w^k
\right|
\leq
\sum_{k=0}^{\infty} |a_k^{(n)} - a_k|
|z|^k |\bar w|^k
\leq
\sum_{k=0}^{\infty} |a_k^{(n)} - a_k|
r^{2k}
 $$	
where $a_k^{(n)} $ is zero
if $k \geq n$.
By noticing that $|a_k^{(n)} - a_k|r^{2k}
\leq 
2A_k r^{2k}$ 
we apply Lebesgue's dominated convergence
theorem to conclude.

\vspace{1.5mm}
\noindent
\textbf{Number of particles.} 
The
assertion about the
number of particles 
is an immediate consequence
of \cite[Theorem 4.5.3]{HoughKrishnapurPeresVirag}.
More precisely, when
$\alpha/2-\chi \notin \mathbb Z$,
the operator defined by $ K$
is a projection 
onto a space of dimension
$\lceil \alpha/2-\chi \rceil$,
while if 
$\alpha/2-\chi \in \mathbb Z$,
the unique
non-zero eigenvalue less than one
is the inverse of
$1+\alpha\lambda
e^{2\xi/\alpha}
(\frac{1}{\lp} + \frac{1}{\lm}
)$.

\end{proof}

Having all the ingredients,
we proceed to the proof of Theorem \ref{th:MaxAnnulus}
and Theorem \ref{th:MaxFiniteParticles}.

\begin{proof}[Proof of Theorem \ref{th:MaxAnnulus}]
We shall use Proposition \ref{propC:MinA} with
 $U:[0,\infty) \to \mathbb R$ defined by
\[U(r) = 
\left\{ \begin{array}{ll}
V(1/r) + \log r & \mbox { if } r > 0 \\
0 & \mbox { if } r = 0
\end{array}
\right. .
\]
Notice that $U$ is a 
non-negative lower semicontinuous function
and that 
$\mathcal A = [\widetilde R^{-1}, R^{-1}]$.
By Lemma \ref{lem:Inversion},
$(1/{x_1^{(n)}},\dots,1/{x_n^{(n)}})$
has a law proportional to
\[
 \prod_{i<j}^n |x_i - x_j|^2 
e^{-2\kappa_n \sum_{i=1}^n U(|x_i|)}
{\rm d}\Lambda_{\kappa_n - n}(x_1)\dots
{\rm d}\Lambda_{\kappa_n - n}(x_n),
\]
where
\[ {\rm d}\Lambda_{\kappa_n - n}(x) 
= |x|^{2(\kappa_n - n -1)} 
{\rm d}\ell_{\mathbb C}(x) .\]
Proposition \ref{propC:MinA} tells us that
\[\{1/x_k^{(n)}: k \in \{1,\dots,n\}
\mbox{ and }
|1/x_k^{(n)}|< R^{-1}\} \xrightarrow[n \to \infty]
{\mathrm{law}} \mathbb M\]
where $\mathbb M$ is
the (inclusion into the open unit
disk of radius $R^{-1}$ of a) 
determinantal point process
on $\{ x \in \mathbb C : \, \widetilde R^{-1}
\leq |x| < R^{-1}\}$ 
with kernel
\[ K
(z,w) = \sum_{k=0}^\infty a_k z^k \bar w^k,
 \ \ \
\left( a_k \right)^{-1}
=2 \pi 
\int_{\widetilde R^{-1}}^{R^{-1}}
r^{2k+2\chi-1} {\rm d}r,\]
with respect to the
reference measure
$\Lambda_{\chi}$.
In particular,
%\begin{equation}
%\label{eq:LimitProcessModuliAnnulusProof}
\[\{|1/x_k^{(n)}|: k \in \{1,\dots,n\}
\mbox{ and }
|1/x_k^{(n)}|< R^{-1}\} \xrightarrow[n \to \infty]
{\mathrm{law}} 
\{|x|: x\in \mathbb M\}.\]
%\end{equation}
Suppose that $\{T_k\}_{k \geq 0}$
is a family
of independent random variables
in $[\tilde R^{-1},R^{-1})$ such that 
the density of $T_k$
is given by
$r \in [\tilde R^{-1},R^{-1}) \mapsto 
 2\pi a_k r^{2k+2\chi -1} $.
Then, by
 \eqref{eq:LawEqualityModulus},  we obtain that
$\{|x|: x\in \mathbb M\} \sim 
\{T_k: k \geq 0\}$.
By defining $Y_k = 1/T_k$ and taking
the image by $x \mapsto 1/x$ we get
\[\{|x_k^{(n)}|: k \in \{1,\dots,n\}
\mbox{ and }
|x_k^{(n)}| > R\} \xrightarrow[n \to \infty]
{\mathrm{law}} 
\{Y_k: k \geq 0\},\]
which is what we wanted to prove.
For the assertion about the maxima, we begin
with
\[\min\{|1/x_k^{(n)}|: k \in \{1,\dots,n\}
\mbox{ and }
|1/x_k^{(n)}|< R^{-1}\} \xrightarrow[n \to \infty]
{\mathrm{law}} 
\inf \{T_k: k \geq 0\},\]
where the minimum is $R^{-1}$
if the set is empty. 
Since
\[\min\{|1/x_k^{(n)}|: k \in \{1,\dots,n\}
\mbox{ and }
|1/x_k^{(n)}|< R^{-1}\} 
= \min\{|1/x_k^{(n)}|: k \in \{1,\dots,n\}\}
\wedge R^{-1} \]
and since $\inf \{T_k: k \geq 0\}
< R^{-1}$ almost surely,
we obtain that
\[\inf\{|1/x_k^{(n)}|: k \in \{1,\dots,n\}
\} \xrightarrow[n \to \infty]
{\mathrm{law}} 
\inf \{T_k: k \geq 0\},\]
and the proof may be completed by taking the image
by $x\mapsto 1/x$.
\end{proof}

\begin{proof}[Proof of Theorem \ref{th:MaxFiniteParticles}]
It follows the same steps
as the proof of Theorem
\ref{th:MaxAnnulus} above by using
Proposition \ref{propC:MinFiniteOne}
instead of
Proposition \ref{propC:MinA}.
\end{proof}

\section{Proofs of the Gumbel behavior}
\label{sec:ProofsGumbel}

In this section we will give the proofs
of the statements from
Subsection \ref{sub:Gumbel}. Namely,
we give
the proofs of
Theorem \ref{th:Strongly}, Theorem
\ref{th:WeaklyKappa}, Proposition
\ref{prop:StronglyKernelLimit} and
Proposition \ref{prop:WeaklyKernelLimit}.

We begin by describing, in Subsection
\ref{sub:IdeaOfProofGumbel},
the proofs of
Theorem \ref{th:Strongly} and Theorem
\ref{th:WeaklyKappa}. 
In Subsection \ref{sub:ControlProbabilities},
we describe some lemmas that
allow us to properly justify the steps. 
The convergence of the intensities,
which is the main step of the proofs, is provided
in Subsection \ref{sub:IntensitiesConvergence}.
Finally, in Subsection \ref{sub:ProofsOfGumbelBehavior},
we complete the proofs of
the Gumbel fluctuations. 

\subsection{Idea of the proofs of 
Theorem \ref{th:Strongly} and Theorem
\ref{th:WeaklyKappa}}

\label{sub:IdeaOfProofGumbel}

For each $n \geq 1$, let us consider
$n$ independent positive random variables
$Y_0^{(n)},\dots,Y_{n-1}^{(n)}$
such that 
\[Y_k^{(n)} \sim b_k^{(n)} 
r^{2k+1}e^{-2\kappa_n V(r)} 1_{(0,\infty)}(r)
{\rm d} r,
\quad 
\mbox{ where } \quad
b_k^{(n)} 
=\left(\int_{0}^\infty
s^{2k+1}e^{-2\kappa_n V(s)}
\mathrm d s
\right)^{-1}.\]
By
Kostlan's idea stated in 
Subsection \ref{sub:Kostlan}, we have that
\begin{equation}
\label{eq:KostlanStatementInGumbel}
\{|x_1^{(n)}|,\dots,|x_n^{(n)}|\}
\sim
\{Y_0^{(n)},\dots,Y_{n-1}^{(n)}
\}.
\end{equation}
Let us begin by
studying 
$M_n = \max \left\{|x_1^{(n)}|,\dots,|x_n^{(n)}|
\right\}$.  By
\eqref{eq:KostlanStatementInGumbel},
we have
\[\mathbb P
\left(M_n \leq m\right)
= \prod_{k=0}^{n-1}
\left(1-b_k^{(n)} \int_{m}^\infty
r^{2k+1} e^{-2\kappa_n V(r)}
\mathrm d r \right).\]
By using that
$\log(1-x) = -x + O(x^2)$,
we can see that if $m_n$ goes to $1$ slowly enough 
such that
\begin{equation}
\label{eq:UniformConvCondition}
\sup_{k \in \{0,\dots,n-1\}}
\left(b_k^{(n)}\int_{m_n}^\infty
r^{2k+1} e^{-2\kappa_n V(r)}
\mathrm d r \right)
\xrightarrow[n \to \infty]{} 0,
\end{equation}
we have that
\begin{equation}
\label{eq:LogConvergesIfIntensityDoes}
\log \mathbb P(M_n \leq m_n)
\mbox{ converges if and only if } 
- \bigintss_{m_n}^\infty
\sum_{k=0}^{n-1}
b_k^{(n)} r^{2k+1} e^{-2\kappa_n 
V(r)}
\mathrm d r \mbox{ converges}
\end{equation}
and their limit is the same.
We recognize the first intensity of
the process of moduli
%\begin{equation}
%\label{eq:RadialIntensity}
\[\bar \rho_n(r) = 
\sum_{k=0}^{n-1}
b_k^{(n)} r^{2k+1} e^{-2\kappa_n V(r)}\]
%\end{equation}
as minus the integrand. 
If
 $m_n = c_n + d_n x$
 for some 
$\{c_n\}_{n \geq 1}$ and
$\{d_n\}_{n \geq 1}$,
we would write
\[\int_{m_n}^\infty
\bar  \rho_n(r)
\mathrm d r
=
\int_{x}^\infty 
d_n \, 
\bar \rho_n\left (c_n + d_n s \right)
\mathrm d s.\]
So, we only need to find
\[\lim_{n\to\infty} d_n \, 
\bar \rho_n\left (c_n + d_n s \right)
= \rho(s)\]
and control the
integrand to obtain that
\[\lim_{n \to \infty}
\mathbb P
\left(M_n \leq c_n + d_n x\right)
=
\exp \left(-\int_x^\infty
\rho(s) \mathrm d s \right).\]
Notice that $s \mapsto
d_n \, 
\bar \rho_n\left (c_n + d_n s \right)$ 
is 
the intensity function
of the point process
%\begin{equation}
%\label{eq:RescaledRadial}
\[\big\{
d_n^{-1}(|x_1^{(n)}|
-c_n) , \dots,
d_n^{-1} (|x_n^{(n)}|
-c_n)
\big\}\]
%\end{equation}
and 
that
$\exp \left(-\int_x^\infty
\rho(s) \mathrm d s \right)$
is the
cumulative distribution function
of the maximum
of a Poisson point process
with intensity $\rho$.
This suggests how to proceed
for the study of the point process
formed by $W_k^{(n)} = d_n^{-1}
(Y_k^{(n)} - c_n)$. It is motivated
by \cite{Rider2,Seo}.
Let us consider $\ell$ mutually disjoint
bounded
measurable subsets
$I_1,\dots,I_\ell$ of $\mathbb R$ and
define the random variables
\[N_i^{(n)} = \sum_{k=0}^{n-1} 
1_{I_i}(W_k^{(n)}).\]
We shall study
$\mathbb P \big(N_1^{(n)}=m_1,
\dots,N_\ell^{(n)}=m_\ell\big)$.
For this, by using that
\[1_{N_i^{(n)} = m_i}
= 
\left. \frac{1}{m_i!}
\left(\frac{\mathrm d}{\mathrm d \lambda_i}
\right)^{m_i}
\left[
\prod_{k=0}^{n-1}\left(1+\lambda_i
1_{I_i}(W_k^{(n)}) \right)\right]
\right|_{\lambda_i = -1}\]
we can notice that
\begin{align*}
1_{N_1^{(n)} = m_1  ,\dots,N_\ell^{(n)} = m_\ell}&	
\\
&\hspace{-20mm}= 
\left. \frac{1}{m_1!}
\dots \frac{1}{m_\ell!}
\left(\frac{\mathrm d}{\mathrm d \lambda_1}
\right)^{m_1} 
\dots
\left(\frac{\mathrm d}{\mathrm d \lambda_\ell}
\right)^{m_\ell}
\left[
\prod_{k=0}^{n-1}\left(1+
\sum_{i=1}^\ell \lambda_i
1_{I_i}(W_k^{(n)})
\right)\right]
\right|_{\lambda_1=\dots=\lambda_\ell = -1}.
\end{align*}
Then, we can write
\begin{align*}
\mathbb P
(N_1^{(n)} = m_1  ,\dots,N_\ell^{(n)} = m_\ell)
&	
\\
&\hspace{-40mm}= 
\left. \frac{1}{m_1!}
\dots \frac{1}{m_\ell!}
\left(\frac{\mathrm d}{\mathrm d \lambda_1}
\right)^{m_1}
\dots
\left(\frac{\mathrm d}{\mathrm d \lambda_\ell}
\right)^{m_\ell}
\left[
\prod_{k=0}^{n-1}\left(1+
\sum_{i=1}^\ell \lambda_i
\mathbb P(W_k^{(n)}\in I_i) 
\right)\right]
\right|_{\lambda_1=\dots=\lambda_\ell = -1}.
\end{align*}
For each $n$, define 
the polynomial function
$F_n:\mathbb C^\ell \to \mathbb C^\ell$ by
\[F_n(\lambda_1,\dots,\lambda_\ell)=
\prod_{k=0}^{n-1}\left(1+
\sum_{i=1}^\ell \lambda_i
\mathbb P(W_k^{(n)}\in I_i) 
\right).\]
Notice that
$|F_n(\lambda_1,\dots,\lambda_\ell)|
\leq F_n(r,\dots,r)$
whenever $|\lambda_1|,\dots,|\lambda_\ell|
\leq r$. By Montel's theorem, 
once we show that $F_n$ 
converges pointwise, it also converges
uniformly on compact sets of $\mathbb C^\ell$.
Since we are assuming
\eqref{eq:UniformConvCondition}, we have that
\[
\sum_{k=0}^{n-1}\log \left(1+
\sum_{i=1}^\ell \lambda_i
\mathbb P(W_k^{(n)}\in I_i) 
\right)
 \mbox{ converges if and only if } 
\sum_{k=0}^{n-1}\sum_{i=1}^\ell \lambda_i
\mathbb P(W_k^{(n)}\in I_i) \mbox{ converges}
\]
and their limit is the same. Here the logarithm
is well-defined
for $n$ large enough by the usual power series.
Then, we need to study the limit of
\[
\sum_{k=0}^{n-1}
\mathbb P(W_k^{(n)}\in I_i) 
=\int_{I_i} 
d_n \, 
\bar \rho_n\left (c_n + d_n s \right)
\mathrm d s\]
which, since 
$d_n \, 
\bar \rho_n\left (c_n + d_n s \right)
\to \rho(s)$
and since 
$d_n \, 
\bar \rho_n\left (c_n + d_n s \right)$
has been properly
bounded to study the maxima,
could be shown to converge to
$\int_{I_i} \rho(s) \mathrm d s$.
We get
\[F_n(\lambda_1,\dots,\lambda_\ell)
\xrightarrow[n \to \infty]{}
e^{\sum_{i=1}^\ell
\lambda_i \int_{I_i}\rho(s) \mathrm d s}.\]
Taking the corresponding derivatives
we obtain that
\[\mathbb P
(N_1^{(n)} = m_1  ,\dots,N_\ell^{(n)} = m_\ell)
\xrightarrow[n \to \infty]{}
\prod_{i=1}^\ell 
\left(\frac{\left(\int_{I_i}\rho(s) \mathrm d s\right)^{m_i}}{m_i!}
e^{-\int_{I_i}\rho(s) \mathrm d s}\right)
\]
which implies that
\[\{W_k^{(n)}: k \in \{0,\dots,n-1\}\}
\xrightarrow[n \to \infty]{\mathrm{law}}
\mathcal P,\]
where $\mathcal P$ is a Poisson point process
with intensity $\rho$.

Now, we describe the lemmas 
that allow us to justify the steps
described above.

\subsection{Control of the probabilities}

\label{sub:ControlProbabilities}

Next 
lemma, together with Lemma \ref{lemG:KernelOutside},
imply
\eqref{eq:UniformConvCondition}
which 
justifies \eqref{eq:LogConvergesIfIntensityDoes}
for Theorem \ref{th:Strongly}.

\begin{lemma}[Uniform convergence of probabilities
for a power-law behavior]
\label{lemG:AlphaPotentialUniformBound}
Let $\alpha \geq 1$. 
Suppose that there exists
$\lambda, \bar \lambda, \ell > 0$ such that
\[\bar \lambda 
|r-1|^{\alpha} \leq V(r) - \log r
\leq \lambda |r-1|^{\alpha}\]
when $|r-1|\leq \ell$
and suppose that
 \[ \lim_{n \to \infty}
 \frac{\kappa_n}{n} = 1
 \quad \mbox{ and } \quad
 n-\kappa_n = 
	o \left( \kappa_n^{1/\alpha}
	\right).\]
Define
\[b_k^{(n)} 
=
\left(\int_0^\infty
s^{2k + 1} e^{-2\kappa_n V(s)} 
\mathrm d s\right)^{-1}.
\]
If $\{m_n\}_{n \geq 1}$ is 
a sequence that satisfies
\[n^{1/\alpha} (m_n - 1) 
\xrightarrow[n \to \infty]{}
\infty,\]
then
\[
\sup_{k \in \{0,\dots,n-1\}}
\left(
b_k^{(n)} \int_{m_n}
^{1+\ell}
r^{2k+1} e^{-2\kappa_n V(r)}
\mathrm d r
\right)
\xrightarrow[n \to \infty]{} 0.\]

\end{lemma}

\begin{proof}[Proof of Lemma 
\ref{lemG:AlphaPotentialUniformBound}]

To bound the coefficients $b_k^{(n)}$, we use that
\begin{align*}
\int_0^\infty
s^{2k + 1} e^{-2\kappa_n V(s)} 
\mathrm d s
&\geq 
\int_{1-\ell}^{1+\ell}
s^{2k -2\kappa_n + 1 } 
e^{-2\kappa_n \lambda |s-1|^{\alpha}} 
\mathrm d s					\\
&=
\int_{-\ell}^{\ell}
(1+s)^{2k -2\kappa_n + 1 } 
e^{-2\kappa_n \lambda |s|^\alpha} 
\mathrm d s					\\
&=
\int_{0}^{\ell}
\left((1+s)^{2k -2\kappa_n + 1 } 
+
(1-s)^{2k -2\kappa_n + 1 } \right)
e^{-2\kappa_n \lambda s^\alpha} 
\mathrm d s					\\
&\geq \int_{0}^{\ell}
e^{-2\kappa_n \lambda s^\alpha} 
\mathrm d s					\\
&=\frac{1}{\kappa_n^{1/\alpha}}
 \int_{0}^{\kappa_n^{1/\alpha} \ell }
e^{-2\lambda s^\alpha} 
\mathrm d s.
\end{align*}
For the integral part we can see
that
\begin{align*}
\int_{m_n}
^{1+\ell}
r^{2k+1} e^{-2\kappa_n V(r)}
\mathrm d r
&\leq 
\int_{m_n}
^{1+\ell}
r^{2k - 2\kappa_n +1} e^{-2
\kappa_n
\bar \lambda |r-1|^\alpha}
\mathrm d r					\\
&=
\frac{1}{\kappa_n^{1/\alpha}}
\int_{ \kappa_n^{1/\alpha} (m_n-1)}
^{\kappa_n^{1/\alpha} \ell}
\left( 1 +
\frac{r}{\kappa_n^{1/\alpha}}
\right)^{2k - 2\kappa_n +1} 
e^{-2
\bar \lambda r^\alpha}
\mathrm d r					\\
&\leq
\frac{1}{\kappa_n^{1/\alpha}}
\int_{\kappa_n^{1/\alpha} (m_n-1)}
^{ \kappa_n^{1/\alpha} \ell}
\left( 1 +
\frac{r}{\kappa_n^{1/\alpha}}
\right)^{2n - 2\kappa_n -1} 
e^{-2
\bar \lambda r^\alpha}
\mathrm d r					\\		
&\leq
\frac{1}{\kappa_n^{1/\alpha}}
\int_{\kappa_n^{1/\alpha} (m_n-1)}
^{ \kappa_n^{1/\alpha} \ell}
\exp \left(
\left|\frac{2n - 2\kappa_n -1}{
\kappa_n^{1/\alpha}}\right| r \right)
e^{-2
\bar \lambda r^\alpha}
\mathrm d r					\\
&= o\left(\frac{1}{\kappa_n^{1/\alpha} }\right)	
\quad \mbox{ uniformly on } k
\end{align*}
since $(2n-2\kappa_n - 1)/
\kappa_n^{1/\alpha} \to 0$ and
$\kappa_n^{1/\alpha} (m_n - 1)
\to \infty$. This completes the proof.

\end{proof}

Next lemma is
the step in \eqref{eq:UniformConvCondition}
which
justifies \eqref{eq:LogConvergesIfIntensityDoes}
for Theorem \ref{th:WeaklyKappa}.

\begin{lemma}[Uniform convergence of probabilities
for a `logarithm' potential ]
\label{lemG:LogPotentialUniformBound}
Suppose that
\[V(r) = \log r \quad \mbox{ for } r \geq 1.\]
and that
 \[ \lim_{n \to \infty}
 \frac{\kappa_n}{n} = 1
 \quad \mbox{ and } \quad
 \lim_{n \to \infty}(\kappa_n - n) =
\infty.\]
Define
\[b_k^{(n)} 
=
\left(\int_0^\infty
s^{2k + 1} e^{-2\kappa_n V(s)} 
\mathrm d s\right)^{-1}.
\]
If $\{m_n\}_{n \geq 1}$ is 
a sequence that satisfies
\[(\kappa_n - n) (m_n - 1) 
\xrightarrow[n \to \infty]{}
\infty,\]
then
\[
\sup_{k \in \{0,\dots,n-1\}}
\left(
b_k^{(n)} \int_{m_n}
^{\infty}
r^{2k+1} e^{-2\kappa_n V(r)}
\mathrm d r
\right)
\xrightarrow[n \to \infty]{} 0.\]

\end{lemma}

\begin{proof}

We use that
\[ b_k^{(n)} \leq 
\left(\int_1^\infty
s^{2k + 1} e^{-2\kappa_n V(s)} 
\mathrm d s\right)^{-1} = 2(\kappa_n - k -1).\]
Then, we may to notice that
\begin{align*}
b_k^{(n)} \int_{m_n}
^{\infty}
r^{2k+1} e^{-2\kappa_n V(r)}
\mathrm d r
&\leq
2(\kappa_n - k-1)
\int_{m_n}^{\infty}
r^{2k+1} e^{-2\kappa_n V(r)}
\mathrm d r									
											\\
&\leq
m_n^{2(n - \kappa_n)}				
\end{align*}
which goes to zero as $n \to \infty$
since
$(\kappa_n - n) (m_n - 1) 
\to
\infty$.
\end{proof}

We state now a lemma that helps
us neglect the intensity
outside a neighborhood of $1$.
It will be useful in the proof of
Theorem \ref{th:Strongly}.

\begin{lemma}[Intensity far from $1$]
\label{lemG:KernelOutside}
Let $V: [0,\infty) \to \mathbb R$
be a continuous function such that
\[
\quad V(1) = 0,
\quad \forall r > 1, \, V(r) > \log r, 
\quad \mbox{ and }
\quad \liminf_{r \to \infty}
\frac{V(r)}{\log r} > 1,\]
and suppose that
\[\lim_{n \to \infty}
\frac{\kappa_n}{n}=1.\]
Define
\[b_k^{(n)} 
=
\left(\int_0^\infty
s^{2k + 1} e^{-2\kappa_n V(s)} 
\mathrm d s\right)^{-1}.
\]
Then, for every $\varepsilon > 0$,
\[\sum_{k=0}^{n-1}
b_k^{(n)} 
\int_{1+\varepsilon}^\infty
r^{2k+1} e^{-2\kappa_n V(r)}
\mathrm d r
\to 0.\]

\end{lemma}
\begin{proof}

Choose $C>1$ such that
\[\forall r \geq 1+\varepsilon:
\, V(r) \geq C\log r\]
and choose $\delta>0$ such that
\[\forall r \in [1,1+\delta]:
\, V(r)\leq \log(1+\varepsilon)
\frac{C-1}{2}.\]
Since,
\[\int_0^\infty
s^{2k + 1} e^{-2\kappa_n V(s)} 
\mathrm d s 
\geq
\int_{1}^{1+\delta}
s^{2k + 1} e^{-\kappa_n 
\log(1+\varepsilon) (C-1)} 
\mathrm d s						
\geq
\delta 
(1+\varepsilon)^{-\kappa_n (C-1)}\]
we have
\begin{align*}
b_k^{(n)} 
r^{2k+1} e^{-2\kappa_n V(r)}
&\leq
\delta^{-1}(1+\varepsilon)^{\kappa_n (C-1)}
r^{2k + 1} r^{- 2 \kappa_n C}		\\
&\leq 
\delta^{-1}r^{\kappa_n(C-1)}
r^{2k + 1} r^{-2\kappa_n C}			\\
&=
\delta^{-1}r^{2k + 1} r^{-\kappa_n (C+1)}		\\
&\leq 
\delta^{-1}
r^{2n - 1} r^{-\kappa_n (C+1)}.
\end{align*}
By integrating,
taking the sum and taking the limit,
 the proof
is completed.
\end{proof}

Next lemma complements the previous
one by allowing us to neglect the 
intensity outside a shrinking neighborhood of 
$1$. It works when $V$
behaves like a power of $\alpha$
near both sides of $1$
and it will be used in the proof of
Theorem \ref{th:Strongly}

\begin{lemma}[Intensity near 1]
\label{lemG:AlphaKernelNear}
Let $\alpha \geq 1$. 
Suppose that there exists
$\lambda, \bar \lambda, \ell > 0$ such that
\[\bar \lambda 
|r-1|^{\alpha} \leq V(r) - \log r
\leq \lambda |r-1|^{\alpha}\]
when $|r-1|\leq \ell$
and suppose that
 \[ \lim_{n \to \infty}
 \frac{\kappa_n}{n} = 1
 \quad \mbox{ and } \quad
 n-\kappa_n = 
	o \left( \kappa_n^{1/\alpha}
	\right).\]
Define
\[b_k^{(n)} 
=
\left(\int_0^\infty
s^{2k + 1} e^{-2\kappa_n V(s)} 
\mathrm d s\right)^{-1}.
\]
Then, for every $\gamma > 0$,
\[\sum_{k=0}^{n-1}
b_k^{(n)} 
\int_{1+\frac{1}{(\log n)^\gamma}}^{1+\ell}
r^{2k+1} e^{-2\kappa_n V(r)}
\mathrm d r
\to 0.\]

\end{lemma}

\begin{proof}
By the proof
of Lemma
\ref{lemG:AlphaPotentialUniformBound} we
know that
\[ b_k^{(n)} \leq \widetilde C \kappa_n^{1/\alpha}\]
for some constant $\widetilde C>0$.
Then, we need to study
\begin{align*}\sum_{k=0}^{n-1}
\int_{1+\frac{1}{(\log n)^\gamma}}^{1+\ell}
r^{2k+1} e^{-2\kappa_n V(r)}
\mathrm d r										
&=
\int_{1+\frac{1}{(\log n)^\gamma}}^{1+\ell}
\frac{r^{2n} - 1}{r^2-1}r e^{-2\kappa_n V(r)}
\mathrm d r										\\
&\leq (\log n)^\gamma
\int_{1+\frac{1}{(\log n)^\gamma}}^{1+\ell}
r^{2n + 1} e^{-2\kappa_n V(r)}
\mathrm d r										\\
&\leq (\log n)^\gamma
\int_{1+\frac{1}{(\log n)^\gamma}}^{1+\ell}
r^{2n + 1 - 2 \kappa_n} e^{-2\kappa_n \bar \lambda
(r - 1)^\alpha}
\mathrm d r										\\
&\leq \frac{(\log n)^\gamma}{\kappa_n^{1/\alpha}}
\int_{\frac{\kappa_n^{1/\alpha}}{(\log n)^\gamma}}
^{\kappa_n^{1/\alpha}\ell}
\left(1+\frac{r}{\kappa_n^{1/\alpha}}\right)^{2n + 1 - 2 \kappa_n} e^{-2\bar \lambda
r^\alpha}
\mathrm d r										\\
&\leq \frac{(\log n)^\gamma}{\kappa_n^{1/\alpha}}
\int_{\frac{\kappa_n^{1/\alpha}}{(\log n)^\gamma}}
^{\kappa_n^{1/\alpha}\ell}
e^{r |2n + 1 - 2 \kappa_n|
\kappa_n^{-1/\alpha}} e^{-2\bar \lambda
r^\alpha}
\mathrm d r			.
\end{align*}
Since $(n-\kappa_n)/\kappa_n^{1/\alpha} \to 0$
we can conclude that there is 
$C > 0$ such that
\[\sum_{k=0}^{n-1}
\int_{1+\frac{1}{(\log n)^\gamma}}^{1+\ell}
r^{2k+1} e^{-2\kappa_n V(r)}
\mathrm d r	
\leq \frac{(\log n)^\gamma}{\kappa_n^{1/\alpha}}
\int_{\frac{\kappa_n^{1/\alpha}}{(\log n)^\gamma}}
^{\infty}
 e^{-C
r^\alpha}
\mathrm d r	,\]
for $n$ large enough. Since
\[\int_{\frac{\kappa_n^{1/\alpha}}{(\log n)^\gamma}}
^{\infty}
 e^{- C 
r^\alpha}
\mathrm d r
= e^{-C \frac{\kappa_n}{(\log n)^{\gamma \alpha}}
(1+ o(1))},\]
 we can complete the proof by using that
$(\log n)^\gamma e^{-C \frac{\kappa_n}{(\log n)^{\gamma \alpha}}
(1+ o(1))} \to 0$.
 
\end{proof}

The following lemma
helps us neglect the intensity 
outside a shrinking neighborhood of $1$
for $V$ that equals the logarithm
at the right of $1$.
It will be used in
the proof of Theorem \ref{th:WeaklyKappa}.

\begin{lemma}[Intensity near $1$ for
a `logarithm' potential]
\label{lemG:LogKernelNear}
Suppose that
\[V(r) = \log r \quad \mbox{ for }
r \geq 1\]
and that
 \[ \lim_{n \to \infty}
 \frac{\kappa_n}{n} = 1
 \quad \mbox{ and } \quad
 \lim_{n \to \infty}
 \left(\kappa_n - n \right)=  \infty.\]
Define
\[b_k^{(n)} 
=
\left(\int_0^\infty
s^{2k + 1} e^{-2\kappa_n V(s)} 
\mathrm d s\right)^{-1}.
\]
Then, for every $\gamma >0$,
\[\sum_{k=0}^{n-1}
b_k^{(n)} 
\int_{1+ \frac{\gamma}{\sqrt{\kappa_n - n}}}^{\infty}
r^{2k+1} e^{-2\kappa_n V(r)}
\mathrm d r
\to 0.\]

\end{lemma}

\begin{proof}
We use that
\[ b_k^{(n)} \leq 
\left(\int_1^\infty
s^{2k + 1} e^{-2\kappa_n V(s)} 
\mathrm d s\right)^{-1} = 2(\kappa_n - k -1).\]
Then, we need to study
\begin{align*}
\sum_{k=0}^{n-1} 2(\kappa_n - k-1)
\int_{1+\frac{\gamma}{
\sqrt{\kappa_n - n}}}^{\infty}
r^{2k+1} e^{-2\kappa_n V(r)}
\mathrm d r			
&=
\sum_{k=0}^{n-1}
\left(1+ \frac{\gamma}{\sqrt{\kappa_n - n}}
\right)^{2(k - \kappa_n + 1)}					
											\\
&\leq
\sqrt{\kappa_n - n}
\left(1+ \frac{\gamma}{\sqrt{\kappa_n - n}}
\right)^{2(n - \kappa_n + 1)}			
\end{align*}
which goes to zero as $n \to \infty$.
\end{proof}

\subsection{Convergence of the intensities}

\label{sub:IntensitiesConvergence}

\begin{proposition}[Intensity convergence
for a power-law behavior]
\label{prop:IntensityStrongly}
Under the conditions and notation of
Theorem \ref{th:Strongly}, if we define
\[b_k^{(n)} = \left(\int_0^\infty 
x^{2k+1} e^{-2\kappa_n V(x)} \mathrm d x
\right)^{-1},\]
we have, for every $s \in \mathbb R$,
\[
\frac{\delta_n}{\Delta_n}
\sum_{k=0}^{n-1}
b_k^{(n)} \left(1 + 
\delta_n + \frac{\delta_n}{\Delta_n} s
\right)^{2k+1}
e^{-2 \kappa_n V\left(1 + 
\delta_n + \frac{\delta_n}{\Delta_n} s\right)}
\xrightarrow[n \to \infty]{}
\frac{e^{-2\xi} e^{-s}}{A}
\]
\end{proposition}

\begin{proof}
We begin by noticing that,
since $e^{\Delta_n/\alpha} \Delta_n = 
n^{1/\alpha}$, we have
$\Delta_n \to \infty$. Moreover,
by using that
$\frac{\Delta_n}{\alpha}
\sim \frac{\Delta_n}{\alpha} + \log \Delta_n = 
\frac{1}{\alpha}\log n$
we obtain 
\[\Delta_n \sim \log n.\]
Define
\[c_n =1 + \left(
\frac{\Delta_n}{
2\lambda_{\scaleto{+}{4.5pt}}
\kappa_n}
 \right)^{1/\alpha}
 \quad 
 \mbox{ and }
 \quad d_n = \frac{1}{\Delta_n}
 \left(
\frac{\Delta_n}{
2\lambda_{\scaleto{+}{4.5pt}}
\kappa_n}
 \right)^{1/\alpha}.\]
so that we want to find the limit of
\begin{equation}
\label{eq:RadialIntensityModified}
 d_n 
\sum_{k=0}^{n-1}
b_k^{(n)} (c_n + d_n s)^{2k+1}
e^{-2 \kappa_n V(c_n + d_n s)}.
\end{equation}
For simplicity of notation, we do not write
the subscripts and superscripts $n$
when there is no possibility of confusion.
For the potential term we have
\begin{align*}
V(c + d s)
&= \log (c + d s)
+
\frac{\lambda_{\scaleto{+}{3.5pt}}}{\alpha}
(c + d s - 1)^{\alpha}
+ o(c+d s - 1 )^{\alpha + \varepsilon}		\\
&=
\log (c + d s)
+
\frac{\lambda_{\scaleto{+}{3.5pt}}}{\alpha}
\left[
\frac{\Delta}{2 \lambda_{\scaleto{+}{3.5pt}}
\kappa}
\left(1 + \frac{s}{\Delta} \right)^{\alpha}
\right]
+ o\left(
\frac{\Delta}{
\kappa}
\right)^{1 + \frac{\varepsilon}{\alpha}}			\\
&=
\log (c + d s)
+
\frac{\Delta}{2 \alpha
\kappa}
\left(1 + \frac{\alpha s}{\Delta} 
+ o\left(\frac{1}{\Delta} \right)\right)
+ o\left(
\frac{\Delta}{
\kappa}
\right)^{1 + \frac{\varepsilon}{\alpha}}			\\
&=
\log (c + d s)
+
\frac{\Delta}{2 \alpha
\kappa}
 + \frac{s}{2\kappa} 
+ o\left(\frac{1}{\kappa}\right).
\end{align*}
So, we can write
\[\exp \left(-2\kappa
\left[ 
V\left(c + d s
\right) - \log\left(c + d s\right)
\right]\right)
=\exp \left(-\frac{\Delta}{\alpha} 
- s + o (1)\right)
\sim \frac{\Delta}{n^{1/\alpha}}
e^{- s}.\]
Then, the limit of 
\eqref{eq:RadialIntensityModified}
will be the same as the limit of
\begin{equation*}
e^{- s}
\left(
\left(\frac{\Delta}{
2\lambda_{\scaleto{+}{4.5pt}} \kappa}\right)^{1/\alpha}
\
\sum_{k=0}^{n-1}
\frac{b_k}{n^{1/\alpha}}
 (c + d s)^{2(k - 
 \lfloor \kappa \rfloor)}
 \right).
\end{equation*}
where $ \lfloor \kappa \rfloor$
denotes the integer part of $\kappa$. 
Then, by recalling that
$\delta =  \left(\frac{\Delta}{2\lambda_{\scaleto{+}{3.5pt}}
 \kappa}
  \right)^{1/\alpha}$,
it is enough to prove that $R_n$, defined by
\[ R_n
=\delta
\sum_{k=0}^{n-1}
\frac{b_k}{n^{1/\alpha}}
\left(1 + \frac{s}{\Delta}\delta + \delta
 \right)^{2(k-\lfloor\kappa\rfloor) }
=
 \delta
\sum_{k=-\lfloor\kappa\rfloor}^{
n-1-\lfloor\kappa\rfloor}
\frac{b_{\lfloor\kappa\rfloor + k}}{n^{1/\alpha}}
\left(1 + \frac{s}{\Delta}\delta + \delta
 \right)^{2k},\]
 converges to the constant
 $\frac{e^{-2\xi}}{A}$. If $t^*= 
\delta\lfloor t /\delta\rfloor $,
we may rewrite $R_n$ as
\[R_n= 
\bigints_{-\delta \lfloor \kappa
\rfloor}^{\delta
(n-\lfloor \kappa \rfloor)}
\left(
\frac{b_{\lfloor \kappa \rfloor
+ t^*/\delta}^{}}{n^{1/\alpha}}
\right)
 \left(1 + \frac{s}{\Delta}\delta + \delta
\right)^{2t^{*}/\delta }
\mathrm d t. \]
We look at each term in the integrand. We have

\[
\lim_{n \to \infty}
\left(1+\frac{s}{\Delta}\delta+\delta
\right)^{2t^*/\delta}
=e^{2t}
\]
and, for
\[\left(
\frac{b_{\lfloor \kappa \rfloor
+t^* /\delta}^{}}{n^{1/\alpha}}
\right)^{-1}
=
n^{1/\alpha}\int_0^\infty
x^{2(\lfloor \kappa \rfloor 
+ t^{*} /\delta) + 1} 
e^{-2\kappa V(x)} 
\mathrm d x,	\]
we can split the integral in two regions.
We can 
use
that $V(x) > \log x$ for $x \neq 1$,
 that $V$ is bounded from below
and the strong
confinement condition \eqref{eq:2StronglyConfiningInTheorem},
to see
that
\[\lim_{n \to \infty}
n^{1/\alpha}\int_{[0,1-\varepsilon]
 \cup [1+\varepsilon,\infty)}
x^{2(\lfloor \kappa \rfloor 
+ t^{*} /\delta) + 1} 
e^{-2\kappa V(x)} 
\mathrm d x = 0\]
for any $\varepsilon > 0$.
We have then

\begin{align*}
n^{1/\alpha}\int_{1-\varepsilon}^{1+\varepsilon}
x^{2(\lfloor \kappa \rfloor 
+ t^{*} /\delta) + 1} 
e^{-2\kappa V(x)} 
\mathrm d x							
&=
n^{1/\alpha}\int_{1-\varepsilon}^{1+\varepsilon}
x^{2t^{*} /\delta + 1
+2\left(\lfloor \kappa \rfloor
- \kappa\right)} 
e^{-2\kappa \left(V(x) - \log x \right)} 
\mathrm d x							\\
& \hspace{-17mm}=
\int_{-\varepsilon n^{1/\alpha}}^{\varepsilon 
n^{1/\alpha}}
\left(1 + \frac{x}{n^{1/\alpha}} \right)^{2t^{*}/\delta + 1
+2\left(\lfloor \kappa \rfloor
- \kappa\right)} 
e^{-2\kappa \left(V
\left(1 + \frac{x}{n^{1/\alpha}}\right) 
- \log \left(1 + \frac{x}{n^{1/\alpha}} \right) \right)} 
\mathrm d x							\\
& \hspace{-17mm} \xrightarrow[n \to \infty]{}  
 \int_{0}^\infty
e^{-
\frac{2\lambda_{\scaleto{-}{3.5pt}}}{\alpha} x^\alpha} 
\mathrm d x	+
 \int_{0}^\infty
e^{-\frac{2\lambda_{\scaleto{+}{3.5pt}}}{\alpha} 
x^\alpha} 
\mathrm d x						\\
& \hspace{-17mm} =
\Gamma\left(
\frac{1}{\alpha} \right)
\alpha^{\frac{1}{\alpha}
- 1}
\left(
(2\lambda_{\scaleto{-}{4.5pt}})^{-1/\alpha}
+
(2\lambda_{\scaleto{+}{4.5pt}})^{-1/\alpha}
 \right)=\frac{A}{2}
\end{align*}
where we have bounded
the integrand in
$[-\varepsilon n^{1/\alpha},
\varepsilon n^{1/\alpha} ]$ by bounding
$V(1+x) - \log (1+x) $
from below by a multiple of $|x|^\alpha$
for
$x \in [-\varepsilon,\varepsilon]$ and 
applied Lebesgue's dominated
convergence theorem. 
%Finally,
%by Lebesgue's dominated convergence theorem
%(the above calculations
%suggest the bounds to use),
%\begin{align*}\left(
%\frac{a_{\lfloor \kappa \rfloor
%+ t^*/\delta}^{}}{n^{1/\alpha}}
%\right)&
% \left(1 + \frac{s}{\Delta} \delta + \delta
%\right)^{2t^{*}/\delta}
%1_{\left[-\delta \lfloor \kappa
%\rfloor,\delta
%(n-\lfloor \kappa \rfloor)\right)}	(t)			\\
%&\leq C
%\left(
%\int_{-\varepsilon n^{1/\alpha}}^{\varepsilon 
%n^{1/\alpha}}E_C\left(\frac{t x}{\delta n^{1/\alpha}}
%\right) e^{-C|x|^\alpha} \mathrm d x
%\right)^{-1} E_{C^{-1}}
%\left(\left(\frac{s}{\Delta} + 1 \right) t\right)
%1_{\left[-\delta \lfloor \kappa
%\rfloor,\delta
%(n-\lfloor \kappa \rfloor)\right)} (t)
%\end{align*}
We may notice that
 there exists $C>0$ and $\varepsilon>0$
such that by defining
\[E_C(x) = \left\{ 
\begin{array}{ll}
\exp(Cx)& \mbox{ if } x \leq 0 \\
\exp(C^{-1}x)& \mbox{ if } x > 0 
\end{array}\right. \]
we have, for $n$ large enough,
\begin{align}\left(
\frac{b_{\lfloor \kappa \rfloor
+ t^*/\delta}^{}}{n^{1/\alpha}}
\right)&
 \left(1 + \frac{s}{\Delta} \delta + \delta
\right)^{2t^{*}/\delta}
1_{\left[-\delta \lfloor \kappa
\rfloor,\delta
(n-\lfloor \kappa \rfloor)\right)}	(t)		\nonumber
	\\
\leq  C &
\left(
\int_{-\varepsilon n^{1/\alpha}}^{0}
E_C\left(\frac{t x}{\delta n^{1/\alpha}}
\right) e^{-C|x|^\alpha} \mathrm d x
\right)^{-1} E_{C^{-1}}
\left(\left(\frac{s}{\Delta} + 1 \right) t\right)
1_{\left[-\delta \lfloor \kappa
\rfloor,\delta
(n-\lfloor \kappa \rfloor)\right)} (t)	\nonumber	
										\\
\leq  C &
\left(
\int_{-\varepsilon n^{1/\alpha}}^{0}
E_C\left(\frac{(n-\lfloor \kappa \rfloor) x}{ n^{1/\alpha}}
\right) e^{-C|x|^\alpha} \mathrm d x
\right)^{-1} E_{C^{-1}}
\left(\left(\frac{s}{\Delta} + 1 \right) t\right)
1_{\left[-\delta \lfloor \kappa
\rfloor,\delta
(n-\lfloor \kappa \rfloor)\right)} (t)	 \nonumber		
										\\
\leq  \widetilde C &
E_{C^{-1}}
\left(\left(\frac{s}{\Delta} + 1 \right) t\right)
1_{\left[-\delta \lfloor \kappa
\rfloor,\delta
(n-\lfloor \kappa \rfloor)\right)} (t)	
\label{eq:IneqIntegrandStrongly},
\end{align}
where we have used
that $(n-\lfloor \kappa \rfloor)/n^{1/\alpha}
\to 0$ to find the bound $\widetilde C$.
Since $s$ is fixed, we may bound the previous
expression to  apply Lebesgue's dominated convergence
theorem and conclude that
\begin{align*}
\bigints_{-\delta \lfloor \kappa
\rfloor}^{\delta
(n-\lfloor \kappa \rfloor)}
\left(
\frac{b_{\lfloor \kappa \rfloor
+ t^*/\delta}^{}}{n^{1/\alpha}}
\right)
 \left(1 + \frac{s}{\Delta} \delta + \delta
\right)^{2t^{*}/\delta}
\mathrm d t
	\xrightarrow[n \to \infty]{}  &
\frac{2}{A}
\int_{-\infty}^{-\xi}
e^{2t}
\mathrm d t			\\
	&=\frac{
e^{-2\xi}}{A}
\end{align*}
 where
 \[\xi = \lim_{n \to \infty}
 \delta(\lfloor \kappa\rfloor - n)
 =
   \lim_{n \to \infty}
\left(
\frac{\Delta}{2
\lambda_{\scaleto{+}{4.5pt}} \kappa}
\right)^{1/\alpha}
 (\kappa-n)
 =
    \lim_{n \to \infty}
\left(\frac{\log n}{2\lambda_{\scaleto{+}{4.5pt}} n} 
\right)^{1/\alpha}
 (\kappa-n).\]
\end{proof}

\begin{proposition}[Intensity convergence
for a `logarithm' potential]
\label{prop:IntensityWeakly}
Under the conditions and notation of
Theorem \ref{th:WeaklyKappa}, 
if we define
\[b_k^{(n)} = \left(\int_0^\infty 
x^{2k+1} e^{-2\kappa_n V(x)} \mathrm d x
\right)^{-1},\]
we have, for every $s \in \mathbb R$,
\[
\frac{1}{2(\kappa_n - n)}
\sum_{k=0}^{n-1}
b_k^{(n)} \left(1 + \frac{\Delta_n}{2(\kappa_n - n)}
 + \frac{s}{2(\kappa_n - n)} 
 \right)^{2k+1}
e^{-2 \kappa_n V
\left(1 + \frac{\Delta_n}{2(\kappa_n - n)}
 + \frac{s}{2(\kappa_n - n)} \right)}
\xrightarrow[n \to \infty]{} 
\frac{e^{-s}}{1+A} .
\]
\end{proposition}

\begin{proof}
First we notice that, since $\kappa_n - n$
goes to infinity and since
$e^{\Delta_n}\Delta_n = \kappa_n - n$,
we obtain that $\Delta_n \to \infty$.
Then, since $\Delta_n \sim \Delta_n + 
\log \Delta_n = \log (\kappa_n - n)$, we get that
\[\Delta_n \sim \log(\kappa_n - n).\]
%
%of $\gamma_n$.
%
%\[\gamma_n \to 0 \quad 
%\mbox{ and } \quad
%\gamma_n (\kappa_n - n)
%\to \infty\]
%so that, in particular,
%since
%$(\kappa_n - n)/\sqrt n
%\to c$, we have
%\[\gamma_n \sqrt n \to \infty.\]
Let 
\[c_n = 1 +
\frac{\Delta_n}{2(\kappa_n - n)}
 \quad \mbox{ and }
 \quad
 d_n = \frac{1}{2(\kappa_n - n)}.\]
We want to find the limit
of 
\begin{equation}
\label{eq:2RadialIntensityModifiedWeak}
 d_n 
\sum_{k=0}^{n-1}
b_k^{(n)} (c_n + d_n s)^{2k+1}
e^{-2 \kappa_n V(c_n + d_n s)}.
\end{equation}
For the potential term we have
\begin{align*}
\exp \left(-2\kappa_n
V\left(c_n + d_n s
\right) +2n \log\left(c_n + d_n s\right)\right)	\\
&\hspace{-17mm}=
 \exp \left(2(n-\kappa_n)
 \log
\left(1+\frac{\Delta_n}{2(\kappa_n - n)} +
\frac{s}{2(\kappa_n - n)} \right)  \right)			\\
&\hspace{-17mm}
\sim e^{-\Delta_n} e^{-s} = \frac{\Delta_n}{\kappa_n - n}
e^{-s}.
\end{align*}
If we define
$\delta_n = \frac{\Delta_n}{2(\kappa_n - n)}$,
the limit of 
\eqref{eq:2RadialIntensityModifiedWeak}
will be the same as the limit of
\begin{equation*}
e^{-s}
\left(
\delta_n
\sum_{k=0}^{n-1}
\frac{b_k^{(n)}}{\kappa_n - n}
 \left(1+ \frac{s}{\Delta_n}{\delta_n}
 +\delta_n \right)^{2(k - n) }
 \right).
\end{equation*}
We want to prove that $R_n$, defined by
\[R_n=
\delta_n
\sum_{k=0}^{n-1}
\frac{b_k^{(n)}}{\kappa_n - n}
\left(1 +  \frac{s}{\Delta_n}{\delta_n} + \delta_n
 \right)^{2(k-n)}
=
\delta_n
\sum_{k=-n}^{-1}
\frac{b_{k+n}^{(n)}}{\kappa_n - n}
\left(1 +  \frac{s}{\Delta_n}{\delta_n}+ \delta_n
 \right)^{2k},
\]
converges
to the constant $\frac{1}{1+A}$.
Define $t^* = \delta_n \lfloor t/ \delta_n \rfloor$
so that
\[R_n 
 =
\bigints_{-n\delta_n}^{0}
\frac{b_{t^*/\delta_n + n}^{(n)}}
{\kappa_n - n}
\left(1 +  \frac{s}{\Delta_n}{\delta_n}+ \delta_n \right)^{2t^*/\delta_n}
\mathrm d t .\]
We look at each term in the integrand. We have
\[
\lim_{n \to \infty}
\left(1+\frac{s}{\Delta_n}\delta_n+\delta_n
\right)^{2t^*/\delta_n}
=e^{2t}
\]
and, for
\begin{align*}
\left(\frac{b_{t^*/\delta_n+n}^
{(n)}}{\kappa_n - n}\right)^{-1}
&=(\kappa_n - n)
\int_0^\infty x^{2t^*/\delta_n+2n+1}
e^{-2\kappa_n V(x)}		\mathrm d x
\end{align*}
we can split the integral in three regions.
The integral from $1$ to $\infty$
can be explicitly calculated and we obtain
\[(\kappa_n - n)
\int_1^\infty x^{2t^*/\delta_n+2n+1}
e^{-2\kappa_n V(x)}		\mathrm d x
\xrightarrow[n \to \infty]{}\frac{1}{2}.\]
For the integral from
$0$ to $1-\varepsilon$, for any $\varepsilon>0$,
we may use that
$V(x) > \log x$ for $x<1$ and that
$V$ is bounded from below to obtain
\[(\kappa_n - n)
\int_0^{1-\varepsilon} x^{2t^*/\delta_n+2n+1}
e^{-2\kappa_n V(x)}		\mathrm d x
\xrightarrow[n \to \infty]{} 0.\]
As in
the proof of Proposition \ref{prop:IntensityStrongly},
the integral from $1-\varepsilon$ to $1$
can be understood by 
controlling $V(1+x) - \log(1+x)$
from below
by a multiple of $|x|^\alpha$
and by using Lebesgue's dominated
convergence theorem. 
We obtain
\begin{align*}
(\kappa_n - n)&
\int_{1-\varepsilon}^1 x^{2t^*/\delta_n+2n+1}
e^{-2\kappa_n V(x)}		\mathrm d x
\\
&=
\frac{\kappa_n - n}{n^{1/\alpha}}
\int_{-\varepsilon n^{1/\alpha}}^0
\left(1 + \frac{x}{n^{1/\alpha}}
\right)^{
2\frac{t^*}{\delta_n}+1 +2(n-\kappa_n)}
e^{-2\kappa_n \left[V
\left(1 + \frac{x}{n^{1/\alpha}} \right)
-\log\left(1 + \frac{x}{n^{1/\alpha}} \right)
\right]} \mathrm d x				\\		
&\to
c \int_{-\infty}^0
e^{-\frac{2\lambda}{\alpha} |x|^\alpha} \mathrm d x		\\
&=
c 
\Gamma\left(
\frac{1}{\alpha} \right)
\alpha^{\frac{1}{\alpha}
- 1}
(2\lambda)^{-1/\alpha}=\frac{A}{2}.
\end{align*}
We can see that
 there exists $C>0$ such that, 
%by defining
%\[E_C(x) = \left\{ 
%\begin{array}{ll}
%\exp(Cx)& \mbox{ if } x \leq 0 \\
%\exp(C^{-1}x)& \mbox{ if } x > 0 
%\end{array}\right. \]
%we have 
for $n$ large enough,
%\begin{align}\left(
%\frac{a_{
%t^*/\delta_n + n}^{}}{\kappa_n - n}
%\right)&
% \left(1 + \frac{s}{\Delta_n} \delta_n + \delta_n
%\right)^{2t^{*}/\delta_n}
%1_{\left[-n \delta_n,0\right)}	(t)		\nonumber
%	\\
%&\leq   
%\frac{\kappa_n- n}{2(\kappa_n - n - 1)} E_{C^{-1}}
%\left(\left(\frac{s}{\Delta_n} + 1 \right) t\right)
%1_{\left[-n \delta_n,0\right)} (t)	\nonumber			
%										\\
%&\leq 
%E_{C^{-1}}
%\left(\left(\frac{s}{\Delta_n} + 1 \right) t\right)
%1_{\left[-n \delta_n,0\right)} (t)	
%\label{eq:IneqIntegrandWeakly}.
%\end{align}
\begin{align}\left(
\frac{b_{
t^*/\delta_n + n}^{}}{\kappa_n - n}
\right)&
 \left(1 + \frac{s}{\Delta_n} \delta_n + \delta_n
\right)^{2t^{*}/\delta_n}
1_{\left[-n \delta_n,0\right)}	(t)		\nonumber
	\\
&\leq   
\frac{\kappa_n- n}{2(\kappa_n - n - 1)} 
\exp
\left(C\left(\frac{s}{\Delta_n} + 1 \right) t\right)
1_{\left[-n \delta_n,0\right)} (t)	\nonumber			
										\\
&\leq 
\exp
\left(C\left(\frac{s}{\Delta_n} + 1 \right) t\right)
1_{\left[-n \delta_n,0\right)} (t)	
\label{eq:IneqIntegrandWeakly}.
\end{align}
By Lebesgue's dominated convergence theorem,
justified by
the bound \eqref{eq:IneqIntegrandWeakly},
we get
\[R_n = 
\bigints_{-n\delta_n}^{0}
\frac{b_{t^*/\delta_n + n}^{(n)}}
{\kappa_n - n}
\left(1 +  \frac{s}{\Delta_n}{\delta_n}
+\delta_n \right)^{2t^*/\delta_n}
\mathrm d t 
\xrightarrow[n \to \infty]{}
\frac{2}{1+A}
\int_{-\infty}^0 e^{2t} 
\mathrm d t
= \frac{1}{1+A},
\]
where we are also using that 
$n\delta_n\to \infty$, consequence
of the fact that
$\frac{\kappa_n - n}{n^{1/\alpha}}$
is bounded and that $\Delta_n \sim 
\log(\kappa_n - n)$.
\end{proof}

\subsection{Proofs of
Theorem \ref{th:Strongly} and
Theorem \ref{th:WeaklyKappa}}

\label{sub:ProofsOfGumbelBehavior}

\begin{proof}[Proof of Theorem
\ref{th:Strongly}]

Recall that
$\delta_n = 
 \left(
\frac{\Delta_n}{
2\lambda_{\scaleto{+}{4.5pt}}
\kappa_n}
 \right)^{1/\alpha} $.
If we define
 $c_n = 1+\delta_n$ and 
$d_n = \delta_n \Delta_n^{-1}$,
 we want to prove that, for every $t \in \mathbb R$,
\[\lim_{n \to \infty}
\mathbb P\left(
M_n
\leq c_n + d_n t
\right)
=
\exp\left(-A^{-1} e^{- t -2 \xi }\right) . \] 
Consider
the intensity
of the process of moduli $\bar \rho_n$ given by
\[\bar \rho_n(r) = 
r\sum_{k=0}^{n-1}
b_k^{(n)} r^{2k} e^{-2\kappa_n V(r)}
\quad \mbox{ where } \quad b_k^{(n)} 
=\left(\int_{0}^\infty
s^{2k+1}e^{-2\kappa_n V(s)}
\mathrm d s
\right)^{-1}\]
By Lemma
\ref{lemG:KernelOutside}
and Lema \ref{lemG:AlphaPotentialUniformBound}
we only need to prove that, for every $x \in \mathbb R$,
\[\lim_{n \to \infty} \int_{x}^\infty 
d_n \, 
\bar \rho_n\left (c_n + d_n s \right)
\mathrm d s =
\exp\left(-A^{-1} e^{- x -2 \xi }\right) \] 
as explained in
Subsection \ref{sub:IdeaOfProofGumbel}. 
By Proposition 
\ref{prop:IntensityStrongly}, we have that
\[d_n \, 
\bar \rho_n\left (c_n + d_n s \right)
\xrightarrow[n \to \infty]{}
\frac{e^{-2\xi} e^{-s}}{A}\]
for every $s \in \mathbb R$. Now,
we need to bound $d_n \, 
\bar \rho_n\left (c_n + d_n s \right)$
and apply Lebesgue's dominated convergence 
theorem. By
Lemma \ref{lemG:KernelOutside} and
Lemma \ref{lemG:AlphaKernelNear} we know that
\[\int_{1+\frac{1}{(\log n)^\gamma}}^\infty
\bar \rho_n(r) \mathrm d r
\xrightarrow[n \to \infty]{} 0\]
so that we only need to control
$d_n \, 
\bar \rho_n\left (c_n + d_n s \right)$
when 
$c_n + d_n s \leq 1+1/(\log n)^\gamma$.
Recall that in the proof of
Proposition \ref{prop:IntensityStrongly},
it was convenient to write
\[ 
d_n \, 
\bar \rho_n\left (c_n + d_n s \right)
=
d_n 
\sum_{k=0}^{n-1}
b_k^{(n)} (c_n + d_n s)^{2(k-\kappa_n 
)+1}
e^{-2 \kappa_n (V(c_n + d_n s) - \log(c_n + d_n s))}.\]
If we integrate
\eqref{eq:IneqIntegrandStrongly} in $t$
we may see that the term
$d_n 
\sum_{k=0}^{n-1}
b_k^{(n)} (c_n + d_n s)^{2(k-\kappa_n 
)+1}$ can be bounded for
$1-\widetilde \varepsilon <c_n + d_n s < 1 + 
\widetilde \varepsilon$
by a constant. Meanwhile,
by taking $\varepsilon>0$ from condition
\eqref{eq:2PotentialEpsilon} and choosing any
$\gamma>1/\varepsilon$,
we can find $C>0$ such that term
$e^{-2 \kappa_n (V(c_n + d_n s) - \log(c_n + d_n s))}$
is bounded 
by a multiple of
$e^{-Cs}$ 
in the region where $s \geq x$ and
$c_n + d_n s \leq 1+1/(\log n)^{\gamma}$.
This completes the study of the maxima.

The convergence of
the point process of moduli is
explained at the end of Subsection 
\ref{sub:IdeaOfProofGumbel}.
\end{proof}

\begin{proof}[Proof of Theorem \ref{th:WeaklyKappa}]
It follows
the same steps as the proof of
Theorem \ref{th:Strongly} where now we define
$c_n = 1 + \frac{\Delta_n}{2(\kappa_n - n)}$
and $d_n = \frac{1}{2(\kappa_n - n)}$. 
Lemma \ref{lemG:LogPotentialUniformBound}
justifies that we only need to prove
\[\lim_{n \to \infty} \int_{x}^\infty 
d_n \, 
\bar \rho_n\left (c_n + d_n s \right)
\mathrm d s =
\exp\left(-(1+A)^{-1} e^{- x }\right) \]
for every $x \in \mathbb R$. Lemma \ref{lemG:LogKernelNear} allows us to
focus on $c_n + d_n s 
\leq 1 + \frac{1}{\sqrt{\kappa_n -n}}$
 while \eqref{eq:IneqIntegrandWeakly} together
with  $V(r) = \log r$ for $r \geq 1$
allows us to bound
$d_n \, 
\bar \rho_n\left (c_n + d_n s \right)$
by a multiple of $e^{-s}$
in the region where $s \geq x$ and
$c_n + d_n s 
\leq 1 + \frac{1}{\sqrt{\kappa_n -n}}$.
The proof may be completed by
using Proposition \ref{prop:IntensityWeakly}.
\end{proof}

\section{Proofs of the exponential behavior}
\label{sec:ProofExponential}

We begin by the proof of 
Theorem \ref{th:HardEdge}, which is motivated
by \cite{Seo}.
It follows
almost the same steps as
the one of Theorem \ref{th:Strongly}
and Theorem \ref{th:WeaklyKappa} with the main difference
that we can 
now use the behavior of the kernel at the edge
stated in Proposition
\ref{prop:HardEdgeKernelLimit}.

\begin{proof}[Proof of Theorem \ref{th:HardEdge}]

For this case, the 
convergence
of the point process of moduli implies the
fluctuations of the maxima.
Nevertheless, for simplicity,
we shall follow the steps explained
in Subsection \ref{sub:IdeaOfProofGumbel}
and show first the fluctuations of the maxima.
Let us choose $m_n = 1-\frac{x}{n^2}$ and recall
that
\[
\bar \rho_n(r) = 
\sum_{k=0}^{n-1}
b_k^{(n)} r^{2k+1} e^{-2\kappa_n V(r)}
\quad \mbox{ where } \quad
b_k^{(n)} 
=\left(\int_{0}^1
s^{2k+1}e^{-2\kappa_n V(s)}
\mathrm d s
\right)^{-1}.\]
To see that
\eqref{eq:UniformConvCondition} holds
we notice that there exists $C<1$ such that
\[b_k^{(n)}
\leq
\left(\int_{1-\varepsilon}^1 
s^{2k+1} e^{-2\kappa_n C \log s} 
\mathrm d s \right)^{-1} \leq 
\left(\int_{1-\varepsilon}^1 
s^{2n-1} e^{-2\kappa_n C \log s} 
\mathrm d s \right)^{-1}\]
which is bounded by $\widetilde C n$ for some
$\widetilde C > 0$ independent of $n$ and $k$.
Then, we have
\[\sup_{k \in \{0,\dots,n-1\}}
b_k^{(n)}\int_{1-\frac{x}{n^2}}^1
r^{2k+1} e^{-2\kappa_n V(r)}\mathrm d r
\leq \widetilde C n 
\int_{1-\frac{x}{n^2}}^1
r e^{-2(1-Q) \log r}\mathrm d r 
\xrightarrow[n \to \infty]{}0.\]
Define
\[f_n(r)=\frac{1}{n^2}\sum_{k=0}^{n-1} b_k^{(n)}
\left(1+\frac{r}{n}\right)^{2k}e^{-2\kappa_n 
V\left(1+\frac{r}{n}\right)}
\quad \mbox{ and } \quad f(r)=
2\int_0^Q
(q-t) e^{2r(q-t)} \mathrm d t.\]
Notice that $2\kappa_n V\left(1+\frac{r}{n}\right)
\to 2r(1-q)$ uniformly on compact sets
of $(-\infty,0]$. Then,
we may use Proposition \ref{prop:HardEdgeKernelLimit}
and recall that
$2\pi a_k^{(n)} = b_k^{(n)}$
to obtain that
\[f_n(r) \xrightarrow[n \to \infty]{} f(r)\]
for $r$ uniformly on compact sets of 
$(-\infty,0]$.
By the steps
described in Subsection \ref{sub:IdeaOfProofGumbel},
we are interested in the limit of
\[
\frac{1}{n^2}\bar \rho_n\left(1+\frac{s}{n^2} \right) = \left(1+\frac{s}{n^2}
\right)f_n\left(\frac{s}{n}\right).\]
But since the convergence of $f_n$ towards
$f$ is uniform in compact sets, we have that
\[\frac{1}{n^2}\bar \rho_n\left(1+\frac{s}{n^2} \right) 
\xrightarrow[n \to \infty]{} 
f(0),\]
for $s$ uniformly on compact sets of $\mathbb R$.
Then
\[\int_{x}^0
\frac{1}{n^2}\bar \rho_n\left(1+\frac{s}{n^2} \right)
\mathrm d s 
\xrightarrow[n \to \infty]{} 
-x f(0) = -2 x  \int_0^Q(q-t) \mathrm d t
=x \left(q^2 - (q-Q)^2\right),\]
which is what we wanted to prove. 
For the point process convergence, we may
proceed as explained 
at the end of Subsection \ref{sub:IdeaOfProofGumbel}.
\end{proof}
Now, we proceed to the proof of
Proposition
\ref{prop:HardEdgeKernelLimit}.
\begin{proof}[Proof of Proposition
\ref{prop:HardEdgeKernelLimit}]
Define
\[
F_n(z) = \frac{1}{n^2}
\sum_{k=0}^{n-1}
a_k^{(n)}
\left(1+\frac{z}{n}\right)^k.\]
Then, we have to prove that
\[\lim_{n \to \infty}
F_n\left(z+\bar w + \frac{z \bar w}{n} \right)
=\int_0^Q
\frac{q-t}{\pi} e^{-t(z+\bar w)} \mathrm d t
\]
for $(z,w)$ uniformly on compact
sets of $\mathbb C \times \mathbb C$.
Since $|F_n(z)| \leq F_n(r)$
whenever $|z| \leq r$, by Montel's
theorem it is enough to show that
\[
\lim_{n \to \infty} F_n(r)
= 
\int_0^Q
\frac{q-t}{\pi}  e^{-r t} 
\mathrm d t
\]
for every $r \geq 0$.
Define
$t^*= \frac{\lfloor tn \rfloor}{n}$
%$t^*:[0,1] \to [0,1]$ by
%$t^*(t)= \frac{\lfloor tn \rfloor}{n}$
so that
\[F_n(r)
= \int_0^1 \frac{a^{(n)}_{n t^*}}{n}
\left(1+\frac{r}{n} \right)^{n t^* }
\mathrm d t.\]
%where an abuse of notation
%has been made by not writing
%the dependence
%of $t^*$ on $t$ and $n$.
We will study
the limiting behavior of
\[\left(
\frac{a^{(n)}_{n t^*}}{n} \right)^{-1}
= 
2\pi n \int_{0}^1
s^{2 n t^*+1} e^{-2\kappa_n V(s)} 
\mathrm d
s
=
n \int_{0}^\infty
e^{-2\kappa_n V(s)
+ 2 n t^* \log s} 
s \mathrm d
s \]
and, due to Laplace's method,
it will depend
on the points
where
$V(s) - t \log s$
is minimum. 
We know that the minimum is negative 
if $t < 1-Q$ so that
the sequence
converges exponentially to infinity.
On the other hand, 
if $t > 1 - Q$,
the minimum is zero and it
is attained at $s = 1$
so that
\begin{align*}
\left(
\frac{a^{(n)}_{n t^*}}{n} \right)^{-1}
&=
2\pi\int_{-n }^0
\left(1+\frac{s}{n} \right)^{2n t^* + 1}
e^{-2 \kappa_n V
\left(1+\frac{s}{n} \right)}
\mathrm d s								\\
&\xrightarrow[n \to \infty]{}	  			
2\pi\int_{-\infty}^0
e^{2ts} e^{-2(1-q)s} \mathrm d s			\\
&=\frac{\pi}{t - (1-q)}.
\end{align*}
By bounding
$V(s)$ from above by a
multiple of 
$\log s$ we can see that
 there is $C>0$ such that
\[\frac{a^{(n)}_{n t^*}}{n}
\left(1+\frac{r}{n} \right)^{n t^* }
1_{[0,1]}(t)
\leq C(t+C) e^{rt}1_{[0,1]}(t)\]
so that we may apply Lebesgue's dominated convergence
theorem and obtain
\[F_n(r) \xrightarrow[n \to \infty]{}
\int_{1-Q}^1
\frac{t - (1-q)}{\pi}
e^{r t} \mathrm d t
=\int_0^Q \frac{q-s}{\pi}
e^{r(1-s)} \mathrm d s.\]

\end{proof}

\section{Appendices}
\label{sec:Appendices}

\subsection{Random number of particles}

\label{subAp:RandomNumber}

For completeness, we
state here the
complementary version
of Theorem \ref{th:MaxFiniteParticles}.
Here the precise limiting behavior will
depend on some other parameters
which make the statement a little
more cumbersome.

\begin{theorem}[Finite limiting process: Random number of particles]
\label{th:MaxFiniteParticlesRandom}

Suppose that, for some
positive numbers
$\alpha, \gamma, Q_{\scaleto{+}{4.5pt}}, 
Q_{\scaleto{-}{4.5pt}}> \nolinebreak 0$,
\begin{itemize}
\item 
	\itemEqNoLabel{V(r) > \log r \,
	\mbox{ for every } 
	r > 1,}
\item
	\itemEqNoLabel{\lim_{r \to \infty}
 	r^\alpha \left( V(r) - \log r \right) = 
 	\gamma,}
\item 
	\itemEqNoLabel{\lim_{r \to 1^+} \frac{V(r)}{r-1}
	 =1+ Q_{\scaleto{+}{4.5pt}} \mbox{ and }}
\item
	\itemEqNoLabel{ \lim_{r \to 1^-} \frac{V(r)}{r-1}
	 =1- Q_{\scaleto{-}{4.5pt}}.}
\end{itemize}
Suppose that, for some 
$\chi \in (0,\alpha/2] $ 
such that
$\alpha/2-\chi \in \mathbb Z$ and
for some
$\xi \in \mathbb R$,
\[\kappa_n = n + \chi + \frac{\xi}{\log n}
+ o \left(\frac{1}{\log n} \right). \]
%\[\quad \mbox{ or,  equivalently, }
%\quad 
%q_n = \frac{1}{n}
%- \frac{\chi}{n^2}
%- \frac{\xi}{n\log n}
%+	o\left( \frac{1}{n \log n}\right).
%\]
Consider a sequence
$\{Y_k\}_{k \geq 0}$
of independent random variables
where $Y_k$ follows the law
\[
\frac{r^{-2(k + \chi)}
e^{-r^{-\alpha}}
1_{(0,\infty)}(r)}
{\alpha
 \Gamma\left(\frac{2(k+ \chi)}{\alpha}
\right)}
\frac{\mathrm d r}{r}\]
and define
\[p=
\frac{\alpha \gamma
e^{2\xi/\alpha}
(Q_{\scaleto{+}{4.5pt}}+
Q_{\scaleto{-}{4.5pt}})}{
Q_{\scaleto{+}{4.5pt}}
Q_{\scaleto{-}{4.5pt}}}\]
Consider a
random variable 
$N$ independent of
$\{Y_k\}_{k \geq 0}$
such that
\[\mathbb 
 P
\left(N = \frac{\alpha}{2} - \chi
\right)= 
\frac{p}{1+p}\quad
\mbox{and} 
\quad \mathbb 
 P
\left(N = \frac{\alpha}{2} - 
\chi + 1
\right)= 
\frac{1}{1+p}.\]
Then,
for every continuous function
$f: [0,\infty) \to \mathbb R$
whose support is contained in
$(0,\infty)$,
 \[
 \sum_{k=1}^n f(n^{-1/\alpha}
 |x_k^{(n)}|)
\xrightarrow[n \to \infty]{\mathrm{law}} 
 \sum_{k=0}^{N-1}
 f\left( (2\gamma)^{1/\alpha}Y_k
 \right).\]
In particular, 
$n^{-1/\alpha}\max \{
|x_1^{(n)}|
,\dots, |x_n^{(n)}|\}$
converges in law to
the maximum of 
$\{Y_k\}_{0\leq k \leq N-1}$.
More explicitly, for every $t>0$,
				\[
				\lim_{n \to \infty}\mathbb P
	\left(n^{-1/\alpha}
	\max\{|x_1^{(n)}|,\dots,|x_n^{(n)}|\}
	\leq t
	\right)\]
	\[
	=\left(
				\prod_{k=0}^
				{ \alpha/2-\chi
				- 1
				}
	\frac{\Gamma
	 \left( 
	 \frac{2(k+\chi)}{\alpha},
	 2\gamma t^{-\alpha} \right)
	}
	{\Gamma
	 \left( 
	 \frac{2(k+\chi)}{\alpha} 
	\right)} 
	\right)
	\left(
	e^{-2\gamma t^{-\alpha}}
	+
	(1-e^{-2\gamma t^{-\alpha}})
	\frac{p}{1+p}
	\right).\]
 
\end{theorem}	

\begin{proof}
As in the proof of 
Theorem \ref{th:MaxFiniteParticles}, 
this theorem is a consequence of
Proposition \ref{propC:MinFiniteOne}
together
with 
\cite[Theorem 4.5.3]{HoughKrishnapurPeresVirag}.
We could have also used
\cite[Theorem 4.7.1]{HoughKrishnapurPeresVirag}
which is specific for
radial systems.
\end{proof}

Notice that in the extreme case
$\alpha = 2\chi$ the limiting 
distribution
of the maxima is a convex combination
of a Fréchet distribution and
a Dirac measure at zero. The same techniques can be used
to generalize this theorem 
to a case where
$V(r)$ has different kind of singularities
at $r=1$. In those generalizations actual Fréchet
distributions can be obtained and the
point process that accompany
the farthest particle would go
to a point process with just one particle.

\subsection{Comparison between the kernel and Gumbel fluctuations}

\label{subAp:KernelGumbel}

It is interesting to compare 
the results from Theorem \ref{th:Strongly}
and Theorem \ref{th:WeaklyKappa} with
the limiting behavior of the point process at 
a point $z$ of the unit circle, let us say $z=1$.
Since, as explained in 
\eqref{eqDet:ContinuityKernelProcess}, the
convergence of the point process
is implied by the convergence
of the kernels, we shall state now
the convergence of these kernels (up to a conjugation
of norm one). 
The proofs
are given at the end of this subsection. Define
\[K_n(z,w) =
\sum_{k=0}^{n-1}
a_k^{(n)}
z^{k} \bar w^k
e^{-\kappa_n V\left(\left|z
 \right|\right)}
 e^{-\kappa_n V\left(\left|w
\right| \right)}
\quad \mbox{ where } \quad 
a_k^{(n)}
= \left(2\pi\int_0^\infty
s^{2k + 1} e^{-2\kappa_n V(s)} 
\mathrm d s\right)^{-1}
\]
and let
\[\widetilde K_n(z,w) = 
\left(\frac{\bar z}{|z|} \right)^{\lfloor \kappa_n \rfloor} K_n(z,w)
\left(\frac{w}{|w|} \right)^{\lfloor \kappa_n \rfloor}. \]
We will state the versions for $\alpha>1$
since the versions for $\alpha=1$ are 
somewhat longer to state.

\begin{proposition}[Limiting kernel for strongly confining potentials]
\label{prop:StronglyKernelLimit}
Suppose $V$ satisfies
the \emph{standard properties} and
\begin{itemize}
\item 
	\itemEq{V(r) = \log r
	+ \frac{\lambda}{\alpha}
	(r-1)^\alpha
	+o(r-1)^{\alpha}
	\mbox{ as } r \to 1,
						\label{eqK:VTwoSidedBehavior}}
\item \itemEq{V(r) > \log r
\, \mbox{ for every }
r > 1 \mbox{ and } \nonumber}
\item 
   \itemEq{ \nonumber
%   \label{eqK:KernelStronglyConfining}
	\liminf_{r \to \infty} 
	\frac{V(r)}{\log r} >1
	\quad \mbox{(strongly confining),}}
\end{itemize}
%\begin{equation}
%\label{eq:VTwoSidedBehavior}
%V(r) 
%= \log r + \frac{\lambda}{\alpha}
%|r-1|^{\alpha}
%+ o(r-1)^{\alpha} \mbox{ as } r\to 1
%\end{equation}
for some $\alpha > 1$
and $\lambda >0$. Suppose that
$\kappa_n/n \to 1$ and that
\[\lim_{n \to \infty}
\frac{n-\kappa_n }{n^{1/\alpha}}
=\zeta \in 
(-\infty,\infty].\]
Then, if $W: \mathbb C \to \mathbb R$
is defined by
$W(x+iy) = \frac{\lambda}{\alpha}
|x|^\alpha$, we have that
\[\lim_{n \to \infty} \frac{1}{n^{2/\alpha}}
\widetilde K_n\left(
1 + \frac{z}{n^{1/\alpha}},1 + 
\frac{w}{n^{1/\alpha}}\right) =
\frac{e^{-W(z)} e^{-W(w)}}{2\pi} \int_{-\infty}^{\zeta}
\frac{
e^{(z + \bar w) t} }
{ \int_{-\infty}^\infty
e^{2st}
e^{-2 W(s)} 
\mathrm d s	}
\mathrm d t\]
for $(z,w)$ uniformly on compact subsets of 
$\mathbb C \times \mathbb C$.

\end{proposition}

Notice that 
in condition \eqref{eqK:VTwoSidedBehavior}
there is no need of $\varepsilon$
as in
\eqref{eq:2PotentialEpsilon} from Theorem  
\ref{th:Strongly}.
When $\zeta = \infty$ we obtain the 
weighted Bergman kernel with weight
\[x + iy \mapsto e^{-2\frac{\lambda}{\alpha}
|x|^\alpha} \]
which is, for
$\alpha = 2$, 
the well-known Ginibre kernel up to a conjugation.
For $\zeta<\infty$, we obtain a truncated
version of a Bergman kernel. 
Moreover, for $\alpha = 2$ 
we get the well-known
error function kernel
if $\zeta = 0$ and a horizontally 
translated version of it
for general finite $\zeta$.
This error function kernel is known to be universal
under some analyticity conditions \cite{Aron}.

\begin{proposition}[Limiting kernel for weakly confining potentials]
\label{prop:WeaklyKernelLimit}
Suppose $V$ satisfies
the \emph{standard properties} and
\[
V(r) 
= \log r + \frac{\lambda}{\alpha}
(1-r)^{\alpha}
+ o(1-r)^{\alpha} \mbox{ as } r\to 1^{-}
\quad \mbox{ and }
\quad V(r) = \log r \, \mbox{ for } \, r \geq 1,\]
for some $\alpha > 1$
and $\lambda >0$. Suppose that
$\kappa_n>n$ satisfies
\[ \lim_{n \to \infty}
\frac{n-\kappa_n }{n^{1/\alpha}}
= \zeta \in 
(-\infty,0].\]
Then, if $W: \mathbb C \to \mathbb R$
is defined by
$W(x+iy) = \frac{\lambda}{\alpha}
|\min(x,0)|^\alpha$, we have that
%\begin{equation}
%\label{eq:LimitKernelFormulaWeakly}
\[\lim_{n \to \infty} \frac{1}{n^{2/\alpha}}
\widetilde K_n\left(
1 + \frac{z}{n^{1/\alpha}},1 + 
\frac{w}{n^{1/\alpha}}\right) =
\frac{e^{-W(z)}e^{- W(w)}}{2\pi} \int_{-\infty}^{\zeta}
\frac{
e^{(z + \bar w) t} }
{ \int_{-\infty}^\infty
e^{2st}
e^{-2 W(s)}
\mathrm d s	}
\mathrm d t\]
%\end{equation}
for $(z,w)$ uniformly on compact subsets of 
$\mathbb C \times \mathbb C$.

%\textcolor{red}{$\kappa_n - n$ bounded away
%from zero?}

\end{proposition}

The version for $\alpha=1$, not stated
here, does not only depend on the behavior of
the potential at the unit circle.
It can be understood by using the same methods,
and results similar to
Proposition \ref{prop:HardEdgeKernelLimit}
would be obtained.
We may see another example in
\cite[Theorem 4.3]{GarciaZelada2}.

In Proposition \ref{prop:WeaklyKernelLimit},
when $\zeta = 0$,
we obtain
the weighted Bergman kernel
with weight
\[x + iy \mapsto e^{-2\frac{\lambda}{\alpha}
|\min(x,\,0)|^\alpha} .\]
For $\zeta<0$, we obtain a truncated
version of that Bergman kernel. Furthermore,
defining $W$ as in Proposition \ref{prop:WeaklyKernelLimit}, we have that
for $(z,w)$ uniformly on compact sets
of $\{x+iy \in \mathbb C: x > 0\}^2$,
\[
\frac{e^{-W(z)}e^{- W(w)}}{2\pi} \int_{-\infty}^{\zeta}
\frac{
e^{(z + \bar w) t} }
{ \int_{-\infty}^\infty
e^{2st}
e^{-2 W(s)}
\mathrm d s	}
\mathrm d t
\xrightarrow[\lambda \to \infty]{}
\int_{-\infty}^\zeta 
\frac{t e^{(z+\bar w)t}}{\pi} \mathrm d t\]
which is the unweighted Bergman kernel
of $\{x+iy \in \mathbb C: x > 0\}$
when $\zeta = 0$. 
The $\alpha=2$ version
\[K(x+iy,a+ib)=
\frac{
e^{-\frac{\lambda}{2}\min(x,0)^2}
e^{-\frac{\lambda}{2}\min(a,0)^2}}{2\pi} 
\int_{-\infty}^{\zeta}
\frac{
e^{(x+ a + i(y -b)) t} }
{ \int_{-\infty}^0
e^{2st}
e^{-\lambda s^2}
\mathrm d s	- \frac{1}{2t}}
\mathrm d t\]
has recently appeared in
\cite{Ameur} (at least for
$\zeta = 0$) and it satisfies
the nice but expected property
%\[e^{i\lambda T(y-b)} K(x-T+iy,a-T+ib) 
%\xrightarrow[T \to \infty]{} 
%\frac{\lambda}{2\pi} e^{-\frac{\lambda}{4}\left( 
%(x-a)^2 + (y-b)^2\right)}
%e^{i\frac{\lambda}{2}(x+a)(y-b)}\]
\[e^{i\lambda (T - x/2)y} K(x-T+iy,a-T+ib)  e^{-i\lambda (T - a/2)b}
\xrightarrow[T \to \infty]{} 
\frac{\lambda}{2\pi} e^{-\frac{\lambda}{4}\left( 
(x-a)^2 + (y-b)^2\right)}
e^{i\frac{\lambda}{2}(y a - xb)},\]
where we may recognize 
the Ginibre kernel at the right-hand side. In this way,
the $\alpha=2$ case for $\zeta = 0$ interpolates
between
the determinantal point process that has the
 Bergman kernel as its kernel and the 
Ginibre point process.
Now, we state the promised relation between
the Gumbel fluctuations
and the limiting kernels.

\begin{remark}[Connection between Gumbel fluctuations
and the limiting kernel]
\label{rem:GumbelAndEdge}
Choose $\alpha>1$ and $\lambda>0$ and take
$W:\mathbb R \to \mathbb R$ defined by
 $W(r) = \frac{\lambda}{\alpha}|r|^\alpha$.
Let us define $f: \mathbb R \to \mathbb R$
by
\begin{equation}
\label{eq:fInConnection}
f(r)=e^{-2W(r)} \int_{-\infty}^{0}
\frac{
e^{2r t} }
{ \int_{-\infty}^\infty
e^{2st}
e^{-2 W(s)} 
\mathrm d s	}
\mathrm d t.
\end{equation}
Using a change of variables in the integral,
we may see that
\[f(r) \sim_{r \to \infty} 
\frac{e^{-\frac{2\lambda}{\alpha}r^\alpha}}{2 r
\int_{-\infty}^\infty e^{-2W(s)} \mathrm d s}.\]
Then, the sought connection, which may be seen
as a connection between 
Proposition \ref{prop:StronglyKernelLimit} and
Proposition \ref{prop:IntensityStrongly}, is
the convergence
\[\left(\frac{\Delta_n}{2\lambda n}\right)^{1/\alpha} \frac{n^{2/\alpha}}{\Delta_n}f\left(n^{1/\alpha}
\left(\frac{
\Delta_n}{2\lambda n}\right)^{1/\alpha} 
\left(1+\frac{s}{\Delta_n} \right)\right)
\xrightarrow[ n \to \infty ]{} \frac{e^{-s}}{2
\int_{-\infty}^\infty e^{-2W(s)}\mathrm d s},
 \]
 where we recognize 
 $A$ from Theorem \ref{th:Strongly}
 as the denominator. 
 Nevertheless,
 the 
 parameter $\xi$ in Theorem 
 \ref{th:Strongly} can not be guessed
 by only using the behavior at the unit circle.
On the other hand, if we define
$W(r) = \frac{\lambda}{\alpha} |\min(r,0)|^\alpha$ 
 and $f$ as in \eqref{eq:fInConnection}, we notice
 that
 \[f(r) \sim_{r \to \infty}
 \frac{1}{2r^2}.
 \]
In particular, the behavior at the unit circle
would not allow us to guess
Theorem \ref{th:WeaklyKappa}. 

\end{remark}

Now, we proceed to give the proofs
of Proposition \ref{prop:StronglyKernelLimit} and Proposition \ref{prop:WeaklyKernelLimit}.
The ideas are somewhat simpler
than the ones used
to prove Proposition \ref{prop:IntensityStrongly}
and Proposition \ref{prop:IntensityWeakly}.

%\subsection*{\textit{Proofs of Proposition \ref{prop:StronglyKernelLimit} and Proposition \ref{prop:WeaklyKernelLimit}}}
%
%\label{sub:ProofsOfKernelLimit}

\begin{proof}[Proof of Proposition 
\ref{prop:StronglyKernelLimit}]

Define
\[F_n(z)=\frac{1}{n^{2/\alpha}}
\sum_{k=0}^{n-1}
a_k^{(n)}
\left(1 + 
\frac{z}{n^{1/\alpha}} \right)^{k - 
\lfloor \kappa_n \rfloor},
\quad \theta_n(z)
= \left|1+\frac{z}{n^{1/\alpha}}
\right|^{\lfloor \kappa_n\rfloor - \kappa_n}
\]
and
\[W_n(z) = 
\kappa_n \left(V\left(
\left|1+ \frac{z}{n^{1/\alpha}}
\right| \right) - 
\log \left|1+ \frac{z}{n^{1/\alpha}}
\right|\right) \]
so that we have
\[\frac{1}{n^{2/\alpha}}
\widetilde K_n\left(
1 + \frac{z}{n^{1/\alpha}},1 + 
\frac{w}{n^{1/\alpha}}\right) 
= F_n\left(z + \bar w + 
\frac{z \bar w}{n^{1/\alpha}} \right)
e^{-W_n(z)}
 e^{-W_n(w)}
 \theta_n(z)  \theta_n(w).\]
Then, it is enough to show that
$F_n$, $W_n $ and $\theta_n$ converge uniformly
on compact sets of $\mathbb C$.
Since $\lfloor \kappa_n \rfloor - \kappa_n$
is bounded we have that, for $z$ uniformly on
compact sets of $\mathbb C$,
\[\theta_n(z) \xrightarrow[n \to \infty]{} 1.\]
By \eqref{eqK:VTwoSidedBehavior} we have
that, for $x+iy$ uniformly on compact sets of
$\mathbb C$,
\[W_n(x+iy) \xrightarrow[n \to \infty]{} 
\frac{\lambda}{\alpha}
|x|^\alpha.\]
We only need to understand $F_n$.
But  for any $R>0$,
$\sup_{|z|\leq R}|F_n(z)| \leq F_n(R) + F_n(-R)$
for $n$ large enough. So, if
$F_n|_{\mathbb R}$ converges pointwise, by Montel's
theorem $F_n$ converges uniformly
on compact sets of $\mathbb C$. Now, take
$r \in \mathbb R$,
define
$t^{*} = 
\frac{\lfloor t n^{1/\alpha} \rfloor}
{n^{1/\alpha}} $
and perform the change of variables
$t^* n^{1/\alpha} = k - \lfloor \kappa_n \rfloor$
so that
%\begin{equation}
%\label{eq:IntegralForm}
\[F_n(r) = 
\bigints_{-\frac{\lfloor \kappa_n \rfloor}
{n^{1/\alpha}}}^{
\frac{n - \lfloor \kappa_n \rfloor}{n^{1/\alpha}}}
\frac{a_{\lfloor \kappa_n \rfloor
+t^* n^{1/\alpha}}^{(n)}}{n^{1/\alpha}}
\left(1 + \frac{r}{n^{1/\alpha}} 
\right)^{t^{*} n^{1/\alpha}}
\mathrm d t .\]
%\end{equation}
We have
\[
\lim_{n \to \infty}
\left(1 + \frac{r}{n^{1/\alpha}} 
\right)^{t^{*} n^{1/\alpha} }
= e^{r t}.
\]
For the term 
\begin{align}
\left(
\frac{a_{\lfloor \kappa_n \rfloor +
t^* n^{1/\alpha}}^{(n)}}{n^{1/\alpha}}
\right)^{-1}
&=
2\pi n^{1/\alpha}\int_0^\infty
s^{2(\lfloor \kappa_n \rfloor + 
t^{*}n^{1/\alpha}) + 1} e^{-2\kappa_n V(s)} 
\mathrm d s					\nonumber		\\
&=
2\pi n^{1/\alpha}\int_0^\infty
s^{2t^{*}n^{1/\alpha} + 2(\lfloor \kappa_n \rfloor
- \kappa_n) + 1} 
e^{-2\kappa_n \left(V(s) - \log s \right)} 
\mathrm d s					\nonumber		\\
&=
2\pi \int_{-n^{1/\alpha}}^\infty
\left(1 + \frac{s}{n^{1/\alpha}} \right)^{2t^{*}
 n^{1/\alpha} + 2 (\lfloor \kappa_n \rfloor
- \kappa_n) + 1} 
e^{-2\kappa_n \left(V
\left(1 + \frac{s}{n^{1/\alpha}}\right) 
- \log \left(1 + \frac{s}{n^{1/\alpha}} \right) \right)} 
\mathrm d s		\label{eq:ImportantIntegralForA}				
\end{align}
we can notice that it is enough to consider
the integral from 
$-\varepsilon n^{1/\alpha}$ to 
$\varepsilon n^{1/\alpha}$. Indeed,
since $V(s) > \log s$ if $s > 1$ 
and since
the potential is strongly confining,
we may see that
\[2\pi n^{1/\alpha}\int_{1+\varepsilon}^\infty
s^{2(\lfloor \kappa_n \rfloor + 
t^{*}n^{1/\alpha}) + 1} e^{-2\kappa_n V(s)} 
\mathrm d s		\xrightarrow[n \to \infty]{} 0\]
for every $\varepsilon>0$. On the other hand,
since $V(s) > \log s$ if $s<1$ and
since the potential is bounded near the origin,
we have that
 \[2\pi n^{1/\alpha}\int_{0}^{1-\varepsilon}
s^{2(\lfloor \kappa_n \rfloor + 
t^{*}n^{1/\alpha}) + 1} e^{-2\kappa_n V(s)} 
\mathrm d s		\xrightarrow[n \to \infty]{} 0\]
for every $\varepsilon>0$.
Then, for $\varepsilon>0$ small enough we may use
\eqref{eqK:VTwoSidedBehavior} to
control the integrand in
\eqref{eq:ImportantIntegralForA}
from $-\varepsilon n^{1/\alpha}$
to $\varepsilon n^{1/\alpha}$
and apply Lebesgue's dominated
convergence theorem to obtain
\[\left(
\frac{a_{\lfloor \kappa_n \rfloor +
t^* n^{1/\alpha}}^{(n)}}{n^{1/\alpha}}
\right)^{-1} \xrightarrow[n \to \infty]{}
2\pi \int_{-\infty}^\infty
e^{2st}
e^{-\frac{2\lambda}{\alpha} |s|^\alpha} 
\mathrm d s				.	\]
%Since there exists $C>0$ and $\varepsilon>0$
%such that
%\[
%\frac{a_{\lfloor \kappa_n \rfloor
%+t^* n^{1/\alpha}}^{(n)}}{n^{1/\alpha}}
%\leq C\left(\int_{-n^{1/\alpha}\varepsilon}^{n^{1/\alpha}\varepsilon}
%\left(1 + \frac{s}{n^{1/\alpha}}
%\right)^{2t^*n^{1/\alpha}} e^{-C|s|^\alpha}
%\mathrm d s
%\right)^{-1} \]
Since there exists $C>0$ and $\varepsilon>0$
such that if we define
\[E_C(x) = \left\{ 
\begin{array}{ll}
\exp(Cx)& \mbox{ if } x \leq 0 \\
\exp(C^{-1}x)& \mbox{ if } x > 0 
\end{array}\right. \]
we have
\[
\frac{a_{\lfloor \kappa_n \rfloor
+t^* n^{1/\alpha}}^{(n)}}{n^{1/\alpha}}
\left(1 + \frac{r}{n^{1/\alpha}} 
\right)^{2t^{*} n^{1/\alpha}}1_{
\left[-\frac{\lfloor \kappa_n \rfloor}
{n^{1/\alpha}},
\frac{n - \lfloor \kappa_n \rfloor}{n^{1/\alpha}}\right)}(s)
\leq C\left(\int_{-n^{1/\alpha}\varepsilon}^{n^{1/\alpha}\varepsilon}
E_C(st) e^{-C|s|^\alpha}
\mathrm d s
\right)^{-1} E_{C^{-1}}(rt) ,\]
where the right-hand side is integrable in $t$
for $n$ large enough,
we can apply Lebesgue's dominated convergence
theorem to conclude
\begin{align*}
\lim_{n \to \infty}
F_n(r) 
&=\frac{1}{2\pi} \int_{-\infty}^\eta
\frac{
e^{r t} }
{ \int_{-\infty}^\infty
e^{2st}
e^{-\frac{2\lambda}{\alpha} |s|^\alpha} 
\mathrm d s	}
\mathrm d t.
\end{align*}

\end{proof}

\begin{proof}[Proof of Proposition \ref{prop:WeaklyKernelLimit}]

As in the proof of Proposition
\ref{prop:StronglyKernelLimit},
we only need to understand 
\[F_n(z)=\frac{1}{n^{2/\alpha}}
\sum_{k=0}^{n-1}
a_k^{(n)}
\left(1 + 
\frac{z}{n^{1/\alpha}} \right)^{k - 
\lfloor \kappa_n \rfloor}.
\]
As before, we only need to study
the pointwise convergence of $F_n|_{\mathbb R}$.
Take
$r \in \mathbb R$ and
define
$t^{*} = 
\frac{\lfloor t n^{1/\alpha} \rfloor}
{n^{1/\alpha}} $
so that
\[
F_n(r) = 
\bigints_{-\frac{\lfloor \kappa_n \rfloor}
{n^{1/\alpha}}}^{
\frac{n - \lfloor \kappa_n \rfloor}{n^{1/\alpha}}}
\frac{a_{\lfloor \kappa_n \rfloor
+t^* n^{1/\alpha}}^{(n)}}{n^{1/\alpha}}
\left(1 + \frac{r}{n^{1/\alpha}} 
\right)^{t^{*} n^{1/\alpha}}
\mathrm d t .
\]
We have
\[
\lim_{n \to \infty}
\left(1 + \frac{r}{n^{1/\alpha}} 
\right)^{t^{*} n^{1/\alpha} }
= e^{r t}.
\]
The term 
\[
\left(
\frac{a_{\lfloor \kappa_n \rfloor +
t^* n^{1/\alpha}}^{(n)}}{n^{1/\alpha}}
\right)^{-1}
=
2\pi n^{1/\alpha}\int_0^\infty
s^{2(\lfloor \kappa_n \rfloor + 
t^{*}n^{1/\alpha}) + 1} e^{-2\kappa_n V(s)} 
\mathrm d s											
\]
can be split in two parts. The integral
from $1$ to $\infty$ can be explicitly
calculated and we obtain
\begin{equation}
\label{eq:From0To1Weakly}
2\pi n^{1/\alpha}\int_1^\infty
s^{2(\lfloor \kappa_n \rfloor + 
t^{*}n^{1/\alpha}) + 1} e^{-2\kappa_n V(s)} 
\mathrm d s				
\xrightarrow[n \to \infty]{}
-\frac{\pi}{t}
= 2\pi \int_{0}^\infty e^{2st}\mathrm d s
\end{equation}
For the integral from $0$ to $1$ there is no
difference from
the proof of Proposition \ref{prop:StronglyKernelLimit} and we get
\[2\pi n^{1/\alpha}\int_0^1
s^{2(\lfloor \kappa_n \rfloor + 
t^{*}n^{1/\alpha}) + 1} e^{-2\kappa_n V(s)} 
\mathrm d s			
\xrightarrow[n \to \infty]{}
2\pi \int_{-\infty}^0
e^{2st}
e^{-\frac{2\lambda}{\alpha} |s|^\alpha} 
\mathrm d s.
\]
In summary, recalling that
$W(r) = \frac{\lambda}{\alpha}
|\min(r,0)|^\alpha$, we have obtained
\[\left(
\frac{a_{\lfloor \kappa_n \rfloor +
t^* n^{1/\alpha}}^{(n)}}{n^{1/\alpha}}
\right)^{-1} \xrightarrow[n \to \infty]{}
2\pi \int_{-\infty}^\infty
e^{2st}
e^{-2W(s)} 
\mathrm d s				.	\]
Since this $F_n$ is dominated by
the $F_n$ from 
the proof of Proposition 
\ref{prop:StronglyKernelLimit},
we may apply Lebesgue's dominated convergence
theorem to obtain
\[
\lim_{n \to \infty}
F_n(r) 
=\frac{1}{2\pi} \int_{-\infty}^\eta
\frac{
e^{r t} }
{ \int_{-\infty}^\infty
e^{2st}
e^{-2W(s)} 
\mathrm d s	}
\mathrm d t.\]
We could have also used the inverse of the
left-hand side
of
\eqref{eq:From0To1Weakly} to bound the integrand
in $F_n(r)$ since it can be explicitly calculated.
\end{proof}

\subsection{Exponential fluctuations for different behaviors}

\label{subAp:Hard-edgeAlphaCase}

Recall that
\[
\bar \rho_n(r) = 
\sum_{k=0}^{n-1}
b_k^{(n)} r^{2k+1} e^{-2\kappa_n V(r)}
\quad \mbox{ where } \quad
b_k^{(n)} 
=\left(\int_{0}^1
s^{2k+1}e^{-2\kappa_n V(s)}
\mathrm d s
\right)^{-1}.\]
The $\alpha=2$ case of the following
theorem is treated in
\cite{Seo} with additional conditions on $V$.

In the following theorem, $V$ will be a continuous
function on $[0,1]$.

\begin{theorem}

\label{th:HardEdgeAlpha}

Suppose $V$ satisfies the \emph{standard properties} and
\[
V(r) 
= \log r + \frac{\lambda}{\alpha}
(1-r)^{\alpha}
+ o(1-r)^{\alpha} \mbox{ as } r\to 1^{-},\]
for some $\alpha > 1$
and $\lambda >0$.
Suppose that
$\kappa_n$ satisfies
\[ \lim_{n \to \infty}
\frac{n-\kappa_n }{n^{1/\alpha}}
= \zeta \in 
(-\infty,\infty].\]
Define $F:(-\infty,0] \to \mathbb R$ by
\[F(r) = e^{-2\frac{\lambda}{\alpha} |r|^\alpha}
\int_{-\infty}^{\zeta}
\frac{e^{2tr}}
{\int_{-\infty}^0 e^{2s t} e^{-2\frac{\lambda}{\alpha}
|s|^\alpha} \mathrm d s}\mathrm d t.\]
Then, for $r$ uniformly on compact
sets of $(-\infty,0]$,
\[
\frac{1}{n^{2/\alpha}}
\rho_n\left(1+\frac{r}{n^{1/\alpha}}\right)
\xrightarrow[n \to \infty]{}
F(r).
\]
Furthermore,
as point processes on $[0,\infty)$,
\[
\left\{
n^{2/\alpha}
(1 - |x_i^{(n)}|) : 1 \leq i \leq n
\right\}
\xrightarrow[n \to \infty]{\mathrm{law}}
\mathcal P,
\]
where $\mathcal P$ is a Poisson point process
on $[0,\infty)$ with intensity $F(0)$.
In particular, for every $t \geq 0$,
\[\mathbb P\left(n^{2/\alpha} \left(1-
M_n
\right) \leq t\right) \to 
1 - 
e^{-F(0)t}.\]

\end{theorem}

\begin{proof}

Taking $t^* = \frac{\lfloor t n^{1/\alpha} \rfloor}
{n^{1/\alpha}}$, we may write
\begin{align*}
&\frac{1}{n^{2/\alpha}}
\rho_n\left(1+\frac{r}{n^{1/\alpha}}\right)
=
\frac{1}{n^{2/\alpha}}\sum_{k=0}^{n-1}
b_k^{(n)}
\left(1+\frac{r}{n^{1/\alpha}}\right)^{2(k-
\lfloor \kappa_n \rfloor)+1}
e^{-2\kappa_n\left(V
\left(1+\frac{r}{n^{1/\alpha}} \right)
-\log\left(1+\frac{r}{n^{1/\alpha}}\right) \right)}	\\
&\hspace{25mm}=
e^{-2\kappa_n\left(V
\left(1+\frac{r}{n^{1/\alpha}} \right)
-\log\left(1+\frac{r}{n^{1/\alpha}}\right) \right)}
\bigint_{-\frac{\lfloor \kappa_n \rfloor}{n^{1/\alpha}}}^{\frac{n-\lfloor \kappa_n \rfloor}{n^{1/\alpha}}}
\hspace{-4mm} 
\frac{b_{\lfloor \kappa_n \rfloor + t^* n^{1/\alpha}}}
{n^{1/\alpha}}
\left(1+\frac{r}{n^{1/\alpha}}\right)^{2t^*n^{1/\alpha}
+1}
\mathrm d t .
\end{align*}
By noticing
that
\begin{align*}
\left(\frac{b_{\lfloor \kappa_n \rfloor + t^* n^{1/\alpha}}}
{n^{1/\alpha}}\right)^{-1}
&= \int_{-n^{1/\alpha}}^0
\left(1+\frac{s}{n^{1/\alpha}}
\right)^{2(\lfloor \kappa_n \rfloor - \kappa_n) +
t^* n^{1/\alpha} + 1} e^{-2\kappa_n
\left(V\left(1 + \frac{s}{n^{1/\alpha}}\right)
-\log\left(1 + \frac{s}{n^{1/\alpha}}\right)\right)}
\mathrm d s											\\
&\xrightarrow[n \to \infty]{}
\int_{-\infty}^0 e^{2s t} e^{-2\frac{\lambda}{\alpha}
|s|^\alpha} \mathrm d s
\end{align*}
we obtain 
\[
\frac{1}{n^{2/\alpha}}
\rho_n\left(1+\frac{r}{n^{1/\alpha}}\right)
\xrightarrow[n \to \infty]{}
e^{-2\frac{\lambda}{\alpha} |r|^\alpha}
\int_{-\infty}^{\zeta}
\frac{e^{2tr}}
{\int_{-\infty}^0 e^{2s t} e^{-2\frac{\lambda}{\alpha}
|s|^\alpha} \mathrm d s}\mathrm d t = F(r)
\]
for $r$ uniformly on compact
sets of $(-\infty,0]$. 
The justifications to apply
Lebesgue's dominated converges
theorem in the previous two integrals
are as in the proof of 
Proposition \ref{prop:StronglyKernelLimit}.
Finally, 
\[\frac{1}{n^{2/\alpha}}
\rho_n\left(1+\frac{r}{n^{2/\alpha}}\right)
\xrightarrow[n \to \infty]{} F(0)\]
also for $r$ uniformly on compact
sets of $(-\infty,0]$ so that
we obtain the result by using the ideas described
in Subsection \ref{sub:IdeaOfProofGumbel}
as in the proof of Theorem \ref{th:HardEdge}.

\end{proof}

\subsection{Gumbel distribution and weakly confining fluctuations}

\label{subAp:GumbelConnectionWeakly}

Here we establish a connection
between the limit 
of the maxima in 
Theorem \ref{th:MaxAnnulus} for $\widetilde R 
= \infty$
and the Gumbel distribution.

\begin{proposition}[Weakly confining
and Gumbel distribution]
\label{prop:GumbelWeakly}

For each $\chi > 0$ let $X_\chi$
be a random variable with
cumulative distribution function
	$$\mathbb P(X_\chi \leq t )
	=
		\prod_{k=0}^\infty 
	\left(1 - t^{-2k-2\chi} \right)$$
for every $t \geq 1$ and let
$\varepsilon_\chi > 0$ denote
the unique
solution to
	$$e^{\chi \varepsilon_{\chi}} 
	\varepsilon_{\chi}=1.$$
Then, as $\chi \to \infty$, we have
	$$ 
	2 \chi (X_\chi - 1 - \varepsilon_{\chi}/2)
	\to G$$	
where $G$ has a standard Gumbel distribution,
i.e.,
	$$\mathbb P(G \leq a)
	= e^{-e^{-a}}$$
for every $a \in \mathbb R$.

\begin{proof}

Define $b_{\chi}$ by
$$2b_{\chi} = \varepsilon_{\chi}+
a/\chi.$$
We have to prove that
	$$ \lim_{\chi \to \infty}
			\prod_{k=0}^\infty 
	\left(1 - (1+b_{\chi})^{-2k-2\chi} \right)
	=
	e^{-e^{-a}}.$$
We notice that,
as $\chi \to \infty$,
	$$\log\prod_{k=0}^\infty 
	\left(1 - (1+b_{\chi})^{-2k-2\chi} \right)
	\sim
	-\sum_{k=0}^\infty
	(1+b_{\chi})^{-2k-2\chi} 
	$$
which is due to
the fact that
$(1+b_\chi)^\chi \to \infty$ and
$\log(1-x) \sim -x + o(x)$.
Then
	$$\sum_{k=0}^\infty
	(1+b_{\chi})^{-2k-2\chi}
	=\frac{(1+b_\chi)^{-2\chi}}
	{1-(1+b_\chi)^{-2}}
	\sim 
	\frac{e^{-2\chi b_\chi + \chi O(b_\chi)^2}}
	{2b_{\chi}}
	\sim
	\frac{e^{-\chi \varepsilon_\chi}}
	{\varepsilon_{\chi}}
	e^{-a}=e^{-a}
	   $$
which concludes the proof.

\end{proof}
\end{proposition}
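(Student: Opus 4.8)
The plan is to reduce the distributional statement to an asymptotic analysis of the cumulative distribution function, exactly in the spirit of the proof of Theorem \ref{theo:Strongly}. Fix $a \in \mathbb R$ and set $b_\chi = \varepsilon_\chi/2 + a/(2\chi)$, so that $\{2\chi(X_\chi - 1 - \varepsilon_\chi/2) \le a\} = \{X_\chi \le 1 + b_\chi\}$ has probability $\prod_{k=0}^\infty (1-(1+b_\chi)^{-2k-2\chi})$; the goal is to show this tends to $e^{-e^{-a}}$. I would pass to logarithms and, after checking that $(1+b_\chi)^\chi \to \infty$, observe that the largest factor $(1+b_\chi)^{-2\chi}$ tends to $0$, so that $\log(1-x) = -x + O(x^2)$ together with $\sum_k x_k^2 \le (\sup_k x_k)\sum_k x_k$ gives
\[\log \prod_{k=0}^\infty \bigl(1-(1+b_\chi)^{-2k-2\chi}\bigr) \sim -\sum_{k=0}^\infty (1+b_\chi)^{-2k-2\chi}\]
as $\chi \to \infty$.

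Next I would record the elementary properties of $\varepsilon_\chi$ that are needed, all consequences of the identity $\chi\varepsilon_\chi = -\log\varepsilon_\chi$ obtained by taking the logarithm of $e^{\chi\varepsilon_\chi}\varepsilon_\chi = 1$: namely $\varepsilon_\chi \to 0$, $\chi\varepsilon_\chi \to \infty$, and $\chi\varepsilon_\chi^p \to 0$ for every $p > 1$ (multiply the identity by $\varepsilon_\chi^{p-1}$ and let $\chi \to \infty$). From $\chi\varepsilon_\chi \to \infty$ one gets $a/\chi = o(\varepsilon_\chi)$, hence $2b_\chi = \varepsilon_\chi + a/\chi \sim \varepsilon_\chi$; and $2\chi b_\chi = \chi\varepsilon_\chi + a \to \infty$ together with $b_\chi \to 0$ yields $(1+b_\chi)^\chi \to \infty$, which was the point left open above.

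The main estimate is then a direct computation. Summing the geometric series,
\[\sum_{k=0}^\infty (1+b_\chi)^{-2k-2\chi} = \frac{(1+b_\chi)^{-2\chi}}{1-(1+b_\chi)^{-2}},\]
I would expand the denominator as $1-(1+b_\chi)^{-2} = 2b_\chi - 3b_\chi^2 + \cdots \sim 2b_\chi \sim \varepsilon_\chi$ and the numerator as $(1+b_\chi)^{-2\chi} = \exp\bigl(-2\chi b_\chi + \chi b_\chi^2 + O(\chi b_\chi^3)\bigr)$; since $2\chi b_\chi = \chi\varepsilon_\chi + a$ and $e^{-\chi\varepsilon_\chi} = \varepsilon_\chi$, this equals $\varepsilon_\chi e^{-a}(1+o(1))$ once the correction terms $\chi b_\chi^2, \chi b_\chi^3, \dots$ are shown to tend to $0$. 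Combining the two asymptotics gives $\sum_{k=0}^\infty (1+b_\chi)^{-2k-2\chi} \sim \varepsilon_\chi e^{-a}/\varepsilon_\chi = e^{-a}$, so the logarithm of the cumulative distribution function tends to $-e^{-a}$ and the proposition follows.

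The only genuine obstacle is the control of the correction $\chi b_\chi^2$ (the higher terms being strictly smaller): since $b_\chi \sim \varepsilon_\chi/2$ one has $\chi b_\chi^2 \sim \tfrac14\chi\varepsilon_\chi^2 = -\tfrac14\varepsilon_\chi\log\varepsilon_\chi \to 0$, which is exactly the $p=2$ instance of $\chi\varepsilon_\chi^p \to 0$. Thus the whole argument hinges on the defining equation of $\varepsilon_\chi$ forcing just the right rate of decay: fast enough that $\chi\varepsilon_\chi^2 \to 0$ so all Taylor corrections vanish, yet slow enough that $\chi\varepsilon_\chi \to \infty$ so that $2b_\chi \sim \varepsilon_\chi$ and $(1+b_\chi)^\chi \to \infty$. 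Everything else is routine bookkeeping with geometric series and logarithmic expansions.
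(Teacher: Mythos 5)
Your proposal is correct and follows essentially the same route as the paper: the same choice $2b_\chi = \varepsilon_\chi + a/\chi$, the same passage to logarithms via $\log(1-x)\sim -x$, the same geometric-series summation, and the same use of $e^{-\chi\varepsilon_\chi}=\varepsilon_\chi$ together with $\chi\varepsilon_\chi^2\to 0$ to kill the Taylor corrections. You merely make explicit the bookkeeping (the properties of $\varepsilon_\chi$ and the control of $\chi b_\chi^2$) that the paper leaves implicit by referring to the proof of Theorem \ref{theo:Strongly}.
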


Similarly to Theorem \ref{th:Strongly}
and Theorem \ref{th:WeaklyKappa}, 
the proposition still holds if 
we change $\varepsilon_\chi$ to $\varepsilon_\chi = \chi^{-1}
\left(\log \chi - \log \log \chi \right)$.
Nevertheless, we preferred to use the defining relation
$e^{\chi \varepsilon}\varepsilon = 1$
due to its importance in the proof.

 \noindent
\small{ 
 \textit{E-mail address}: \texttt{david.garcia-zelada@univ-amu.fr}}

\end{document}